\documentclass[12pt]{amsart}
\usepackage{amsmath, amsthm, amssymb, latexsym, url}
\usepackage{amsfonts}
\usepackage{graphicx,subfig}
\usepackage{stmaryrd} % for \llbracket 
\usepackage{hyperref} % for internal links

%To prevent awkward displays
\allowdisplaybreaks

%New margins to widen a bit
\addtolength{\oddsidemargin}{-.5in}
\addtolength{\evensidemargin}{-.5in}
\addtolength{\textwidth}{1in}

\author{J. Vandehey}
\thanks{Email: \href{mailto:vandehe2@illinois.edu}{\nolinkurl{vandehe2@illinois.edu}}}
\title{Error term improvements for van der Corput transforms.}
\date{\today}
\keywords{Asymptotic analysis, exponential sum, trigonometric sum}
\subjclass[2010]{Primary: 11L03, 11L07}

\newtheorem{thm}{Theorem}[section]
\newtheorem{cor}[thm]{Corollary}
\newtheorem{lem}[thm]{Lemma}
\newtheorem{prop}[thm]{Proposition}
\newtheorem{rem}[thm]{Remark}

\begin{document}

\maketitle

\begin{abstract}
We improve the error term in the van der Corput transform for exponential sums
\[
\sideset{}{^*}\sum_{a \le n \le b} g(n) exp(2\pi i f(n)).
\]
For many functions $g$ and $f$, we can extract the next term in the asymptotic, showing that previous results, such as those of Karatsuba and Korolev, are sharp.  Of particular note, the methods of this paper avoid the use of the truncated Poisson formula, and thus can be applied to much longer intervals $[a,b]$ with far better results.  We provide a detailed analysis of the error term in the case $g(x)=1$ and $f(x)=(x/3)^{3/2}$.
\end{abstract}

\section{Introduction}

We are interested here in estimating the error term $\Delta$ associated with the van der Corput transform,
\begin{equation}\label{eq:vdCtransform}
\sideset{}{^*}\sum_{a\le n \le b} g(n) e(f(n)) = \sideset{}{^*}\sum_{f'(a)\le r \le f'(b)}  \frac{g(x_r)e(f(x_r)-rx_r+\frac{1}{8})}{\sqrt{f''(x_r)}}+\Delta,
\end{equation}
where $f$ and $g$ are several times continuously differentiable functions with $f''(x)>0$ for $x\in [a,b]$ and where $x_r$ is defined by $f'(x_r)=r$.  A starred sum indicates that if a limit of summation is an integer, the corresponding summand is multiplied by $1/2$.  The function $e(x)$ denotes $e^{2 \pi i x}$.

The van der Corput transform is best-known in number theory for being the crucial element of Process B in the theory of exponent pairs and is sometimes simply referred to as ``Process B.''  As a method of estimating exponential sums, the van der Corput transform is often presented alongside other methods of Weyl, van der Corput, and Vinogradov.  Direct application of the van der Corput transform can take a complicated sum to one more amenable to estimation techniques or it can reduce the number of terms and therefore make computational estimations easier.  The van der Corput transform itself is involutive---applying it to the right-hand side of \eqref{eq:vdCtransform} will simply return the sum on the left-hand side of \eqref{eq:vdCtransform}---so one gains nothing by applying it twice in a row; but one could alternate applications of the van der Corput transform with other techniques (such as the Process A of the theory of exponent pairs) to achieve better results.  This alternation method is still a fruitful ground for modern research.  Recently, Cellarosi \cite{cellarosi} attained interesting new results in the classical case of $g(x)=1$ and $f(x)=\alpha x^2$, where the alternating technique employed is simply reducing $\alpha$ modulo $1$; Nakai \cite{nakai1,nakai2,nakai3} has investigated the possibility of using an analogous method when $f(x)$ is cubic or quartic.  The van der Corput transform has also seen recent use in physical applications, including quantum optics and wave processes (see \cite{karatsubakorolev} and the papers cited there for more details).

Van der Corput \cite{vdC2} originally showed that, given 
\[
 |f''(x)| \asymp \lambda_2, \qquad |f^{(3)}(x)| \ll \lambda_3, \qquad g(x)=1, \qquad \text{for } x\in [a,b],
\]
the error term can be bounded like 
\[
 \Delta = O(\lambda_2^{-1/2}) + O(\log(f'(b)-f'(a)+2))+O((b-a)\lambda_2^{1/5}\lambda_3^{1/5}).
\]
(Here we use the Landau and Vinodradov asymptotic notations which will be defined explicitly in Section \ref{section:notation}.)  Phillips \cite{phillips} improved this error under the additional assumptions
\[
|f^{(4)}(x)| \ll \lambda_4 \qquad \text{for } x\in [a,b] \qquad  \text{and} \qquad \lambda_3^2=\lambda_2 \lambda_4;
\]
in this case, we can replace $O((b-a)\lambda_2^{1/5}\lambda_3^{1/5})$ with $O((b-a)\lambda_3^{1/3})$.

The form of the error term found in most modern texts on analytic number theory \cite{grakol,iwakow,karatsuba,montgomery}\footnote{Curiously, \cite{sandor} skips this form of the error term entirely.} has its roots in the work of Kolesnik \cite{kolesnik} and Heath-Brown \cite{heathbrown}, although the results of the latter authors required analyticity of the function $f$, an assumption which has since been circumvented.  This moderately-difficult-to-prove form of the error term suffices for many basic application of the van der Corput transform.  We present this modern bound on the error in the notation used by Huxley.

\begin{thm}\label{thm:textbook}(Lemma 5.5.3 in \cite{huxley1})

Suppose that $f(x)$ is real and four times continuously differentiable on $[a,b]$.  Suppose that there are positive parameters $M$ and $T$, with $M\ge b-a$, such that, for $x\in [a,b]$, we have
\[
f''(x) \asymp T/M^2,\qquad f^{(3)}(x) \ll T/M^3, \qquad\text{and}\qquad f^{(4)}(x) \ll T/M^4.
\]
Let $g(x)$ be a real function of bounded variation $V$ on the closed interval $[a,b]$.  Then
\begin{align*}
\sum_{a\le n \le b} g(n)e(f(n)) &=\sum_{f'(a)\le r \le f'(b)}  \frac{g(x_r)e(f(x_r)-rx_r+\frac{1}{8})}{\sqrt{f''(x_r)}}\\
&\qquad + O\left( (V+|g(a)|)\left(\frac{M}{\sqrt{T}} +\log(f'(b)-f'(a)+2)\right)\right),
\end{align*}
where $x_r$ is the unique solution in $[a,b]$ to $f'(x_r)=r$.  The implicit constant in the big-O term depends on the implicit constants in the relations between $T$, $M$, and the derivatives of $f(x)$.
\end{thm}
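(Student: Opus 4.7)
The plan is to follow the classical Poisson-summation/stationary-phase template, keeping careful track of which pieces of the argument produce the $M/\sqrt{T}$ term and which produce the logarithmic term.

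First, I would convert the exponential sum into a sum of integrals by applying Poisson summation (or, equivalently, Abel summation against the sawtooth $\{x\}$) to obtain a representation of the form
\[
\sideset{}{^*}\sum_{a\le n\le b} g(n) e(f(n)) = \sum_{\nu \in \mathbb{Z}} \int_a^b g(x) e\bigl(f(x) - \nu x\bigr)\, dx + \text{(boundary correction)},
\]
where the starred convention at integer endpoints is absorbed into the correction. Because $g$ is merely of bounded variation, I would interpret these integrals in the Riemann--Stieltjes sense and use the formula $\int_a^b g\, d\Phi = [g\Phi]_a^b - \int_a^b \Phi\, dg$ whenever an antiderivative of the oscillatory factor is to hand, so that every subsequent estimate naturally carries a factor of $V + |g(a)|$.

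Next, for each $\nu$ with $f'(a) \le \nu \le f'(b)$ the phase $f(x) - \nu x$ has a unique stationary point $x_\nu$, and the hypotheses $f''\asymp T/M^2$, $f^{(3)} \ll T/M^3$, $f^{(4)} \ll T/M^4$ are exactly those required for a clean saddle-point evaluation. I would apply a stationary-phase lemma in the spirit of Huxley to extract the main term $g(x_\nu) e(f(x_\nu)-\nu x_\nu+1/8)/\sqrt{f''(x_\nu)}$, with an error given by the tail of the asymptotic expansion. For $\nu$ outside $[f'(a), f'(b)]$, or more generally on the non-stationary portions of each integral, I would integrate by parts against the oscillatory factor and invoke the second-derivative test, obtaining a bound of order $(V + |g(a)|)/|\nu - f'(\text{nearest endpoint})|$; summed over $\nu$ in an interval of length $\asymp T/M$, this yields the $\log(f'(b) - f'(a)+2)$ contribution.

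The main obstacle is the boundary analysis. For $\nu$ within $O(\sqrt{T}/M)$ of $f'(a)$ or $f'(b)$, the saddle point $x_\nu$ lies within $O(M/\sqrt{T})$ of an endpoint of $[a,b]$, and both the saddle-point asymptotic and the non-stationary integration-by-parts bound degenerate. Recovering the clean error term $(V + |g(a)|) M/\sqrt{T}$, rather than a worse accumulated error, requires matching the two regimes via a truncated second-derivative test, for instance through a dyadic decomposition in the distance from $x_\nu$ to the endpoint. This boundary contribution is precisely the source of the $M/\sqrt{T}$ summand, and it is also where the $|g(a)|$ in the factor $V+|g(a)|$ enters, via the boundary term $[g\Phi]_a^b$ of the Riemann--Stieltjes integration by parts.
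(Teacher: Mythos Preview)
The paper does not prove this theorem. Theorem~\ref{thm:textbook} is stated in the introduction as a background result quoted from Huxley's book (Lemma~5.5.3 in \cite{huxley1}); the paper cites it only to contrast it with the sharper results developed later. So there is no ``paper's own proof'' to compare your proposal against.

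That said, your outline is a reasonable sketch of the classical argument one finds in the references the paper cites (Huxley, Graham--Kolesnik, Iwaniec--Kowalski). One point worth flagging: in the standard proofs the logarithmic term does not arise from summing integration-by-parts bounds over all $\nu$ outside $[f'(a),f'(b)]$, as you suggest; rather one uses the \emph{truncated Poisson formula} (the paper quotes it as Proposition~8.7 of \cite{iwakow}) to discard all but the $\nu$ in a slightly enlarged window $[\alpha-\epsilon,\beta+\epsilon]$ up front, and the $\log(f'(b)-f'(a)+2)$ is the cost of that truncation. The paper makes exactly this point in Section~\ref{section:heuristics}, and its own main theorems are built around avoiding the truncated Poisson formula altogether. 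Your alternative---summing the $1/|\nu - f'(\text{endpoint})|$ tails directly---would also produce a logarithm, but only over the range $|\nu - f'(\cdot)| \ll T/M$; beyond that you would still need something like the truncation or a second integration by parts to get absolute convergence, which you have not addressed.
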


The error term $M/\sqrt{T}$ here corresponds to the $\lambda_2^{-1/2}$ term in the estimates of van der Corput and Phillips.

Unfortunately, for many interesting cases, the above error is insufficient.  As Huxley \cite[p.~475]{huxley1} notes, when applying the van der Corput transform to a multi-dimensional exponential sum, ``...the error terms and the truncation error in the Poisson summation formula may add up to more than the estimate for the reflected sum.''  Thus, finer error terms, useful for a broad spectrum of problems including computation and physical applications, have been given by various people, including Kolesnik \cite{kolesnik2}, Liu \cite{liu}, Redouaby and Sargos \cite{redsar2}, and Karatsuba and Korolev \cite{karatsubakorolev}.  Liu extends an (unfortunately obscure) earlier work of Min \cite{min}, removing the latter's condition that $f(x)$ be an algebraic function.  Redouaby and Sargos show that the conditions on $f^{(4)}(x)$ and $g''(x)$ can be removed without greatly increasing the bound on the error term.  The work of Karatsuba and Korolev is unique among all papers cited here, in that it is the only to give the implicit constants in the big-O terms explicitly, making it superior for most computational work.  

Outside of Redouaby and Sargos' result, we cannot briefly state any of these other forms of the error term in full detail; we will, however provide the following inexplicit form of Karatsuba and Korolev's result as an example of the comparative strength of these errors compared with Theorem \ref{thm:textbook}.

\begin{thm}\label{thm:karatsubakorolev}

Suppose that $f(x)$ and $g(x)$ are real-valued functions with $f$ four times continuously differentiable and $g$ two times continuously differentiable on the interval $[a,b]$.  Suppose there are positive constants $M$, $T$, and $U$, with $M\asymp b-a$, such that, for $x\in[a,b]$,
\begin{align}
f''(x) &\asymp T/M^2, & |f^{(3)}(x)| &\ll T/M^3, &  |f^{(4)}(x)| &\ll T/M^4 \label{eq:fbound}\\
|g(x)| &\ll U, & |g'(x) | &\ll U/M, & |g''(x)| &\ll U/M^2. \label{eq:gbound}
\end{align}
Then,
\begin{align*}
\sideset{}{^*}\sum_{a\le n \le b} g(n)e(f(n)) &= \sideset{}{^*}\sum_{f'(a)\le r \le f'(b)} \frac{g(x_r)e(f(x_r)-rx_r+1/8)}{\sqrt{f''(x_r)}}+O(U(T(a)+T(b)))\\
&\qquad + O\left(U\left(  \log (f'(b)-f'(a)+2) +\frac{M}{T} +\frac{T}{M^2}+1 \right)\right),
\end{align*}
where $x_r$ is the unique solution to $f'(x_r)=r$ in the interval $[a,b]$ and
\begin{equation}\label{eq:Tdef}
 T(\mu) = \begin{cases}
           0, & \|f'(\mu)\|=0\\
           \min \left\{ \dfrac{M}{\sqrt{T}}, \dfrac{1}{\|f'(\mu)\|} \right\}, & \|f'(\mu)\| \neq 0
          \end{cases}.
\end{equation}
The size of the implicit constant in the big-O term depends on the implicit constants in the relations of $M$, $T$, $U$, and the derivatives of $f(x)$ and $g(x)$.

If, in addition, $M \ll T \ll M^2$ and $M\ge b-a $, we may remove the terms $O(T/M^2+1)$.\footnote{This supplementary result comes from Karatsuba and Voronin \cite{karatsubavoronin}.}
\end{thm}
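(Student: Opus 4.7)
The plan is to follow the classical stationary-phase route via truncated Poisson summation, with careful bookkeeping of all quantitative error contributions. Because $f''>0$, the derivative $f'$ is strictly monotone on $[a,b]$, so whenever $r\in[f'(a),f'(b)]$ the stationary point $x_r$ defined by $f'(x_r)=r$ exists and is unique.

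First I would apply a truncated Poisson summation formula to the weighted starred sum, producing
\[
\sideset{}{^*}\sum_{a\le n\le b} g(n)e(f(n)) \;=\; \sum_{R_1\le r\le R_2}\int_a^b g(x)\,e(f(x)-rx)\,dx \;+\; \mathcal{E}_{\mathrm{trunc}},
\]
with cutoffs $R_1,R_2$ chosen just outside $[f'(a),f'(b)]$ at distance of order $T/M$. The truncation error $\mathcal{E}_{\mathrm{trunc}}$ is controlled by repeated integration by parts in those integrals for which $|r|$ lies beyond the cutoffs; there the derivative of the phase $f'(x)-r$ is uniformly far from zero and each integration by parts gains a factor of $1/|f'(x)-r|$. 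After summing the resulting geometric tails and optimising in the truncation radius, this step is responsible for the $O(U\log(f'(b)-f'(a)+2))$ contribution together with the boundary discrepancies $O(U(T(a)+T(b)))$: the function $T(\mu)$ in \eqref{eq:Tdef} records precisely how far $f'(\mu)$ is from the nearest integer, which is what governs how large an endpoint integral can be before cancellation finally sets in.

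For each integer $r$ with $f'(a)<r<f'(b)$, I would then invoke a saddle-point lemma to expand
\[
\int_a^b g(x)\,e(f(x)-rx)\,dx \;=\; \frac{g(x_r)\,e(f(x_r)-rx_r+1/8)}{\sqrt{f''(x_r)}} \;+\; \varepsilon_r,
\]
which furnishes the main term on the right-hand side of the theorem. Under the hypotheses \eqref{eq:fbound} and \eqref{eq:gbound}, the error $\varepsilon_r$ splits into an interior piece controlled by the higher derivatives of $f$ and $g$, and endpoint pieces that decay with the distance of $r$ from $f'(a)$ and $f'(b)$. Summing the interior pieces over the $O(T/M)$ permitted values of $r$ produces the $O(UM/T)$ term, while summing the endpoint pieces together with the integration-by-parts bounds for the non-stationary integrals $r\in[R_1,R_2]\setminus[f'(a),f'(b)]$ yields the remaining contributions $O(UT/M^2)$ and $O(U)$. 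The supplementary hypothesis $M\ll T\ll M^2$ simply forces $T/M^2\ll 1$, which is what lets the last two terms be discarded in that regime.

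The main obstacle is establishing a saddle-point lemma whose error is uniform as the stationary point $x_r$ approaches either endpoint $a$ or $b$; this near-endpoint analysis is exactly what generates the functions $T(a),T(b)$ and forms the technical heart of the argument. The standard recipe is to Taylor-expand $f$ about $x_r$ to third order, change variable to $u=\sqrt{f''(x_r)}(x-x_r)$, extend the resulting Fresnel-type integral to the full real line while bounding the tails by integration by parts, and then carefully control the remainders introduced by the amplitude $g$ and by the neglected cubic and quartic Taylor terms (where the hypotheses on $f^{(3)},f^{(4)}$ and on $g',g''$ are used). Once a sufficiently uniform saddle-point lemma is in hand, the theorem follows by aggregating the three regimes (stationary interior $r$, near-endpoint $r$, and non-stationary $r$) whose contributions have been bounded separately.
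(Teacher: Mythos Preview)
Your outline is a faithful sketch of the classical Karatsuba--Korolev argument, but note that the paper does not actually prove this theorem: it is quoted from \cite{karatsubakorolev} (with the supplementary clause from \cite{karatsubavoronin}) purely as a benchmark for comparison with the paper's own results. So there is no ``paper's proof'' to compare against; your proposal is in line with the original source's approach.

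That said, it is worth contrasting your route with the methods the paper develops for its own main theorems (Theorems~\ref{thm:main}--\ref{thm:refinement}). You use the truncated Poisson formula, which is exactly what produces the $O(\log(f'(b)-f'(a)+2))$ term; the paper's central innovation is to \emph{avoid} truncated Poisson altogether. Instead it uses the full Poisson summation, applies integration by parts twice to the non-stationary integrals, and controls the resulting total-variation terms by an inverse Euler--Maclaurin device (Proposition~\ref{prop:inversesum}). It also uses the Redouaby--Sargos stationary-phase lemmas, which extract first-order endpoint contributions that cancel against those from the non-stationary pieces, allowing local rather than global derivative conditions. These differences are precisely what let the paper remove the logarithm and handle arbitrarily long intervals in Corollaries~\ref{cor:iwakowimprovement} and~\ref{thm:karatsubakorolevimprovement}. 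Your approach recovers the cited theorem but would not yield those improvements.
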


The $M/\sqrt{T}$ term again makes an appearance in this theorem.  It cannot, in fact, be completely removed.  If $b$ is not an integer, but $f'(b)$ is an integer, then changing $b$ to $b-\epsilon$ for some very small $\epsilon > 0$ will not change the value of the sum on the left-hand side of \eqref{eq:vdCtransform}, but removes a term of size $M/\sqrt{T}$ from the right-hand side.

Van der Corput's results in \cite{vdC} also deserve a mention here, as they are of an entirely different flavor from those listed above and because they are generally not very well-known.\footnote{This may be largely van der Corput's fault.  Robert Schmidt, in reviewing van der Corput's paper for Zentralblatt, remarked that because van der Corput provided no context for his results, neither in terms of past results or future goals, that the paper was unlikely to spark much interest, and indeed it has so far only been cited once elsewhere, in \cite{coutkaz1}.  Due to the desire to avoid repeating van der Corput's mistake, the complicated main theorem of this paper will not be presented in the introduction.}  Instead of having a coefficient of $g(x_r)/\sqrt{f''(x_r)}$ in the transformed sum, he has a more general set of coefficients.  In addition, his error term---which would require too many definitions to state succinctly here---bears no resemblance to any of the other error estimates cited or formulated in this paper.  

While van der Corput's results in \cite{vdC} are quite complicated to use, they are aesthetically pleasing.  As we remarked above, the van der Corput transform is involutive, but in all the other results given above as well as the main result of this paper, one can obtain very different error terms when one applies the transform to the right-hand side of \eqref{eq:vdCtransform} instead of the left, assuming that the conditions necessary to apply the results would even still hold.  In \cite{vdC}, van der Corput shows that the conditions are still satisfied and the error term unchanged regardless of which side one applies his transform to; his \emph{theorem} is involutive.

The van der Corput transform and its error has been studied in much more general settings than we go into here: of particular interest, Jutila \cite{jutila} considered sums of the form $\sum b(n) g(n) e(f(n))$ for certain multiplicative functions $b(n)$ (see also \cite[Ch.~20]{huxley1}), and Kr{\"a}tzel \cite{kratzel2} considered the van der Corput transform of a convergent infinite series.

One may ask what the best possible error term could be.  Given the frequent restriction in theorems on the van der Corput transform that $f''(x)\asymp \lambda_2$ (or, equivalently $f''(x) \asymp T/M^2$), it is not surprising that the case where $g$ is constant and $f$ is quadratic (that is, $f''(x)=\lambda_2$) is the most-commonly studied special case of the van der Corput transform and the one with the best error terms \cite{cellarosi,coutkaz1,fedklopp,mordell,wilton}.  Fedotov and Klopp \cite{fedklopp}\footnote{This paper contains a small error in line $(0.4)$ that helped to spark the author's investigation into the van der Corput transform.} have given the error term  in this case as an explicit integral.  But perhaps most amazing are the results of Coutsias and Kazarinoff \cite{coutkaz1}: they showed that for positive integers $n$, we have
\[
\left|\sideset{}{^*}\sum_{k=0}^N e\left(\omega\cdot \frac{k^2}{2} \right) - \frac{e(sgn(\omega)/8)}{\sqrt{|\omega|}}\sideset{}{^*}\sum_{k=0}^n e\left(-\frac{1}{\omega}\cdot \frac{k^2}{2} \right)  \right| \le C\left|N- \frac{n}{\omega} \right|
\]
for $0<|\omega| <1$, $N=\left\llbracket n/\omega\right\rrbracket $ is the nearest integer to $n/\omega$, and $1<C<3.14$ is a particular constant.  Not only is the error bounded but it shrinks to zero as $n/\omega$ nears an integer.

The Coutsias-Kazarinoff result suggests that the van der Corput transform should be very accurate; in particular, the van der Corput transform for nice enough functions $f$ and $g$ shouldn't have compounding error terms (such as the $\log(f'(b)-f'(a)+2)$) seen in all the other results mentioned above.

The main theorems of this paper (and Theorem \ref{thm:toinfinity}, especially) confirm this hypothesis, allowing the van der Corput transform to be applied on very long intervals with a much higher degree of accuracy than in previous results.

As a quick example of this, consider the following well-known transform which appears in Iwaniec and Kowalski's book \cite[p.~211]{iwakow} (a version of this transform also appears in \cite{liu}).
Given $X>0$, $N>0$, and $\alpha>1$, $\nu>1$, consider 
\begin{equation}\label{eq:iwakowexample}
 \sideset{}{^*}\sum_{N\le n \le \nu N} \left(\frac{\alpha}{n}\right)^{\frac{1}{2}}e\left( \frac{X}{\alpha} \left(\frac{n}{N}\right)^{\alpha}\right) = \sideset{}{^*}\sum_{M\le m \le \mu M} \left(\frac{\beta}{m}\right)^{\frac{1}{2}} e\left(\frac{1}{8}-\frac{X}{\beta} \left(\frac{m}{M}\right)^{\beta}\right) +\Delta,
\end{equation}
where $1/\alpha+1/\beta=1$, $\mu^{\beta}=\nu^{\alpha}$, and $MN=X$.  Using a form of Theorem \ref{thm:textbook}, Iwaniec and Kowalski\footnote{A typo in the book has the logarithms in opposite places.} give
\[
 \Delta = O(N^{-1/2}\log (M+2)+M^{-1/2}\log (N+2)),
\]
with an implicit constant dependent on $\alpha$ and $\nu$.

Using the results of this paper, we may improve this to the following.

\begin{cor}\label{cor:iwakowimprovement}
Provided $N\gg 1$ and $M\gg 1$, in equation \eqref{eq:iwakowexample}, we may take
\begin{equation}\label{eq:iwakowimprovement}
 \Delta=O(N^{-1/2}+M^{-1/2})
\end{equation}
with an implicit constant dependent \emph{only} on $\alpha$ and the implicit constants in the lower bounds on $N$ and $M$.
\end{cor}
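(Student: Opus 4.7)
The plan is to verify the hypotheses of the paper's main theorem for each side of \eqref{eq:iwakowexample}, apply it, and combine the two resulting estimates using the involutivity of the van der Corput transform. Because the main theorem removes the compounding $\log(f'(b)-f'(a)+2)$ factor of Theorem \ref{thm:karatsubakorolev}, what remains in the error is essentially the amplitude $U$ and the boundary saddle-point contributions, and these turn out to be of the sizes claimed.

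For the left-hand sum of \eqref{eq:iwakowexample}, take $f(x) = (X/\alpha)(x/N)^{\alpha}$ and $g(x) = (\alpha/x)^{1/2}$ on $[N, \nu N]$. Routine differentiation gives $f^{(k)}(x) = (\alpha-1)\cdots(\alpha-k+1)\,(X/N^k)(x/N)^{\alpha-k}$ for $k \ge 2$, and since $(x/N)^{\alpha-k} \asymp 1$ on $[N, \nu N]$ with constants depending only on $\alpha$ and $\nu$, we have $f''(x) \asymp X/N^2$ and $|f^{(k)}(x)| \ll X/N^k$ for $k = 3, 4$, together with $|g^{(k)}(x)| \ll N^{-1/2 - k}$ for $k = 0, 1, 2$. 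In the notation of Theorem \ref{thm:karatsubakorolev} (with that theorem's $M$ renamed $M_0$ to avoid collision with the $M$ of the corollary) the size parameters therefore satisfy $M_0 \asymp N$, $T \asymp X$, $U \asymp N^{-1/2}$. The symmetric computation, with $\beta$ in place of $\alpha$, applies to the right-hand sum and yields $M_0 \asymp M$, $T \asymp X$, $U \asymp M^{-1/2}$.

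Applying the main theorem on the left, the amplitude contribution is $O(U) = O(N^{-1/2})$, while the boundary saddle-point contributions (analogues of the $U(T(a)+T(b))$ term of Theorem \ref{thm:karatsubakorolev}) are bounded by $U \cdot M_0/\sqrt{T} \asymp N^{-1/2} \cdot N/\sqrt{MN} = M^{-1/2}$, using $MN = X$. Hence the error on the left is $O(N^{-1/2} + M^{-1/2})$; by involutivity, applying the main theorem on the right produces the same type of bound with the roles of $N$ and $M$ swapped, and the two resulting estimates for $\Delta$ agree up to sign. The principal obstacle is controlling the residual pieces of the main theorem (analogues of the $U(T/M_0^2 + 1)$ term of Theorem \ref{thm:karatsubakorolev}); the worst is $U \cdot T/M_0^2 \asymp M/N^{3/2}$, which is bounded by $M^{-1/2}$ only when $M \le N$. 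In the opposite regime $M > N$ the corresponding piece from the right-hand application is $N/M^{3/2} \le N^{-1/2}$, and involutivity lets us select whichever of the two estimates is smaller; in both regimes the conclusion $|\Delta| \ll N^{-1/2} + M^{-1/2}$ follows, with implicit constant depending only on $\alpha$ and on the implicit constants in the lower bounds $N, M \gg 1$.
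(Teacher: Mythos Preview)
Your argument has a genuine gap: it does not actually deliver an implicit constant independent of $\nu$, which is the entire point of the corollary. You set up global parameters $M_0\asymp N$, $T\asymp X$, $U\asymp N^{-1/2}$ in the style of Theorem~\ref{thm:karatsubakorolev} or Corollary~\ref{thm:karatsubakorolevimprovement}, and you yourself note that the asymptotic relations $f^{(k)}(x)\asymp X/N^k$ on $[N,\nu N]$ hold only ``with constants depending only on $\alpha$ and $\nu$.'' Those $\nu$-dependent constants propagate through every big-$O$ in Theorem~\ref{thm:karatsubakorolev} and Corollary~\ref{thm:karatsubakorolevimprovement}; the final claim that the bound depends only on $\alpha$ and the lower bounds on $N,M$ is therefore unjustified. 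The involutivity trick lets you choose which side to transform, but it does not erase the $\nu$-dependence already baked into the implicit constants on either side.

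The paper's proof avoids this by applying Theorem~\ref{thm:main} directly, not a Karatsuba--Korolev-style corollary. It verifies condition~$(M)$ with the \emph{local} functions $M(x)=\epsilon x$ and $U(x)=g(x)$, where $\epsilon$ depends only on $\alpha$; the associated constants $C_2,C_{2^-},C_4,D_0,D_1,D_2,\delta$ are then absolute. After reducing by symmetry to $N\le\sqrt{X}$ (so that $f''(x)\gg 1$ and $M(x)\gg 1$ on the whole interval), it bounds each of $\mathcal{D}$, $\Delta_1$, $\Delta_2$, $\Delta_3$ by $O(U(a)+U(b))=O(N^{-1/2})$, bounds the integral in $\Delta_4$ by $\int_N^\infty x^{-3/2}\,dx\ll N^{-1/2}$, and handles the $\mathcal{K}$ terms by computing $H,G,W_\pm,r_\pm$ explicitly as power functions. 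None of these steps sees $\nu$. Your outline skips condition~$(M)$ and the $\Delta_i$ bookkeeping entirely, and that bookkeeping is exactly where the $\nu$-independence is won.
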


Corollary \ref{cor:iwakowimprovement} allows us to take $\nu$ as large as we like (and hence our intervals as large as we like) without increasing the bound on $\Delta$.  This, as well as the loss of the logarithmic factor, are common to applications of the results in this paper.

More powerful results are possible.  In many cases, the main theorems of this paper also allow one to extract the next term in the asymptotic for the van der Corput transform.  Under the same general hypotheses of Theorems \ref{thm:textbook} and \ref{thm:karatsubakorolev}, we can improve the error terms to the following.

\begin{cor}\label{thm:karatsubakorolevimprovement}
Suppose that $f(x)$ and $g(x)$ are real-valued functions with $f$ four times continuously differentiable and $g$ two times continuously differentiable on the interval $[a,b]$.  Suppose there exist constants $M$, $T$, and $U$ satisfying $M=b-a \gg 1$, $T\gg 1$, and the bounds on lines \eqref{eq:fbound} and \eqref{eq:gbound}.

Then,
\begin{align*}
\sideset{}{^*}\sum_{a\le n \le b} g(n)e(f(n)) &= \sideset{}{^*}\sum_{f'(a)\le r \le f'(b)} \frac{g(x_r)e(f(x_r)-rx_r+1/8)}{\sqrt{f''(x_r)}}\\
&\qquad - \mathcal{T}(b)+\mathcal{T}(a)+O\left( \frac{U}{\sqrt{T}}\left(1+\frac{M}{T}\right) \right),
\end{align*}
where $\mathcal{T}(\mu)$ equals
\[
\begin{cases}
 \dfrac{g(\mu)f^{(3)}(\mu)e(f(\mu))}{6\pi i f''(\mu)^2} - \dfrac{g'(\mu)e(f(\mu))}{2\pi i f''(\mu)} & \|f'(\mu)\|=0 \\
O\left(\dfrac{UM}{\sqrt{T}}\right) & 0 < \|f'(\mu)\| \le \sqrt{f''(\mu)} \\
\begin{aligned}&g(\mu)e(f(\mu)+\llbracket f'(\mu) \rrbracket \mu)\left(-\dfrac{1}{2\pi i \langle f'(\mu)\rangle}+\psi(\mu,\langle f'(\mu)\rangle)\right)\\ &\quad +O\left( \dfrac{U}{M\|f'(x) \|^2}+\dfrac{UT}{M^2\|f'(x)\|^3} \right)\end{aligned}  & \|f'(\mu) \| \ge \sqrt{f''(\mu)} 
\end{cases}.
\]

Here, $\llbracket x\rrbracket$ represents the nearest integer\footnote{If $x=n+1/2$ for an integer $n$, then it doesn't matter if we let $\llbracket x\rrbracket$ equal $n$ or $n+1$ provided that we do so consistently.} to $x$; $\langle x \rangle$, the difference between $x$ and the nearest integer to $x$, namely $x-\llbracket x \rrbracket$; and $\| x\|$, the distance between $x$ and the nearest integer to $x$, so that $\|x \|= |\langle x \rangle |$.  The function $\psi(x, \epsilon)$ equals 
\[
-\frac{1}{2\pi i} \lim_{R\to \infty} \sum_{0<|r|< R} \frac{e(rx)}{r+\epsilon} \qquad \text{ for }|\epsilon|\le \frac{1}{2},
\]
which converges and is uniformly bounded for all real $x$.

The size of the implicit constant in the big-O term depends on the implicit constants in the relations of $M$, $T$, $U$, and the derivatives of $f(x)$ and $g(x)$.
\end{cor}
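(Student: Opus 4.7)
The plan is to derive this statement as a specialization of the paper's main theorem (promised in the introduction but not stated in this excerpt). That theorem should express $\Delta$ as a pair of explicit boundary contributions at $x=a$ and $x=b$ plus an intrinsic error of size $O\bigl((U/\sqrt{T})(1+M/T)\bigr)$. Granting this, the corollary reduces to identifying the boundary object $\mathcal{T}(\mu)$ at each endpoint $\mu\in\{a,b\}$. Since $\sqrt{f''(\mu)}\asymp \sqrt{T}/M$, the identification splits naturally according to the size of $\|f'(\mu)\|$ relative to $\sqrt{f''(\mu)}$, producing the three cases in the statement.

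When $\|f'(\mu)\|=0$, the integer $r=f'(\mu)$ is an index for which the saddle point $x_r$ sits exactly at the endpoint $\mu$. I would analyze the associated boundary integral $\int g(x)\,e(f(x)-rx)\,dx$ by expanding $f(x)-rx$ to third order and $g(x)$ to first order about $\mu$. The leading Fresnel integral already accounts for the $1/2$-weighted stationary-phase contribution that the starred sum on the right-hand side of \eqref{eq:vdCtransform} assigns to the endpoint; the next-order terms produced by the cubic correction in $f$ and by the linear correction in $g$ yield, after a short computation with moments of $\int_0^\infty x^k e(Tx^2/2)\,dx$, exactly the stated closed-form expression involving $f^{(3)}(\mu)/f''(\mu)^2$ and $g'(\mu)/f''(\mu)$.

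When $0<\|f'(\mu)\|\le\sqrt{f''(\mu)}$, the nearest integer $r=\llbracket f'(\mu)\rrbracket$ gives a saddle that is very close to but not at $\mu$, so no useful oscillatory cancellation is available; I would estimate the boundary contribution trivially by its effective support-length times the amplitude $U$, producing the stated $O(UM/\sqrt{T})$. When $\|f'(\mu)\|\ge\sqrt{f''(\mu)}$, integers $r$ stay a definite distance from $f'(\mu)$ throughout a neighborhood of $\mu$, so I would integrate by parts on each $\int g(x)\,e(f(x)-rx)\,dx$ and retain only the boundary term at $\mu$, obtaining
\[
\frac{g(\mu)\,e(f(\mu)-r\mu)}{2\pi i(f'(\mu)-r)}
\]
plus an error controlled by $f''(x)/(f'(x)-r)^3$ and the derivatives of $g$. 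The isolated index $r=\llbracket f'(\mu)\rrbracket$ contributes $-1/(2\pi i\langle f'(\mu)\rangle)$, while the remainder over $r\neq\llbracket f'(\mu)\rrbracket$ is precisely the symmetric principal-value series defining $\psi(\mu,\langle f'(\mu)\rangle)$, and the integration-by-parts remainders sum, via the parameter relations \eqref{eq:fbound}--\eqref{eq:gbound}, to the stated $O\bigl(U/(M\|f'(\mu)\|^2)+UT/(M^2\|f'(\mu)\|^3)\bigr)$.

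The main obstacle is the third case. Two delicate points must be controlled: first, the uniform boundedness and conditional convergence of the series defining $\psi(x,\epsilon)$, which follows from Dirichlet-kernel-style pairing of the $r$ and $-r$ terms in analogy with the conjugate Fourier series of the sawtooth and requires the symmetric truncation $|r|<R$ written in the statement; second, verifying that the boundary object extracted from the main theorem combines cleanly into the isolated $r=\llbracket f'(\mu)\rrbracket$ term plus $\psi$, with no extraneous pieces leaking into either $\mathcal{T}(\mu)$ or the global error. Once these are handled, assembling the contributions at $\mu=a$ and $\mu=b$ with the appropriate signs (reflecting the orientation of the integration-by-parts boundary evaluations) completes the derivation.
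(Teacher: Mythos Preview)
Your outline is broadly on the right track, but it diverges from the paper's argument in two places and leaves one genuine gap.

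\textbf{The middle case.} You propose to estimate the boundary contribution in the range $0<\|f'(\mu)\|\le\sqrt{f''(\mu)}$ ``trivially by its effective support-length times the amplitude''. The paper does \emph{not} do this. Instead it removes that range entirely by a reduction: if $\mu$ lies in this bad window, replace $\mu$ by the nearest point $\mu'$ inside $[a,b]$ with $\|f'(\mu')\|=\sqrt{f''(\mu')}$ or $\|f'(\mu')\|=0$. A mean-value computation shows $|\mu-\mu'|\ll M/\sqrt{T}$, so the resulting change in both sides of the transform is $O(UM/\sqrt{T})$. After this reduction, only the first and third cases survive, and the main theorem (in the simplified form of Theorem~\ref{thm:alternate4}) is applied with $M(x)=b-a$. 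Your direct estimate can be made to work via the second derivative test, but it is not how the paper proceeds.

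\textbf{What you do not address.} You assume the main theorem directly delivers a global error of size $O\bigl((U/\sqrt{T})(1+M/T)\bigr)$. It does not: it delivers a battery of terms $\Delta_1,\Delta_2,\Delta_3,\Delta_4$ (and the explicit $\mathcal{D}$), each of which must be bounded separately under the hypotheses \eqref{eq:fbound}--\eqref{eq:gbound}. The nontrivial one is $\Delta_3$, which involves an integral over $[\overline{a},b]$ (respectively $[a,\overline{b}]$) with cubic decay in $x-a$. To control this the paper uses a second mean-value argument: since $f'(\overline{a})$ is an integer, $\overline{a}-a=(f'(\overline{a})-f'(a))/f''(\xi)\gg \|f'(a)\|\,M^2/T$, which is precisely what makes the $\Delta_3$ integral collapse into
\[
\frac{U}{M\|f'(a)\|^2}+\frac{UT}{M^2\|f'(a)\|^3}+\frac{UM}{T^2}.
\]
This step is the source of the error terms appearing inside $\mathcal{T}(\mu)$ in the third case, and your sketch omits it.

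\textbf{The first and third cases.} Your description of these is essentially a re-derivation of what Theorem~\ref{thm:alternate4} already packages into $\mathcal{D}(\mu)+O(\Delta_1(\mu))$. In the paper, no new stationary-phase or integration-by-parts analysis is performed for the corollary; one simply reads off the explicit formulas for $\mathcal{D}$ and $\Delta_1$ supplied by Theorem~\ref{thm:alternate4} under the assumption $m_a=m_b=0$ (which the endpoint-shifting reduction above guarantees).
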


This in particular suggests that the size of $T(\mu)$ in Karatsuba and Korolev's result is optimal when $\|f'(\mu)\| \ge \sqrt{f''(\mu)}$.  When $\|f'(\mu)\|=0$---that is, when $f'(\mu)$ is an integer---the term $\mathcal{T}(\mu)$ can bemore simply bounded by $O(UM/T)$.

Corollary \ref{thm:karatsubakorolevimprovement} can, in certain cases, be used to improve the Kusmin-Landau inequality, a common result in the study of exponential sums.

\begin{thm}(The Kusmin-Landau inequality---Theorem 2.1 in \cite{grakol})\footnote{A short history of this theorem is given on page 20 of \cite{grakol}.}

 Suppose $f(x)$ is continuously differentiable and that $f'(x)$ is monotonic on an interval $[a,b]$.  Moreover suppose $\| f'(x) \|\ge \theta>0$ on $[a,b]$, where $\|f'(x)\|$ is the distance from $f'(x)$ to the nearest integer.  Then
\[
 \sum_{a\le n \le b} e(f(n)) \le \cot\left( \frac{\pi\theta}{2}\right).
\]
\end{thm}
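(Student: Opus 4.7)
The plan is to use a classical Abel-summation argument keyed to an explicit geometric identity. First I would perform two routine reductions: replace $[a,b]$ by its integer endpoints $A := \lceil a \rceil$, $B := \lfloor b \rfloor$, and then exploit the hypotheses that $f'$ is monotonic and stays at distance at least $\theta$ from $\mathbb{Z}$ throughout $[a,b]$. These together force $f'(x)$ to lie entirely in a single interval $(k + \theta, k + 1 - \theta)$ for some integer $k$; subtracting the linear term $kx$ from $f$ does not change $e(f(n))$, so I may assume $\theta \le f'(x) \le 1 - \theta$ throughout. Setting $\alpha_n := f(n+1) - f(n)$, the mean value theorem gives $\alpha_n \in [\theta, 1 - \theta]$, and monotonicity of $f'$ transfers to $\alpha_n$.

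Next I would use the identity
\[
e(f(n)) = \frac{e(f(n+1)) - e(f(n))}{e(\alpha_n) - 1} =: \bigl(e(f(n+1)) - e(f(n))\bigr)\phi_n
\]
and compute directly that $\phi_n = -\tfrac{1}{2} - \tfrac{i}{2}\cot(\pi\alpha_n)$. This is the geometric heart of the argument: every $\phi_n$ lies on the vertical line $\Re z = -\tfrac{1}{2}$, and because $\cot$ is monotonic on $(0,\pi)$ and $\alpha_n$ is monotonic in $n$, the imaginary parts of the $\phi_n$ themselves vary monotonically in $n$.

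With this in hand, applying Abel summation to $\sum_{n=A}^{B-1}(e(f(n+1)) - e(f(n)))\phi_n$ and adding back the leftover term $e(f(B))$ yields
\[
\sum_{n=A}^{B} e(f(n)) = e(f(B))(1 + \phi_{B-1}) - e(f(A))\phi_A - \sum_{n=A}^{B-2} e(f(n+1))(\phi_{n+1} - \phi_n).
\]
A direct computation gives $|1 + \phi_{B-1}| = 1/(2\sin(\pi\alpha_{B-1}))$ and $|\phi_A| = 1/(2\sin(\pi\alpha_A))$, both bounded by $1/(2\sin(\pi\theta))$, so the boundary contributions sum to at most $1/\sin(\pi\theta)$. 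For the remaining sum the collinearity of the $\phi_n$ is crucial: since they advance monotonically along a vertical segment, the triangle inequality is saturated and
\[
\sum_{n=A}^{B-2}|\phi_{n+1} - \phi_n| = |\phi_{B-1} - \phi_A| = \tfrac{1}{2}|\cot(\pi\alpha_{B-1}) - \cot(\pi\alpha_A)| \le \cot(\pi\theta).
\]
Combining the two bounds and simplifying via $1 + \cos(\pi\theta) = 2\cos^2(\pi\theta/2)$ and $\sin(\pi\theta) = 2\sin(\pi\theta/2)\cos(\pi\theta/2)$ gives the identity $1/\sin(\pi\theta) + \cot(\pi\theta) = \cot(\pi\theta/2)$, which is the claimed bound.

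The hard part is not any single computation but rather spotting the closed form $\phi_n = -\tfrac{1}{2} - \tfrac{i}{2}\cot(\pi\alpha_n)$; without the resulting collinearity observation, only the weaker bound $2\cot(\pi\theta)$ on $\sum|\phi_{n+1}-\phi_n|$ is available, which inflates the constant and misses the sharp value $\cot(\pi\theta/2)$.
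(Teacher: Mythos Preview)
The paper does not prove this statement; it is quoted verbatim as Theorem 2.1 from Graham--Kolesnik and used only as a point of comparison for Corollary \ref{thm:expk-l}. Your argument is correct and is in fact the classical proof (essentially the one in Graham--Kolesnik, going back to Mordell and Kusmin): the reduction to $\theta \le f'(x) \le 1-\theta$, the identity $\phi_n = -\tfrac12 - \tfrac{i}{2}\cot(\pi\alpha_n)$, Abel summation, the telescoping of $\sum|\phi_{n+1}-\phi_n|$ via monotonicity, and the closing identity $1/\sin(\pi\theta)+\cot(\pi\theta)=\cot(\pi\theta/2)$ are all standard steps.
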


\begin{cor}\label{thm:expk-l}

 Suppose $f$, $T$, and $M$ satisfy the conditions of Corollary \ref{thm:karatsubakorolevimprovement}. 

If $\theta = \min_{z\in[a,b]}\|f'(z)\|$ is positive, $\|f'(a)\|>\sqrt{f''(a)}$, and $\|f'(b) \|>\sqrt{f''(b)}$, then
\begin{align*}
 \sideset{}{^*}\sum_{a \le n \le b} e(f(n)) &= \frac{e(f(b))}{2\pi i \langle f'(b) \rangle} - \frac{e(f(a))}{2\pi i \langle f'(a) \rangle}\\
&\qquad + O\left( \frac{1}{M \theta^2}+ \frac{T}{M^2\theta^3}+\frac{1}{\sqrt{T}}\left(1+\frac{M}{T}\right) \right),
\end{align*}
where $\langle x\rangle$ represents the difference between $x$ and the nearest integer to $x$.
\end{cor}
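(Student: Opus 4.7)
The plan is to apply Corollary~\ref{thm:karatsubakorolevimprovement} directly to the sum $\sideset{}{^*}\sum_{a\le n\le b}e(f(n))$ with $g\equiv 1$ (hence $U=1$), and then simplify each surviving piece.

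The first and decisive step is to observe that the van der Corput dual sum on the right-hand side of Corollary~\ref{thm:karatsubakorolevimprovement} is \emph{empty}. Since $f''>0$ on $[a,b]$ the derivative $f'$ is strictly monotonic there, and the hypothesis $\|f'(x)\|\ge\theta>0$ prevents $f'$ from attaining any integer value on $[a,b]$, so by the intermediate value theorem no integer lies in the closed interval $[f'(a),f'(b)]$. The identity therefore collapses to
\[
\sideset{}{^*}\sum_{a\le n\le b}e(f(n))=-\mathcal{T}(b)+\mathcal{T}(a)+O\!\left(\frac{1}{\sqrt{T}}\!\left(1+\frac{M}{T}\right)\right).
\]

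Next, the hypotheses $\|f'(a)\|>\sqrt{f''(a)}$ and $\|f'(b)\|>\sqrt{f''(b)}$ place both endpoint terms into the third branch of the piecewise definition of $\mathcal{T}$. Using $\|f'(\mu)\|\ge\theta$ for $\mu\in\{a,b\}$, the explicit $O$-term inside $\mathcal{T}(\mu)$ is bounded by $\tfrac{1}{M\theta^2}+\tfrac{T}{M^2\theta^3}$, which accounts exactly for the corresponding pieces of the error stated in Corollary~\ref{thm:expk-l}.

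The main obstacle, and the bulk of the remaining work, is to show that the explicit portion $e(f(\mu)+\llbracket f'(\mu)\rrbracket\mu)\bigl(-\tfrac{1}{2\pi i\langle f'(\mu)\rangle}+\psi(\mu,\langle f'(\mu)\rangle)\bigr)$ of $\mathcal{T}(\mu)$ collapses to $-\tfrac{e(f(\mu))}{2\pi i\langle f'(\mu)\rangle}$ modulo the stated error. Two reductions are needed. The phase factor $e(\llbracket f'(\mu)\rrbracket\mu)$ reduces to $1$ when $\mu\in\mathbb Z$, which is the natural setting for a Kusmin--Landau-type comparison, and should reduce to that case in general by translating the endpoints at negligible cost. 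The $\psi$ contribution is the subtle point, since $\psi$ is merely \emph{a priori} uniformly $O(1)$: for integer $\mu$ the Mittag--Leffler expansion of $\pi\cot(\pi z)$ yields the cleaner identity $-\tfrac{1}{2\pi i\langle f'(\mu)\rangle}+\psi(\mu,\langle f'(\mu)\rangle)=-\tfrac{\cot(\pi\langle f'(\mu)\rangle)}{2i}$, and the desired simplification reduces to verifying that $|\cot(\pi x)-\tfrac{1}{\pi x}|=O(|x|)$ at $x=\langle f'(\mu)\rangle$, together with the endpoint lower bound $\|f'(\mu)\|\ge\sqrt{f''(\mu)}\asymp\sqrt{T}/M$, fits into the error budget $\tfrac{1}{M\theta^2}+\tfrac{T}{M^2\theta^3}$. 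Once this bookkeeping is verified at $\mu=a$ and $\mu=b$, the corollary follows by linearity.
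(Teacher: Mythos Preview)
Your core approach matches the paper's one-line justification (``follows immediately as a special case of Corollary~\ref{thm:karatsubakorolevimprovement}''): the dual sum is empty because $\|f'\|\ge\theta>0$ forces $[f'(a),f'(b)]$ to miss every integer, and both endpoints land in the third case of $\mathcal{T}$, whose built-in $O$-terms give the $\tfrac{1}{M\theta^2}+\tfrac{T}{M^2\theta^3}$ contribution.

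You go further than the paper in flagging the residual phase $e(\llbracket f'(\mu)\rrbracket\mu)$ and the $\psi$ term as a discrepancy between $\mathcal{T}(\mu)$ and the cleaner $\tfrac{e(f(\mu))}{2\pi i\langle f'(\mu)\rangle}$ appearing in the corollary; the paper does not address this at all. Your proposed resolution, however, does not close the gap. Shifting a non-integer endpoint to the nearest integer changes the starred sum by $O(1)$, which the stated error need not absorb. Even at integer $\mu$, your cotangent identity leaves a residual of size $O(\|f'(\mu)\|)$, and this is \emph{not} dominated by $\tfrac{1}{M\theta^2}+\tfrac{T}{M^2\theta^3}+\tfrac{1}{\sqrt T}(1+\tfrac{M}{T})$ in general: take $\theta\asymp 1$ and $M\asymp T^{1+\varepsilon}$ with $0<\varepsilon<\tfrac12$ (consistent with $T/M\ll 1$, which the empty dual sum forces), and every piece of the error is $o(1)$ as $T\to\infty$ while $\|f'(\mu)\|\asymp 1$. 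The honest reading is that the displayed identity in Corollary~\ref{thm:expk-l} records only the leading part of $-\mathcal{T}(b)+\mathcal{T}(a)$; the full endpoint contribution from Corollary~\ref{thm:karatsubakorolevimprovement}, including the $\psi$ term and the phase, is what the argument actually delivers.
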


Corollary \ref{thm:expk-l} follows immediately as a special case of Corollary \ref{thm:karatsubakorolevimprovement}.  Moreover, it suggests that the constant in the classical Kusmin-Landau inequality may not be optimal.  The function $\cot(\pi \theta/2)$ grows like $2/\pi\theta$ as $\theta$ approaches zero; however, Corollary \ref{thm:expk-l} suggests that the growth should be at worst like $1/\pi \theta$ as $\theta$ approaches zero.

While Corollaries \ref{thm:karatsubakorolevimprovement} and \ref{thm:expk-l} strengthen many previous results, they are constrained to apply to short intervals.  The results of this paper do not give similarly simple conditions and error terms when the size of the interval is allowed to grow arbitrarily large.  But, given a specific sum, we can show very great improvements as one endpoint of the interval tends towards infinity.  Consider the specific transform
\begin{equation}\label{eq:exampleerror}
 \sideset{}{^*}\sum_{1 \le n \le N} e\left( \left(\frac{n}{3}\right)^{3/2}\right) = \sideset{}{^*}\sum_{(1/12)^{1/2} \le r \le (N/12)^{1/2}} (24 r)^{1/2} \cdot e(-4r^3+1/8) + \Delta,
\end{equation}
where $N$ is an integer.  (We have kept the $(1/12)^{1/2}$ due to it's natural appearance in the application of the van der Corput transform.  It may be replaced by $1/2$ with no change in value on the right-hand side, however.)

If we apply the form of the error term from Theorem \ref{thm:textbook}, then we obtain the following result, which was included as an example in \cite{montgomery}.
\begin{cor}\label{cor:monterror}
 In line \eqref{eq:exampleerror}, we have
\[
 \Delta = O(N^{1/4}).
\]
\end{cor}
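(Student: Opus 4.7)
The plan is to apply Theorem \ref{thm:textbook} after a dyadic decomposition of $[1,N]$. With $f(x)=(x/3)^{3/2}$ and $g\equiv 1$, a direct computation gives $f'(x)=\sqrt{x/12}$, $f''(x)\asymp x^{-1/2}$, $|f'''(x)|\asymp x^{-3/2}$, and $|f^{(4)}(x)|\asymp x^{-5/2}$. Because $f''$ varies by a factor of $N^{1/2}$ across $[1,N]$, no single choice of parameters makes $f''(x)\asymp T/M^2$ hold uniformly on the whole interval; a subdivision is therefore forced.

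First I would partition $[1,N]$ into dyadic pieces $I_k=[2^k,2^{k+1}]$ for $0\le k<K:=\lfloor\log_2 N\rfloor$, together with a trailing interval $[2^K,N]$. On $I_k$ the derivative bounds become $f''\asymp 2^{-k/2}$, $|f'''|\ll 2^{-3k/2}$, $|f^{(4)}|\ll 2^{-5k/2}$, so the hypotheses of Theorem \ref{thm:textbook} are satisfied with $M=2^k$ and $T=2^{3k/2}$ and absolute implied constants. Since $g\equiv 1$ gives $V=0$ and $|g(a)|=1$, Theorem \ref{thm:textbook} applied to $I_k$ contributes an error
\[
O\!\left(\frac{M}{\sqrt{T}}+\log\!\bigl(f'(2^{k+1})-f'(2^k)+2\bigr)\right) = O\!\left(2^{k/4}\right)+O(k),
\]
where I used $f'(2^{k+1})-f'(2^k)\asymp 2^{k/2}$.

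Next I would glue the pieces. On the left-hand side, the starred-sum convention makes the two half-weights at each shared integer endpoint $n=2^k$ combine into the correct full weight in $\sideset{}{^*}\sum_{1\le n\le N}$. On the right-hand side, the ranges $[f'(2^k),f'(2^{k+1})]$ tile $[(1/12)^{1/2},(N/12)^{1/2}]$; if $f'(2^k)$ happens to be an integer the same halving argument produces the correct weight there, and otherwise the issue is absent. Summing the per-piece errors then yields
\[
\Delta \ll \sum_{k=0}^{K}\bigl(2^{k/4}+k\bigr) \ll 2^{K/4}+K^2 \ll N^{1/4},
\]
which is the claimed bound.

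The only real obstacle is bookkeeping the dyadic decomposition and verifying that the starred endpoints align on both sides of the transform; the actual estimation is a routine application of Theorem \ref{thm:textbook}, with the dominant contribution coming from the single piece closest to $x=N$, which explains the sharp exponent $1/4$.
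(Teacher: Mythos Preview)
Your proof is correct and follows precisely the approach the paper indicates: the paper does not prove Corollary~\ref{cor:monterror} explicitly but simply says it follows from Theorem~\ref{thm:textbook} (referencing Montgomery), and in the very next paragraph explicitly notes that for the sharper Corollary~\ref{cor:karakorerror} one must break $[1,N]$ into roughly dyadic intervals, each contributing an error of the expected size. Your dyadic decomposition with $M=2^k$, $T=2^{3k/2}$ is exactly what is needed to make $f''\asymp T/M^2$ hold on each piece, and your observation that the geometric sum $\sum_k 2^{k/4}$ is dominated by the last piece is the correct explanation for the exponent $1/4$.

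One small bookkeeping remark: Theorem~\ref{thm:textbook} as stated in the paper uses unstarred sums on both sides, so the starred-sum alignment you describe is not quite the mechanism; rather, the discrepancy between starred and unstarred sums at each shared endpoint $2^k$ (and at each shared $f'(2^k)$ on the transformed side, should it be an integer) is $O(1)$, contributing at most $O(\log N)$ overall, which is harmlessly absorbed into the $N^{1/4}$ bound.
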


If we apply Theorem \ref{thm:karatsubakorolev} from Karatsuba and Korolev instead, we obtain much finer results.
\begin{cor}\label{cor:karakorerror}
 In line \eqref{eq:exampleerror}, we have
\[
 \Delta =\begin{cases}
           O((\log N)^2) + O\left( \min\left\{ N^{1/4}, \dfrac{1}{\| (N/12)^{1/2}\|} \right\}\right) & \| (N/12)^{1/2} \| \neq 0\\
           O((\log N)^2) & \| (N/12)^{1/2} \| = 0
         \end{cases}
\]
where $\| x\|$ is the distance from $x$ to the nearest integer.
\end{cor}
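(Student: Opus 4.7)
The plan is to partition $[1,N]$ into $O(\log N)$ subintervals on which Theorem~\ref{thm:karatsubakorolev} applies with uniform implicit constants, choosing the partition points so that the interior $T(\mu)$ contributions remain bounded. Writing $f(x)=(x/3)^{3/2}$, $g(x)=1$, one verifies $f'(x)=(x/12)^{1/2}$, $f''(x)=x^{-1/2}/(4\sqrt{3})$, $|f^{(3)}(x)|\ll x^{-3/2}$, and $|f^{(4)}(x)|\ll x^{-5/2}$; hence on any interval $[A,2A]\subset[1,N]$ the hypotheses \eqref{eq:fbound}--\eqref{eq:gbound} of Theorem~\ref{thm:karatsubakorolev} are satisfied with $M\asymp A$, $T\asymp A^{3/2}$, $U=1$, giving $M/\sqrt{T}\asymp A^{1/4}$ and $M/T+T/M^2+1=O(1)$.

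Next I would construct the partition. Set $c_0=1$ and $c_K=N$ with $K\asymp\log_2 N$. For $k$ larger than some absolute constant $k_0$, the interval $[(2^k/12)^{1/2}-\tfrac{1}{2},(2^{k+1}/12)^{1/2}-\tfrac{1}{2}]$ has length $\asymp 2^{k/2}$ and therefore contains an integer $m_k$; take $c_k=12(m_k+\tfrac{1}{2})^2=3(2m_k+1)^2$, which is a positive integer lying in $[2^k,2^{k+1}]$ with $f'(c_k)=m_k+\tfrac{1}{2}$ and $\|f'(c_k)\|=\tfrac{1}{2}$. The initial stretch $[1,c_{k_0}]$ has bounded length and can be absorbed as a single piece contributing $O(1)$ to the total error. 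Applying Theorem~\ref{thm:karatsubakorolev} on each $[c_k,c_{k+1}]$ now gives
\[
\Delta_k=O\bigl(T(c_k)+T(c_{k+1})\bigr)+O\bigl(\log(f'(c_{k+1})-f'(c_k)+2)+1\bigr),
\]
with $T(c_k)=O(1)$ at every interior partition point via~\eqref{eq:Tdef}, $T(1)=O(1)$ because $f'(1)=1/\sqrt{12}$ is bounded away from $\mathbb{Z}$, $\log(f'(c_{k+1})-f'(c_k)+2)\asymp k$ since $f'(c_{k+1})-f'(c_k)=m_{k+1}-m_k\asymp 2^{k/2}$, and $T(N)=\min\{N^{1/4},1/\|(N/12)^{1/2}\|\}$ (or $0$ when the norm vanishes) on the last piece, where $M/\sqrt{T}\asymp N^{1/4}$.

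Because the starred sums on both sides of \eqref{eq:vdCtransform} add cleanly across the integer partition points $c_k$, we have $\Delta=\sum_{k=0}^{K-1}\Delta_k$, and summing the bounds above yields $\sum_{k}O(k+1)+O(K)+T(N)=O((\log N)^2)+O(T(N))$, which is exactly the two cases stated in the corollary. The main obstacle is the choice of partition: a naive dyadic splitting $c_k=2^k$ gives interior $T(c_k)\le M_k/\sqrt{T_k}=2^{k/4}$, which sums to $O(N^{1/4})$ and is too large to recover the $1/\|(N/12)^{1/2}\|$ refinement when that quantity is small; only the fixed endpoint $N$ should be allowed to contribute a term of that size. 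The trick of perturbing each $c_k$ so that $f'(c_k)$ lands on a half-integer is what trims each interior contribution to $O(1)$, and it is available precisely because $f'([2^k,2^{k+1}])$ has length $\asymp 2^{k/2}$, which is much greater than $1$ for all sufficiently large $k$.
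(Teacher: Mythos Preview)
Your argument is correct and follows essentially the same strategy the paper sketches: break $[1,N]$ into $O(\log N)$ dyadic pieces, apply Theorem~\ref{thm:karatsubakorolev} on each, and choose the interior cut points $c_k$ so that the $T(c_k)$ terms are negligible. The only difference is cosmetic: the paper takes $c_k=12m_k^2$ so that $f'(c_k)$ is an integer and $T(c_k)=0$, whereas you take $c_k=3(2m_k+1)^2$ so that $f'(c_k)$ is a half-integer and $T(c_k)=O(1)$; either choice controls the interior endpoints and leaves only $T(N)$ to supply the $\min\{N^{1/4},\|(N/12)^{1/2}\|^{-1}\}$ term.
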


In this case, the $(\log N)^2$ term comes from needing to break the sum on the left-hand side of \eqref{eq:exampleerror} into roughly diadic intervals, each of which contributes an error term of size around $\log N$.  One must also choose the endpoints of these intervals to be values which are $12$ times a square, in order to keep the error term $T(\mu)$ zero.

Using the results of this paper, we can show that the result of Karatsuba and Korolev is almost sharp, in that we will extract an explicit term of size $N^{1/4}$ when $N^{1/4}$ is smaller than $\|(N/12)^{1/2}\|^{-1}$ and extract an explicit term of size $\|(N/12)^{1/2}\|^{-1}$ when $\|(N/12)^{1/2}\|^{-1}$ is smaller than $N^{1/4}$.

\begin{cor}\label{thm:exampleerror}
In line \eqref{eq:exampleerror}, we have
\[
\Delta = \begin{cases}
c+ O\left(N^{-1/2}\right) & \|(N/12)^{1/2}\|=0\\
\begin{aligned}
& 2\psi(\sqrt{N/12})(3N)^{1/4}    e((N/3)^{3/2}+1/8)\\
&\qquad+O(N^{3/20})\\
&\qquad +O\left(N^{5/12}\| (N/12)^{1/2}\|^{2/3}\right)
\end{aligned}  & \begin{aligned}&0< \| (N/12)^{1/2} \|\\ &\quad \le  (12N)^{-1/4}\end{aligned}\\
\begin{aligned}&e\left(\left(N/3\right)^{3/2}\right)\times \\ &\quad \times\left(\dfrac{1}{2\pi i\langle (N/12)^{1/2}\rangle} - \psi(N,\langle (N/12)^{1/2}\rangle ) \right) \\
&\qquad +c+ O\left(\frac{1}{N^{1/2} \|(N/12) ^{1/2}\|^3}\right) 
\end{aligned}  & \| (N/12)^{1/2} \| > (12N)^{-1/4}
\end{cases},
\]
where $c$ is a particular constant\footnote{Rough Mathematica calculations with $N=120,000$ give $c \approx 0.168-0.320i$.} and $\psi(x)=\psi(x,0)$ is the sawtooth function.  The remaining functions are all as in Corollary \ref{thm:karatsubakorolevimprovement}.
\end{cor}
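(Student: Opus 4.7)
The plan is a dyadic decomposition of $[1,N]$ at breakpoints $A_k=12m_k^2$ with $m_k=2^k$; each $A_k$ is an integer with $f'(A_k)=m_k\in\mathbb{Z}$, placing case~1 of $\mathcal{T}$ in Corollary \ref{thm:karatsubakorolevimprovement} in force at every interior breakpoint. Using $g\equiv 1$ and the fact that $f(A_k)=8m_k^3$ is an integer, the case~1 formula evaluates to the exact value $\mathcal{T}(A_k)=-1/(6\pi i\,m_k)$. On each dyadic piece $[A_k,A_{k+1}]$ the parameters $M_k\asymp 4^k$, $T_k\asymp 8^k$, $U=1$ satisfy the hypotheses of the corollary (since $f^{(j)}(x)\asymp x^{1/2-j}$), with error $O(T_k^{-1/2}(1+M_k/T_k))=O(8^{-k/2})$, summable in $k$.

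Let $K$ be the largest index with $A_K\le N/2$, and apply Corollary \ref{thm:karatsubakorolevimprovement} to $[1,A_0]=[1,12]$, to each dyadic $[A_k,A_{k+1}]$ for $0\le k<K$, and to the long tail $[A_K,N]$ (on which $M\asymp N$, $T\asymp N^{3/2}$, giving error $O(N^{-3/4})$). Summing these identities: the transformed sums glue seamlessly, since their shared integer $r$-endpoints receive full weight under the starred-sum convention, producing precisely the transformed sum on the right-hand side of \eqref{eq:exampleerror}; the boundary $\mathcal{T}$-terms telescope to $\mathcal{T}(1)-\mathcal{T}(N)$; and the error terms form a convergent geometric series. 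Packaging the fixed bounded contributions---the $[1,12]$-piece, $\mathcal{T}(1)$, and the full (convergent) dyadic error series over all $k\ge 0$---into a single constant $c$, the partial sums (at finite $N$) differ from $c$ by $O(N^{-1/2})$.

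The remaining $-\mathcal{T}(N)$ distinguishes the three cases. In case~1 ($\|(N/12)^{1/2}\|=0$), $\mathcal{T}(N)$ is case~1 of $\mathcal{T}$, equal to $-1/(\pi i\sqrt{3N})=O(N^{-1/2})$, which is absorbed into the error to give $\Delta=c+O(N^{-1/2})$. In case~3 ($\|(N/12)^{1/2}\|>(12N)^{-1/4}$), $\mathcal{T}(N)$ is case~3, whose error term $O(\|(N/12)^{1/2}\|^{-3}N^{-1/2})$ dominates $O(N^{-3/4})$, and since $\llbracket f'(N)\rrbracket\cdot N\in\mathbb{Z}$ its exponential simplifies to $e((N/3)^{3/2})$, matching the explicit main term in the statement.

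The main obstacle is case~2, where case~2 of $\mathcal{T}$ supplies only the non-specific bound $O(N^{1/4})$ at $\mu=N$, but the statement requires the explicit main term $2\psi(\sqrt{N/12})(3N)^{1/4}e((N/3)^{3/2}+1/8)$. The natural plan is to truncate the dyadic decomposition at $A^*=12(m^*)^2$ where $m^*=\llbracket(N/12)^{1/2}\rrbracket$ (case~1 still applies at $A^*$, preserving the telescoping) and to evaluate the short residual sum $\sum_{A^*<n\le N}e(f(n))$, of length $N-A^*\asymp\sqrt N\,\|(N/12)^{1/2}\|\le N^{1/4}$, by hand. Taylor-expanding $f(A^*+k)$ around $A^*$ and discarding integer-phase contributions from $m^*\in\mathbb Z$ and $f(A^*)\in\mathbb Z$, the residual sum reduces to an incomplete quadratic Gauss sum $\sum e(\alpha k^2)$ (plus a cubic correction) with $\alpha=f''(A^*)/2\asymp N^{-1/2}$. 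A direct analysis combining Euler--Maclaurin (whose remainder gives the error $O(N^{5/12}\|(N/12)^{1/2}\|^{2/3})$) with Fresnel-type asymptotics of the resulting oscillatory integral (whose cubic-phase correction gives $O(N^{3/20})$) extracts the $\psi$-main term, once combined with the extra starred-weight boundary contribution at $r=m^*$ that accounts for the difference between the truncated and full transformed sums.
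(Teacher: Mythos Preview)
Your treatment of the first and third cases (where $\|(N/12)^{1/2}\|=0$ or $\|(N/12)^{1/2}\|>(12N)^{-1/4}$) is essentially correct and offers a genuine alternative to the paper. The paper handles these cases in one stroke by applying Theorem~\ref{thm:toinfinity} to the left-hand sum, which packages the convergent error contributions and the lower-endpoint terms into the constant $c$ automatically. Your dyadic decomposition at $A_k=12\cdot4^k$ together with Corollary~\ref{thm:karatsubakorolevimprovement} achieves the same thing by hand: the choice of breakpoints forces $\|f'(A_k)\|=0$ so the case-1 $\mathcal{T}$-terms telescope exactly, and the geometric error series converges. This is a perfectly good substitute for Theorem~\ref{thm:toinfinity} here, at the cost of redoing the bookkeeping that theorem was designed to absorb.

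The second case, however, has a real gap. Your sketch proposes to isolate the short residual sum near $A^*=12(m^*)^2$ and extract the main term $2\psi(\sqrt{N/12})(3N)^{1/4}e((N/3)^{3/2}+1/8)$ via ``Euler--Maclaurin combined with Fresnel-type asymptotics.'' But $\psi(\sqrt{N/12})$ is the sawtooth evaluated at the \emph{transformed} endpoint $b=\sqrt{N/12}$, not at any endpoint of the original sum (where $N\in\mathbb{Z}$ gives $\psi(N)=0$); nothing in a direct Euler--Maclaurin or Fresnel analysis of $\sum_{A^*<n\le N}e(f(n))$ will produce it. You also have not addressed the sign of $\langle(N/12)^{1/2}\rangle$: when it is negative, $A^*>N$ and the ``residual sum'' runs the wrong way. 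Finally, the paper explicitly warns (Section~\ref{section:fresnel}) that Fresnel asymptotics give no useful information in the regime $u\asymp 1$, which is precisely where your residual integral would sit when $\|(N/12)^{1/2}\|\asymp(12N)^{-1/4}$.

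The paper resolves this case by switching sides: it applies Theorem~\ref{thm:refinement} to $e(-1/8)$ times the conjugate of the \emph{transformed} sum, with $f(x)=4x^3$, $g(x)=\sqrt{24x}$, $b=\sqrt{N/12}$. Since $N$ is an integer, $f'(b)=N$ is an integer, so $\epsilon'=0$ and the optimized form of Theorem~\ref{thm:refinement} applies directly, yielding $\mathcal{D}^\circ(b)=\psi(b)g(b)e(f(b)-f'(b)b)+O(\ldots)$ with exactly the claimed error exponents $N^{3/20}$ and $N^{5/12}\|(N/12)^{1/2}\|^{2/3}$. The $\psi(\sqrt{N/12})$ term thus arises for free from the structure of Theorem~\ref{thm:refinement}, not from any Fresnel computation. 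If you want to repair your argument, the cleanest route is to keep your dyadic decomposition for the other two cases but invoke Theorem~\ref{thm:refinement} on the transformed side for this one.
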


One way to understand these new error terms is via the geometry of the associated sums.  Given functions $f$ and $g$, one typically considers the curve $S(t):[0,\infty)\to \mathbb{R}^2$ given by
\[
 S(t) =\sum_{1\le n \le t} g(n)e(f(n)) + \{t\} g(\lfloor t\rfloor +1) e(f(\lfloor t \rfloor +1)),
\]
where $\lfloor x \rfloor$ represents the floor of a real number $x$ and $\{ x\}$ represents the fractional part of $x$.  Geometric aspects of these curves have been well-studied \cite{berrygold,dmf,desh,moore}.  

\begin{figure}[t! b]
 \centering
\subfloat{\includegraphics[width=7cm]{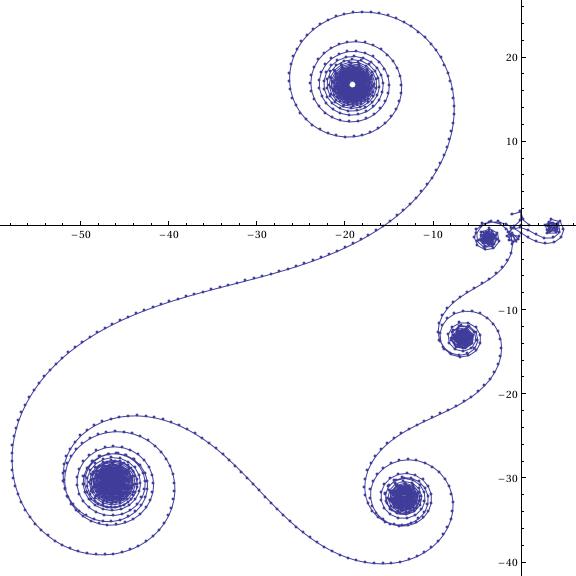}}\qquad 
\subfloat{\includegraphics[width=7cm]{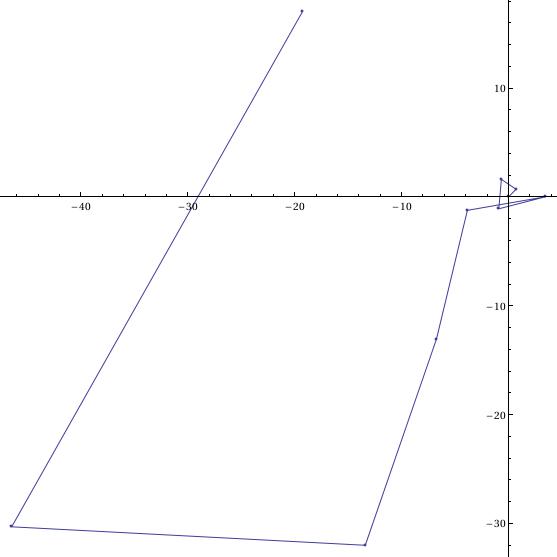}}
\caption{The curves associated to the exponential sum $\sum_{n\le N} e(n \log n)$ and it slightly extended van der Corput transform $\sum_{n\le \log(N)+1}\sqrt{e^{n-1}} \cdot e(-e^{n-1}+1/8)$ with $N=4000$.}
\label{fig:nlogn}
\end{figure}

Especially when $f''$ is small, these curves generate a series of spiral-like figures.  The van der Corput transform of the sum associated to $S(t)$ then generates a new curve that can be seen to connect the center point of successive spirals by straight lines like in Figure \ref{fig:nlogn}.  Thus the van der Corput transform can be seen as smoothing (if $f''$ is small) or roughening (if $f''$ is big) the curve.  An easily accessible explanation for why this occurs is given in \cite{coutkaz2}.

For our example case \eqref{eq:exampleerror}, the corresponding curve is displayed in Figure \ref{fig:exampleerror}.

\begin{figure}[t b]
 \centering
\subfloat{\includegraphics[width=7cm]{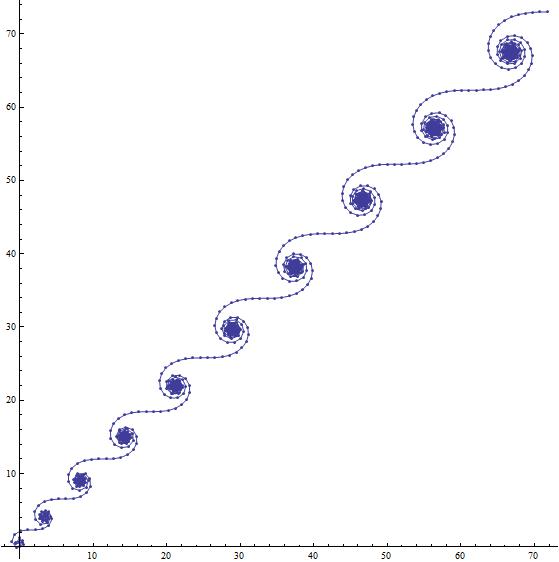}}\qquad
\subfloat{\includegraphics[width=7cm]{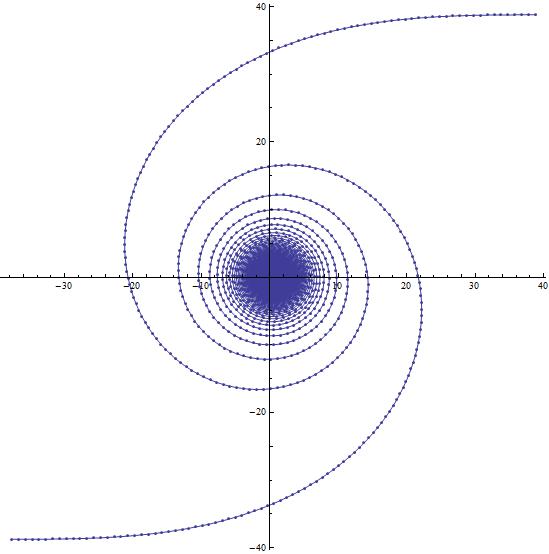}}
\caption{The curve associated to the exponential sums on the left-hand side of \eqref{eq:exampleerror} with $N=1,2,\dots,1200$, and the $500$th spiral of this curve---this and subsequent pictures are off-set closer to the origin.}
\label{fig:exampleerror}
\end{figure}

The term 
\begin{equation}\label{eq:examplecase1}
2\psi(\sqrt{N/12})(3N)^{1/4}    e((N/3)^{3/2}+1/8)
\end{equation}
from Corollary \ref{thm:exampleerror} describes the outer arm of the spirals and the connections between them.  Figure \ref{fig:outerarm} depicts the outer arm of the $500$th spiral and its approximation using Corollary \ref{thm:exampleerror}.  The reason why this approximation appears so poor is due, in part, to the number of points on this part of the spiral. The $m$th spiral has on the order of $m$ points in it.  The outer arm of the spiral, where
\[
\| (N/12)^{1/2} \|\le  (12N)^{-1/4},
\]
accounts for only about $\sqrt{m}$ points.  The big-O term $O\left(N^{5/12}\| (N/12)^{1/2}\|^{2/3}\right)$ is on the order of the explicit term \eqref{eq:examplecase1} when $\|(N/12)^{1/2} \| \asymp (12N)^{-1/4}$, so that \eqref{eq:examplecase1} is a good estimate for only $o(\sqrt{m})$ points, a negligible piece of the spiral.  However, from the picture, we can see that the approximation does a very good job of estimating the rotation around the spiral and appears to be of the correct order of magnitude for the distance to the center of the spiral.  Although we do not calculate the big-O constants explicitly, it is quite possible the big-O terms are smaller in modulus than \eqref{eq:examplecase1} for $\| (N/12)^{1/2} \|\le  (12N)^{-1/4}$.

\begin{figure}[t b]
 \centering
\subfloat{\includegraphics[width=7cm]{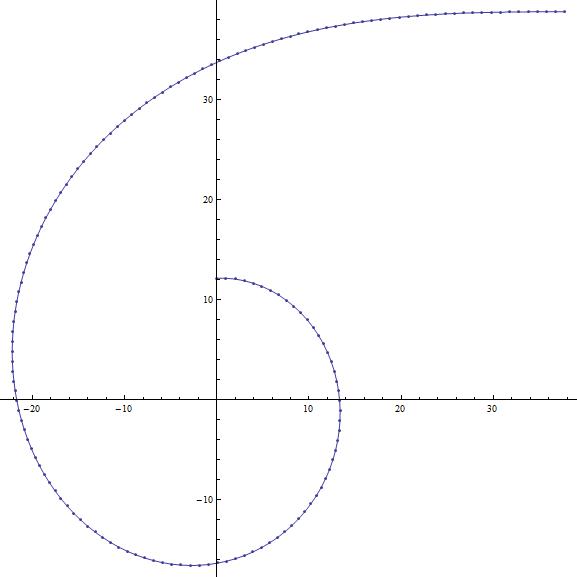}}\qquad
\subfloat{\includegraphics[width=7cm]{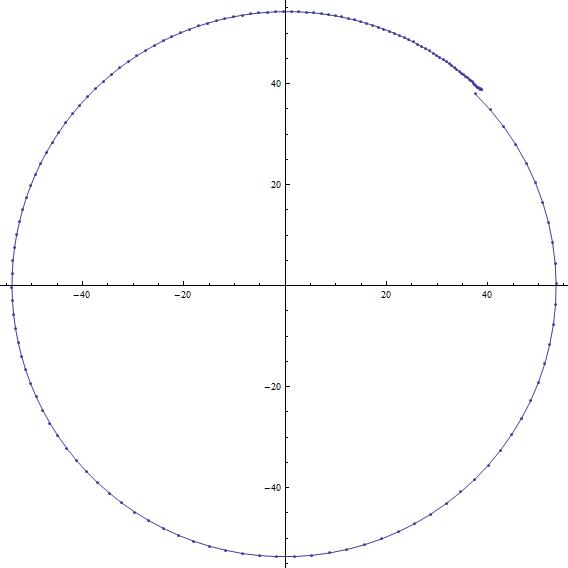}}
\caption{The outer arm of the 500th spiral of the curve associated to \eqref{eq:exampleerror} and its approximation by Corollary \ref{thm:exampleerror}.}
\label{fig:outerarm}
\end{figure}

It is possible to obtain far more explicit results along the outer spiral by making use of the Fresnel integral functions; however, this method does not provide good asymptotic data since it uses values of the Fresnel integral functions for which good asymptotics do not currently exist.  More details will be given in Section \ref{section:fresnel}.

The remaining points (in fact, almost all of the points) are described by the term 
\begin{equation}\label{eq:examplecase2}
e((N/3)^{3/2})\left(\dfrac{1}{2\pi i\langle (N/12)^{1/2}\rangle} - \psi(N,\langle (N/12)^{1/2}\rangle ) \right) ,
\end{equation}
which gives the tight inner weave of a given spiral.  The term $\psi(N,\langle (N/12)^{1/2}\rangle)$ can be bounded by $O(1)$ for all $N$.  The big-O term $ O(N^{-1/2} \|(N/12) ^{1/2}\|^{-3})$ is bigger than $O(1)$ for only $O(m^{2/3})$ of the points on the $m$th spiral, again a negligible piece.  Thus, we know almost all points on the inner weave (and hence almost all points on the full curve) to within a $O(1)$ error and often much better.

\begin{figure}[t b]
 \centering
\subfloat{\includegraphics[width=7cm]{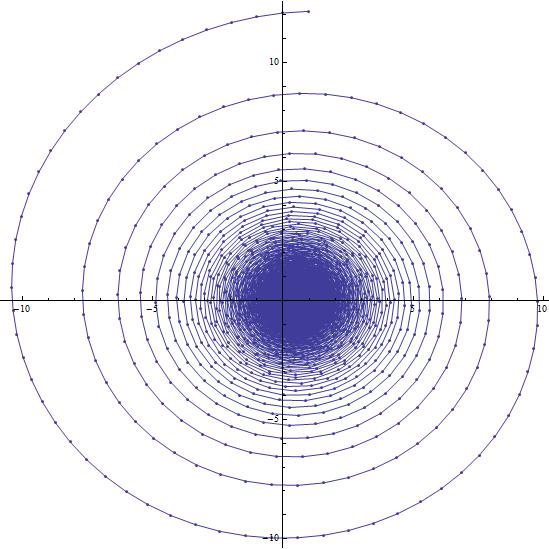}}\qquad
\subfloat{\includegraphics[width=7cm]{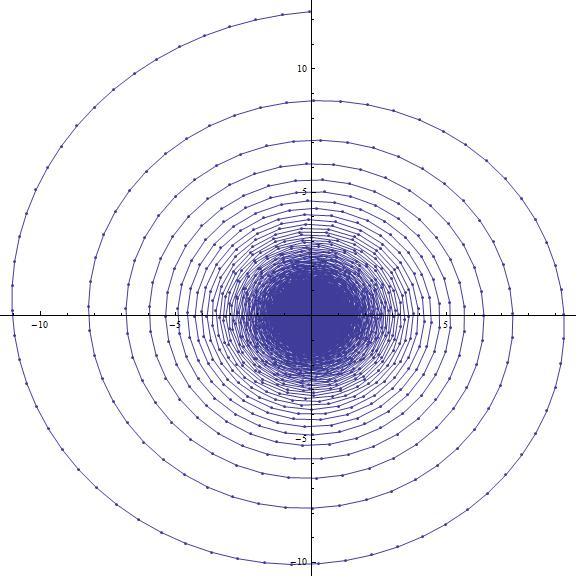}}
\caption{The inner weave of the 500th spiral of the curve associated to \eqref{eq:exampleerror} and its approximation by Corollary \ref{thm:exampleerror}, ignoring the contribution of the $\psi(x,\epsilon)$ term.}
\label{fig:innerspiral}
\end{figure}

The big-O terms grow to the size of the explicit terms \eqref{eq:examplecase1} and \eqref{eq:examplecase2} as $\|(N/12)^{1/2} \|$ approaches $(12N)^{-1/4}$ in both cases, so we don't have good information about how the curve transitions from the inner weave to the outer arm and vice-versa.  We also do not have a good heuristic explanation for why $e(1/8)$ appears in \eqref{eq:examplecase1} on the outer arm, but $1/i=e(-1/4)$ appears in \eqref{eq:examplecase2} for the inner weave.

This paper is arranged as follows.

In Section \ref{section:notation}, we will provide a short catalog of notations that are frequently used in this paper.

In Section \ref{section:heuristics}, we will outline the methods of this paper and how they overcome difficulties encountered in other results.

In Section \ref{section:results}, we will state the main theorems of this paper as well as a briefly discuss them and their assumptions; in this section, we also include a proof of Corollaries \ref{cor:iwakowimprovement} and \ref{thm:karatsubakorolevimprovement}.

In Section \ref{sec:lemstationaryphase}, we will cite a number of lemmas from other papers and books that will be necessary in the proof of the theorems, as well as give propositions that show how the lemmas apply under the specific conditions of the main theorems.

In the remaining sections, we prove the primary theorems of our paper and then prove Corollary \ref{thm:exampleerror}.

%%%%%%%%%%%%%%%%%%%%%%%%%%%%%%%%%%%%%%%%%
\section{Notation}\label{section:notation}
%%%%%%%%%%%%%%%%%%%%%%%%%%%%%%%%%%%%%%%%%

We will frequently use the Landau and Vinogradov asymptotic notations.  The big-O notation $f(x)=O(g(x))$ (equivalently, $f(x)\ll g(x)$) means that there exists some constant $c$ such that $|f(x)|\le c|g(x)|$ on the domain in question.  By $O(f(x))=O(g(x))$, we mean that a function which is asymptotically bounded by $f(x)$ will also be asymptotically bounded by $g(x)$.  The little-o notation $f(x)=o(g(x))$ as $x\to a$---typically with $a$ equal to $\infty$---means that
\[
\lim_{x\to a} \frac{f(x)}{g(x)}=0.
\]
\iffalse Constants in the subscript of big-O notation, such as $O_y(g(x))$, mean that the implicit constant $c$ may depend on the subscript constants (in this case, $y$). \fi
 By $f(x) \asymp g(x)$, we shall mean that $g(x) \ll f(x) \ll g(x)$.

We will need several functions that relate a real number $x$ to the integers nearby it.  Let $\lfloor x\rfloor$ denote the usual floor of a real number $x$, the largest integer less than or equal to $x$, and let $\lceil x\rceil$ denote the usual ceiling of a real number $x$, the smallest integer greater than or equal to $x$.  Let $\{x \}:=x-\lfloor x\rfloor$ denote the fractional part of a real number $x$.  Let $s(x):=\{x\}-1/2$ denote the sawtooth function, and let 
\[
\psi(x):= \begin{cases}
s(x) & x\not\in \mathbb{Z}\\
0 & x\in \mathbb{Z}
\end{cases}
\]
denote the ``smoothed'' sawtooth function, with Fourier series
\[
\psi(x)=-\frac{1}{\pi}\sum_{r=1}^\infty \frac{\sin(2\pi r x)}{r}.
\]
We also wish to have a modified sawtooth function, given by
\[
\psi(x,\epsilon) := -\frac{1}{2\pi i} \lim_{R\to \infty} \sum_{0<|r|< R} \frac{e(rx)}{r+\epsilon} \qquad \text{for }|\epsilon|\le \frac{1}{2}.
\]
The convergence of $\psi(x,\epsilon)$ will be guaranteed by Proposition \ref{prop:fouriertail}.

\begin{figure}[t! b]
 \centering
\subfloat{\includegraphics[width=7cm]{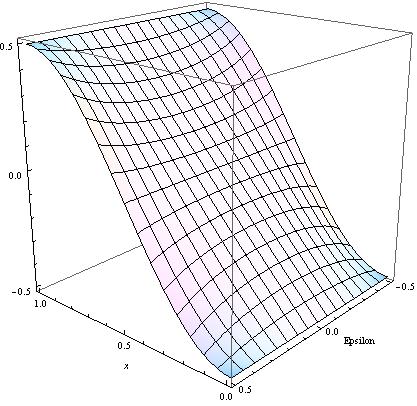}}\qquad 
\subfloat{\includegraphics[width=7cm]{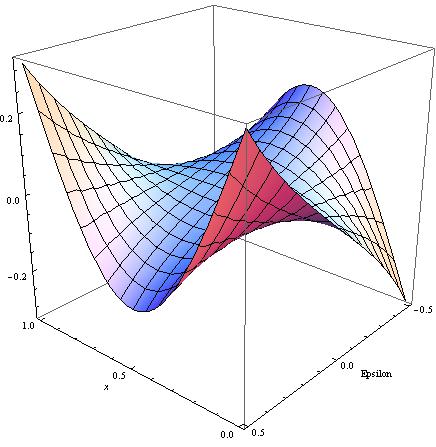}}
\caption{The real (left) and imaginary (right) parts of the function $\psi(x,\epsilon)$}
\end{figure}

Let $\langle x\rangle:=s(x+1/2)$ denote the difference between $x$ and the nearest integer to $x$, let $\left\llbracket x\right\rrbracket:=x-\langle x\rangle$ denote the nearest integer to $x$, and let $\| x\|:=\min\{1-\{x\},\{x\}\}$ denote the distance from $x$ to $\llbracket x \rrbracket$. Let $\|x\|^*$ be given by
\[
\|x\|^*= \begin{cases}
\|x\| & \|x\| \neq 0\\
1 & \|x \| = 0
\end{cases}
\]

A few more definitions will simplify the (nonetheless still complicated) statement of the theorem.  For visibility's sake, we will avoid writing $(x)$ all the time when the choice of argument is always the same.  
Given functions $f$ and $g$, let 
\begin{align*}
H&=gf^{(3)}+3g'f''\\
G&=12gg''(f'')^2\\ 
W_\pm &=\frac{(2g'')^2g'}{(H\pm \sqrt{H^2-G})^2} -\frac{(2g'')^3(f''g)}{(H\pm \sqrt{H^2-G})^3} \\
W_0 &= -\frac{H^2 f^{(3)}}{27g(f'')^5}\\
r_\pm &=f'-\frac{H\pm \sqrt{H^2-G}}{2g''}\\
r_0 &=f'-\frac{3g(f'')^2}{H},
\end{align*}
and given an integer $r$, let
\[
h_r(x) = \frac{(f'(x)-r)g'(x) - g(x)f''(x)}{(f'(x)-r)^3}.
\]

%%%%%%%%%%%%%%%%%%%%%%%%%%
\section{Heuristics for the van der Corput transform and the new ideas of this paper}\label{section:heuristics}
%%%%%%%%%%%%%%%%%%%%%%%%%%

The transform is a simple two-step process: the Poisson summation formula is first applied to obtain a sum of integrals, and then each integral is estimated using stationary phase methods.  In particular, the Poisson summation formula (see Lemma \ref{lem:poissonsummation}) gives
\begin{equation}\label{eq:poisson1}
\sideset{}{^*}\sum_{a\le n \le b}  g(n)e(f(n)) = \lim_{R\to \infty} \sum_{r=-R}^R \int_a^b g(x)e(f(x)-rx) \ dx.
\end{equation}

An oscillatory integral, like 
\begin{equation}\label{eq:oscint}
\int_a^b g(x)e(f(x))\ dx,
\end{equation}
 is said to have a stationary phase point if there exists an $x'\in[a,b]$ such that $f'(x')=0$; if such a point exists, then we expect \eqref{eq:oscint} to be roughly 
\[\frac{g(x')e(f(x')+1/8)}{\sqrt{f''(x')}} \cdot c(x'),\]
plus some small error, where $c(x')$ equals $1/2$ if $x'$ equals $a$ or $b$ and $1$ otherwise.  In our case, the stationary phase point of the integrand
\[
 g(x)e(f(x)-rx)
\]
occurs at $x=x_r$, which is defined by $f'(x_r)-r=0$.  Provided $f'(a)\le r \le f'(b)$, this stationary phase point will be inside the interval of integration, and so we expect
\[
\sideset{}{^*}\sum_{a\le n \le b} g(n) e(f(n)) \approx \sideset{}{^*}\sum_{f'(a)\le r \le f'(b)}  \frac{g(x_r)e(f(x_r)-rx_r+1/8)}{\sqrt{f''(x_r)}}.
\]

The error term for the van der Corput transform thus comes from the error in estimating all the integrals of \eqref{eq:poisson1}.

The first problem in doing such an estimation comes from the integrals without a stationary phase point.  Typically the method one uses to estimate such an integral is an application of integration by parts.
\[
 \int_a^b e(f(x)-rx)\ dx = \left. \frac{e(f(x)-rx)}{2\pi i(f'(x)-r)}\right]_a^b + \int_a^b \frac{f''(x)}{2\pi i(f'(x)-r)^2} e(f(x)-rx) \ dx
\]
(We will assume that $g(x)=1$ for a while to simplify the heuristics.)  The terms $e(f(x)-rx)/2\pi i(f'(x)-r)$ at $a$ and $b$ are referred to as the first-order endpoint contributions and exhibit a great deal of cancellation (see Proposition \ref{prop:fouriertail}); for example, if $f'(a)$ is an integer, then
\[
\lim_{R\to \infty} \sum_{\substack{|r|\le R \\ r\neq f'(a)}} -\frac{e(f(a)-ra)}{2\pi i(f'(a)-r)} = \psi(a) e(f(a)),
\]
and similarly if $f'(b)$ is an integer.

However, we are still left with the integral
\[
 \int_a^b \frac{f''(x)}{2\pi i(f'(x)-r)^2} e(f(x)-rx) \ dx
\]
to evaluate. We could estimate the integral by taking absolute values of the integrand, but we would then obtain an estimate of the size $O((r-f'(a))^{-1})+O((f'(b)-r)^{-1})$, which is of the same order as the first-order endpoint contributions, but without the cancellation that would allow them to be nicely summed.

One possible solution would be to apply integration by parts a second time: this gives
\begin{multline}\label{eq:secondordererror}
\int_a^b \frac{f''(x)}{2\pi i(f'(x)-r)^2} e(f(x)-rx) \ dx = \left. \frac{f''(x)e(f(x)-rx)}{(2\pi i)^2(f'(x)-r)^3} \right]_a^b\\ - \int_a^b \frac{d}{dx}\left( \frac{f''(x)}{(2\pi i)^2(f'(x)-r)^3} \right) e(f(x)-rx) \ dx,
\end{multline}
with a second-order endpoint contribution and a new integral.  The second-order endpoint terms are absolutely convergent, and if $f'(a)$ and $f'(b)$ are integers, they sum to $O(f''(a))+O(f''(b))$, which is a good estimate if $f''$ is small at the endpoints.  On the other hand, if we try to bound the new integral in \eqref{eq:secondordererror} by taking absolute values of the integrand, we get terms of the same order of magnitude as the second-order endpoint contributions (which, unlike the first-order endpoints, are summable).  But in addition, we have the total variation of $f''(x)/(f'(x)-r)^3$ on $[a,b]$, which may be roughly bounded by the sum of the moduli of local maxima and minima of $f''(x)/(f'(x)-r)^3$ on $[a,b]$.

Unless $f(x)$ is quadratic---as in Coutsias and Kazarinoff's case\footnote{Coutsias and Kazarinoff actually analyze the resulting integrals and endpoint contributions in the quadratic case when inegration by parts is repeated many times.}---it is difficult to find good estimates on these integrals.  More applications of integration by parts or the presence of a non-constant $g$ only make things worse.  Therefore, these terms are often avoided entirely by applying the truncated Poisson formula instead of the full Poisson summation formula.

\begin{prop}(Truncated Poisson formula---Proposition 8.7 in \cite{iwakow})
\footnote{An explicit version of the truncated Poisson formula for non-trivial $g$ is given in Lemma 7 of \cite{karatsubakorolev}.}

 Let $f(x)$ be a real function with $f''(x)>0$ on the interval $[a,b]$.  We then have
\begin{equation}\label{eq:truncpoisson}
 \sum_{a<n < b} e(f(n)) = \sum_{\alpha-\epsilon<r<\beta+\epsilon} \int_a^b e(f(x)-rx) \ dx + O(\epsilon^{-1}+\log(\beta-\alpha+2))
\end{equation}
where $\alpha$, $\beta$, and $\epsilon$ are any numbers with $\alpha\le f'(a)\le f'(b)\le \beta$ and $0< \epsilon \le 1$, the implied constant being absolute.
\end{prop}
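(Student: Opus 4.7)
The plan is to derive the statement from the full Poisson summation formula (Lemma \ref{lem:poissonsummation}), truncating the resulting infinite $r$-sum by bounding the tails via integration by parts and a smoothing argument. Starting from
\[
\sideset{}{^*}\sum_{a\le n \le b} e(f(n)) = \lim_{R\to\infty}\sum_{|r|\le R}\int_a^b e(f(x)-rx)\,dx
\]
(the starred convention costs only $O(1)$ on the LHS), I would split the $r$-sum into the central block $\alpha-\epsilon < r < \beta+\epsilon$, which forms the desired truncated sum, and the two tails $r\le \alpha-\epsilon$, $r\ge \beta+\epsilon$. The task reduces to bounding the tail contribution by $O(\epsilon^{-1}+\log(\beta-\alpha+2))$.

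For $r$ in either tail, the hypotheses $\alpha\le f'(a)\le f'(b)\le \beta$ and $f''>0$ force $|f'(x)-r|\ge \epsilon$ on $[a,b]$ with $f'(x)-r$ of constant sign, so there is no stationary phase point. One integration by parts yields
\[
\int_a^b e(f(x)-rx)\,dx = \frac{e(f(b)-rb)}{2\pi i(f'(b)-r)} - \frac{e(f(a)-ra)}{2\pi i(f'(a)-r)} + \int_a^b \frac{f''(x)\,e(f(x)-rx)}{2\pi i(f'(x)-r)^2}\,dx.
\]
Substituting $u=f'(x)$ and taking absolute values in the last integral bound it by $(2\pi)^{-1}\bigl((r-f'(b))^{-1}-(r-f'(a))^{-1}\bigr)$ for $r>\beta+\epsilon$ (and symmetrically in the other tail); summed over integer $r$ in the tail, these nonnegative differences telescope to $O(\epsilon^{-1})$.

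The main obstacle is controlling the boundary contributions $\sum_r e(f(c)-rc)/(2\pi i(f'(c)-r))$ for $c\in\{a,b\}$, since the weights $1/|r-f'(c)|$ are not absolutely summable. To handle this I would, at the outset, replace $\mathbf{1}_{[a,b]}$ by a Beurling--Selberg-type smoothing of length $\sim \epsilon$ near each endpoint before applying Poisson summation: the $L^1$-approximation error contributes $O(\epsilon^{-1})$ to the LHS, while the smoothed weight's Fourier transform decays sharply past frequencies of scale $\epsilon^{-1}$, so the effective boundary-type sum runs over at most $O(\beta-\alpha+\epsilon^{-1})$ integers $r$ and is bounded by $\sum 1/|r-f'(c)| \ll \log(\beta-\alpha+2)+\log\epsilon^{-1} \ll \log(\beta-\alpha+2)+\epsilon^{-1}$. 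Combining the smoothing error, the telescoping integral-remainder bound, and this boundary estimate yields the stated error term.
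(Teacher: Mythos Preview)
The paper does not give its own proof of this proposition. It is quoted in Section~\ref{section:heuristics} as a known result (Proposition~8.7 in \cite{iwakow}), precisely to illustrate the source of the $O(\log(\beta-\alpha+2))$ error term that the paper's main theorems are designed to \emph{avoid}. So there is no in-paper proof to compare against; your task was to supply what the paper deliberately omits.

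Your outline is broadly sound: start from the full Poisson formula (Lemma~\ref{lem:poissonsummation}), integrate by parts once in each tail integral, and observe that the residual integral $\int_a^b f''(x)(f'(x)-r)^{-2}\,dx$ evaluates exactly after the substitution $u=f'(x)$. One small correction: the resulting differences $\frac{1}{r-f'(b)}-\frac{1}{r-f'(a)}$ do not literally telescope over $r$; rather, their sum over $r>\beta+\epsilon$ is comparable to $\log\bigl((\beta-\alpha+1)/\epsilon\bigr)$, which is still acceptable since $\log(1/\epsilon)\le \epsilon^{-1}$.

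The real soft spot is your treatment of the boundary sums $\sum_r e(-rc)/(f'(c)-r)$ for $c\in\{a,b\}$. Invoking a Beurling--Selberg majorant at this stage is heavier machinery than the statement warrants, and your sketch of how the smoothing interacts with the already-performed integration by parts is vague (you would need to smooth \emph{before} applying Poisson, which changes the integrals themselves, not just the boundary terms). A more direct route, in the spirit of the paper's own Proposition~\ref{prop:fouriertail}, is to bound these tail sums by Abel summation: writing $f'(c)-r=m+\langle f'(c)\rangle$ and using $\bigl|\sum_{m=M}^{M'} e(mc)\bigr|\ll \min(M'-M,\|c\|^{-1})$, one gets the desired $O(\epsilon^{-1})$ bound after a short case analysis on $\|c\|$, or alternatively one averages over a small shift of the endpoint to dispose of the $\|c\|$-dependence. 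Either of these would make your argument self-contained without the Beurling--Selberg detour.
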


The truncated Poisson formula is the source of the $O(\log(f'(b)-f'(a)+2))$ error term in many of the results mentioned above.

After applying the truncated Poisson formula with $\beta=f'(b)$ and $\alpha=f'(a)$, one is left with approximately $f'(b)-f'(a)$ integrals with which one hopes to apply stationary phase estimates.  These estimates work best on a small interval around the stationary phase point, where the second derivative of $f$ is fairly constant, and the higher derivatives of $f$ are small.  So we would like to break the integrals in \eqref{eq:truncpoisson} into several pieces, such as 
\begin{equation}\label{eq:stationaryphasesplit}
 \int_a^b = \int_a^{a'} + \int_{a'}^{b'}+\int_{b'}^b
\end{equation}
where the point of stationary phase is near the middle of $[a',b']$.  However, to make this effective, we would again require good estimates on integrals with no stationary phase point (the integrals from $a$ to $a'$ and $b'$ to $b$).  In addition, one is faced with possible first-order (and higher) endpoint contributions at $a'$ and $b'$.  (While estimates of stationary phase integrals benefit from integrating on a small interval, they suffer again if the interval is too small.  The $O(T(a)+T(b))$ terms on line \eqref{eq:Tdef} that appear in stronger results on the van der Corput transform arise from bounding the stationary phase integrals where $x_r$ is closest to $a$ or $b$.)

Because of this, many results on the van der Corput transform seek to treat the entire integral from $a$ to $b$ as a single stationary phase integral, hence the common conditions, as in Theorem \ref{thm:textbook}, that $f''(x)$ has constant order on the entire interval $[a,b]$ and that higher derivatives of $f$ be likewise small on the entire interval $[a,b]$.

The techniques of this paper seek to overcome some of these difficulties.  

First, we use the recent and powerful stationary phase estimates first proved by Huxley \cite{huxley2,huxley1} and refined by Redouaby and Sargos \cite{redsar}.  These estimates show that stationary phase integrals, such as the middle term in \eqref{eq:stationaryphasesplit}, also generate first-order endpoint contributions.  In fact, these contributions directly cancel the first-order endpoint contributions at $a'$ and $b'$ generated by the first and third term in \eqref{eq:stationaryphasesplit}.  Thus we no longer need to treat the full integral from $a$ to $b$ with stationary phase estimates and so can replace the global restrictions on the derivatives of $f$ and $g$ (such as those in Theorem \ref{thm:karatsubakorolev}) with local restrictions.

In particular, we replace the the standard assumptions
\begin{align*}
 f''(x) &\asymp TM^{-2}, \\
|f^{(3)}(x)|&\ll  TM^{-3},\qquad \text{ and } \\
|f^{(4)}(x)|&\ll  TM^{-4}
\end{align*}
for $x\in [a,b]$, with an assumption that looks like
\begin{align*}
f''(z) &\asymp T(x)M(x)^{-2}, \\
|f^{(3)}(z)|&\ll T(x) M(x)^{-3},\qquad \text{ and }  \\
|f^{(4)}(z)|&\ll T(x) M(x)^{-4} 
\end{align*}
for $z\in[x-M(x),x+M(x)]$ and $x\in[a,b]$ with some function $M(x)$ (see Section \ref{section:conditionM} for more details about the function $M(x)$).

Second, to avoid use of the truncated Poisson summation formula, we develop a method to get reasonable bounds on the integrals arising from applying integration by parts twice, as in \eqref{eq:secondordererror}.  Instead of looking at one integer $r$ at a time and counting the contribution of $f''(x)/(f'(x)-r)^3$ for each $x$'s which give local maxima and minima, we instead look at each $x$ and imagine a possible fixed real-valued $r$ that causes the point $x$ to be a critical point of this function.  This then defines a function $r(x)$, and we estimate the sum over all values of $x$ where the function $r(x)$ takes integer values using a variant of Euler-Maclaurin summation (see Proposition \ref{prop:inversesum} and Section \ref{sec:variation}).

Third, since the previous two techniques frequently benefit from $f''$ being large, we develop a method to deal with the large second-order endpoint contributions in this case.  The reason for these terms being large is that 
\[
 \frac{f''(a)}{(f'(a)-r)^3}
\]
is large when $r$ is close to $f'(a)$ (and likewise at $b$).  However, if $f''(a)$ is large and $f^{(3)}(a)$ not too large in comparison, then a small shift in $a$ to, say, $a+\epsilon$ should greatly increase the size of the denominator while keeping the numerator roughly the same size.  Therefore, by altering the endpoints of the integrals in the Poisson summation formula by a small amount when $r$ is close to $f'(a)$ or $f'(b)$, we can reduce the size of the second-order endpoint contributions.

With these techniques, the main theorems of this paper give the van der Corput transform in the following form.
\begin{align*}
\sideset{}{^*}\sum_{a\le n \le b} g(n) e(f(n)) &= \sideset{}{^*}\sum_{f'(a)\le r \le f'(b)}  \frac{g(x_r)e(f(x_r)-rx_r+\frac{1}{8})}{\sqrt{f''(x_r)}}\\
&\qquad - \mathcal{D}(b)+\mathcal{D}(a)+O(\Delta_1+\Delta_2+\Delta_3+\Delta_4)
\end{align*}
The $\mathcal{D}$ terms will usually equal the first-order endpoint contributions and will be explicit in many cases.  The term $\Delta_1$ will estimate the error that arises when there exists an $x_r$ close to $a$ or $b$ and is analogous to the $T(a)+T(b)$ terms present in the results of Kolesnik, Liu, and Karatsuba and Korolev.  The term $\Delta_2$ will estimate the second-order endpoint contributions at $a$ and $b$.  The term $\Delta_3$ (and a bit of $\Delta_4$) will estimate the remaining error in applying stationary-phase methods and the second-order endpoint contributions at $a'$ and $b'$.  The remainder of $\Delta_4$ will estimate the size of the integrals that arise from applying integration by parts twice.

The techniques outlined in this section allow for a number of interesting extensions to the van der Corput transform.

Since we replace the global restrictions on the derivatives of $f$ and $g$ with local restrictions, we can apply the van der Corput transform to many new sums, including those where $f$ and $g$ have moderate oscillations.  Likewise, we can now directly apply the van der Corput transform to sums where $f''$ is large.  

The function $f(f'^{-1}(x))-xf'^{-1}(x)$ that appears on the right-hand side of the van der Corput transform \eqref{eq:vdCtransform} is sometimes referred to as the van der Corput reciprocal of the function $f$. \footnote{Redouaby and Sargos \cite{redsar2} and the papers cited within have studied the van der Corput reciprocal in much greater detail, including useful asymptotics for how the reciprocal changes as $f$ is perturbed.} If the second derivative of $f$ is large, then the second derivative of the van der Corput reciprocal of $f$ will be small, so the results cited above, if they cannot be applied directly to a sum, can be---and frequently are---applied ``backwards'' to the van der Corput transform of the sum.

For example, if we attempted to apply the form of the error from Theorem \ref{thm:karatsubakorolev} directly to the sum
\[
\sum_{0\le n \le N} e(\alpha \cdot \beta^n)
\]  
for some $\beta>1$, then the error term $O(\log(f'(b)-f'(a)+2))$ would be at least $O(N)$, the size of the trivial bound on the sum (and that doesn't even take into account the size of the implicit constant!).  Theorem \ref{thm:main} allows us to apply the transform to this sum directly and obtain a $O(1)$ error.

In general, if $f''\ge 1$ is reasonably large compared to $g$ and both are free of wild oscillations, then $\Delta$ should be no larger than $O(\max_{a\le x \le b} g(x))$.

As a final note, the results of this paper do not give an improvement in all possible cases.  If, for example, $f''$ varies between a very large value and a value very close to zero, then our results may give very poor error terms.  Another case where our results do not give any improvement is 
\[
\sideset{}{^*}\sum_{a\le n \le b} \sin(\alpha n) e(\beta n^2).
\]

%%%%%%%%%%%%%%%%%%%%%%%%%%%%%%%%%%%%%%%%%
\section{Details of the theorem}\label{section:results}
%%%%%%%%%%%%%%%%%%%%%%%%%%%%%%%%%%%%%%%%%

\subsection{The auxiliary function $M(x)$ and condition $(M)$}\label{section:conditionM}

We will use a function $M(x)$ to measure the length of an interval around $x$ where the functions $f''(x)$ and $g(x)$ are well-approximated by their Taylor polynomials up to the second degree.  The larger $M(x)$ is, the more linear the functions $f''(x)$ and $g(x)$ will appear at the point $x$.

 To be more concrete, by condition $(M)$, we shall refer to the existence of positive, bounded, continuously differentiable functions $M(x)$ and $U(x)$ on $[a,b]$, along with several associated positive constants $C_2$, $C_{2^-}$, $C_4$, $D_0$, $D_1$, $D_2$, $\delta$, which together satisfy several conditions:
\begin{enumerate}
\item[(I)] $\max\{M(a),M(b)\}\le b-a$;
\item[(II)] $\delta<1$, and let $\eta := 3 \delta/C_{2^-}$ satisfy $\eta<2$;
\item[(III)] If $J:=[a-c(a)\cdot M(a),b+c(b) \cdot M(b)]$, where
\[
c(x)=\begin{cases}
0, & \text{if }\mathbb{Z}\cap (f'(x)-f''(x),f'(x)+f''(x))\setminus \{f'(x)\} = \emptyset \text{, and}\\
1, & \text{otherwise,}
\end{cases}
\]
 then $f$ is $C^4[J]$ and $g$ is $C^3[J]$; and,
\item[(IV)] If $I_x$ denotes the intersection $[x-M(x),x+M(x)]\cap J$, then for all $x\in [a,b]$ we expect all the following inequalities to hold for $z\in I_x$:
\begin{align*}
\frac{1}{C_{2^-}}f''(x)\le f''(z) &\le C_2 f''(x), & & &|g(z)| &\le D_0 U(x) ,\\
 |f^{(3)}(z)| &\le \eta \frac{f''(x)}{ M(x)}, & & &  |g'(z)| &\le D_1 \frac{ U(x)}{M(x)}, \\
 |f^{(4)}(z)| &\le \eta^2 C_4 \frac{ f''(x)}{ M(x)^2},& &\text{and} & |g''(z)| &\le D_2 \frac{U(x)}{M(x)^2}.
\end{align*}
\end{enumerate}

For any given $f(x)$, $g(x)$, $a$, and $b$, there are infinitely many possible choices of the functions $M(x)$, $U(x)$, and the associated constants that satisfy the above conditions.  Therefore, for the remainder of the paper, we shall assume that if $f(x)$, $g(x)$, $a$, and $b$ remain unchanged, the particular functions $M(x)$ and $U(x)$ and associated constants we reference will be likewise unchanged.  If $f(x)$ and $g(x)$ are unchanged, but $a$ and $b$ allowed to vary, then we will assume, first, that the associated constants will be unchanged, and, second, that given two intervals $[a_1,b_1]$ and $[a_2,b_2]$ with corresponding auxiliary functions $M_1(x)$, $U_1(x)$ and $M_2(x)$, $U_2(x)$, we have $M_1(x)=M_2(x)$ and $U_1(x)=U_2(x)$ on $(a_1,b_1)\cap (a_2,b_2)$.

Since condition $(M)$ is rather intricate, we pause a moment to illuminate it further.

\begin{enumerate}
\item[(II)] The requirement of $\delta<1$ and $\eta<2$ in condition $(M)$ part (II) guarantee that the Taylor approximations to $f''$ has good properties (see Proposition \ref{prop:f'bound}, Lemma \ref{lem:redsar1}, and the remark following the lemma).

\item[(III)] The restriction of condition $(M)$ part (III) to have $f$ and $g$ be several times continuously differentiable on the larger interval $J$ is often no restriction at all.  Many applications of the van der Corput transform have $f(x)$ and $g(x)$ be polynomials, exponentials, logarithms, or other $C^{\infty}$ functions that exist on large domains.  

The bounds on the derivatives of $f''$ and $g$ in condition $(M)$ part (IV) allow us to use estimates on stationary phase integrals that will be discussed in Section \ref{sec:lemstationaryphase}.  By expecting the properties of $f''$ and $g$ to extend to the larger interval $J$, we can also extend the integrals under consideration to this larger interval.

(While typical stationary phase results require only that $g$ is twice continuously differentiable, we require three times in order for the functions $W_\pm'$ and $r_\pm'$ to exist.)

\item[(IV)] To understand the complex system of inequalities in condition $(M)$ part (IV), it is helpful to rewrite them in terms of Taylor approximations.  The first three inequalities then become 
\begin{align}
f''(z) &\asymp f''(x) \notag \\
f''(z) &= f''(x)\left( 1+O\left( \frac{\eta}{M(x)}(z-x)\right) \right) \notag \\
f''(z) &= f''(x)\left( 1+O\left( \frac{\eta^2 C_4}{M(x)^2} (z-x)^2 \right) \right) + f^{(3)}(x)(z-x) \label{eq:conditionEvar}
\end{align}
for $z\in I_x$ with implicit constant $1$ in the second and third lines.  The equality on line \eqref{eq:conditionEvar} gives definite form to our earlier statement that the larger $M(x)$ can be, the more linear $f''$ appears at the point $x$.

Alternately, one can interpret $M(x)$ as somehow representing the rate of decay as one takes successive derivatives.  At the point $z=x$, the various inequalities imply---among other things---that $|f^{(3)}(x)| \ll f''(x)/M(x)$ and $|f^{(4)}(x)| \ll f''(x)/M(x)^2$.  

The conspicuous absense of a constant $C_3$ is intentional.  We could include such a constant, but later estimates are made slightly simpler by having $C_3=1$
\end{enumerate}

Here are some examples of the function $M(x)$ in different circumstances.  In each case we assume that the associated constants always take the same values: $\delta$ equals $1/2$, and $C_2$, $C_{2^-}$, $C_4$, $D_0$, $D_1$, and $D_2$ all equal $2$.
\begin{itemize}
\item If $f(x)$ and $g(x)$ are polynomials, there exists an $\epsilon>0$ such that condition $(M)$ is satisfied with $M(x)=\epsilon \cdot x$ and $U(x)=g(x)$ for sufficiently large $x$.
\item If $f(x)$ and $g(x)$ are exponential functions ($\alpha\cdot \beta^x $, with $\beta>1$), there exists an $\epsilon>0$ such that condition $(M)$ is satisfied with $M(x)=\epsilon$ and $U(x)=g(x)$.
\item If $f(x)=t(\log x)/2\pi$ and $g(x)=x^{-\sigma}$, which corresponds to the Riemann Zeta function at $z=\sigma+i t$, then there exists an $\epsilon >0$ such that condition $(M)$ is satisfied with $M(x)=\epsilon x$ and $U(x)=g(x)$ independent of $t$.
\item If $f(x) = \alpha x^2 +\beta x^{-1} \sin(\gamma x)$ and $g(x)=1$, then there exists an $\epsilon >0$ such that condition $(M)$ is satisfied with $M(x)=\epsilon\sqrt{x}$ and $U(x)=1$.
\item If $f(x)$ is a power function and $g(x) = \sin(\alpha x)$ for some constant $\alpha$, then there exists an $\epsilon>0$ such that condition $(M)$ is satisfied with $M(x)=\epsilon$ and $U(x)=1$.
\end{itemize}

%%%%%%%%%%%%%%%
\subsection{The assumptions}\label{section:assumptions}
%%%%%%%%%%%%%%%%%

Assume that condition $(M)$ holds for some function $M(x)$ and $U(x)$.  Let $J$ be as in condition $(M)$ part (III).

We assume that $f''(x)$ is a positive, real-valued function, bounded away from $0$ on the interval $J$, and that $g(x)$ is a real-valued function on $J$ as well. (The case $f''(x)<0$ may be considered by taking the conjugate of the sum.)  This will guarantee that $f'(x)$ is a continuous, monotonic function, so that $x_r:= f'^{-1}(r)$ is well defined.  

Let $J_\pm$ be the union of all intervals $[a',b']\subset J$ such that $G(x)\neq 0$ and $H(x)^2-G(x)\ge 0$ for $x\in [a',b']$, let $J_0$ be the union of all intervals $[a',b']\subset J$ such that $g''(x)=0$, $g(x)\neq 0$, and $H(x) \neq 0$ for $x\in [a',b']$, and let $J_0$ be the set of points $x\in J$ such that $g(x)=0$ but $g'(x)\neq 0$ and $g''(x)\neq0$.  It is possible that $J_\pm$ will contain isolated points due to $H(x)^2-G(x)$ having a $0$ on an interval where it is otherwise negative, and it is also possible that $J_0$ will contain isoluted points due to $g''(x)$ having a $0$ on an interval where it is otherwise non-zero; we denote the set of isolated points in $J_\pm$ and $J_0$ by $J_\pm^*$ and $J_0^*$ respectively.  Let $\partial J_\pm$ and $\partial J_0$ denote the endpoints of the non-zero-length intervals contained in their respective sets.  In particular, $\partial J_\pm \cap J_\pm^* = \varnothing$ and $\partial J_0 \cap J_0^* = \varnothing$.

For our final assumption, suppose that if $(a',b')$ is an interval contained in $J_\pm$, then either $H(x)^2-G(x)$ equals $0$ on the whole interval and $H(x)$ does not equal $0$ at any point on the interval nor does it tend to $0$ at the endpoints, or $H(x)^2-G(x)$ does not equal $0$ at any point on the interval nor does it tend to $0$ at the endpoints; and if $(a',b')$ is an interval contained in $J_0$, then $g(x)$ does not tend to $0$ at the endpoints of this interval.

%%%%%%%%%%%%%%%%
\subsection{Statement of the theorems and proofs of some corollaries}
%%%%%%%%%%%%%%%%

\begin{thm}\label{thm:main}
Assume all the conditions of Section \ref{section:assumptions} hold.  Then 
\[
\sideset{}{^*}\sum_{a\le n \le b} g(n) e(f(n)) = \sideset{}{^*}\sum_{f'(a)\le r \le f'(b)} \frac{g(x_r)e(f(x_r)-rx_r+\frac{1}{8})}{\sqrt{f''(x_r)}}-\mathcal{D}(b)+\mathcal{D}(a)+\Delta
\]
where 
\begin{align*}
\mathcal{D}(x) &= \mathcal{D}^\circ(x)+\mathcal{D}^*(x)\\
\mathcal{D}^\circ (x) &= 
\begin{cases}
\begin{aligned}&g(x)e(f(x)+\left\llbracket f'(x)\right\rrbracket x)\times \\ & \quad\times \left(-\dfrac{1}{ 2\pi i \langle f'(x) \rangle} + \psi(x,\langle f'(x) \rangle)\right) \end{aligned}& f''(x) < \| f'(x) \|\\
g(x)e(f(x)+\left\llbracket f'(x)\right\rrbracket x)\psi(x,\langle f'(x) \rangle)& \| f'(x) \| \le f''(x)< 1-\|f'(x)\|\\
O(U(x)) & 1-\|f'(x)\| \le f''(x)<1\\
O\left(U(x)\left( 1+ \dfrac{1}{M(x)}+\dfrac{1}{\sqrt{f''(x)}M(x)} \right)\right) & f''(x) \ge 1 \\
\end{cases}\\
\mathcal{D}^*(x) &= \begin{cases}
\dfrac{g(x)f^{(3)}(x)e(f(x))}{6\pi i f''(x)^2}-\dfrac{g'(x)e(f(x))}{2\pi i f''(x)} & \|f'(x)\|=0\\
0 & \|f'(x)\|\neq 0
\end{cases}\\
\Delta &= \sum_{i=1}^3 O(\Delta_i(a)+\Delta_i(b)) + \Delta_4\\
 \Delta_1(x) &= \begin{cases}
                 \min\left\{ \dfrac{U(x)}{\sqrt{f''(x)}} , \dfrac{U(x)}{\| f'(x)\|} \right\} & \| f'(x)\| \neq 0, \quad m_x \ge 1\\
                 \dfrac{U(x)}{f''(x)^2(b-a)^3} & \| f'(x)\| =0 \\
                 0 & \| f'(x)\| \neq 0, \quad m_x = 0
                \end{cases}\\
\Delta_2(x) &=\frac{U(x)}{f''(x)^2M(x)^3}(1+\sqrt{f''(x)}M(x))(1+f''(x))+ \frac{U(x)m_x}{f''(x)M(x)}\\
&\qquad + \begin{cases}
       \begin{aligned}&\dfrac{U(x)}{M(x)}\min\left\{1,\dfrac{1}{f''(x)}  \right\}\\ &\quad +U(x)\min\left\{f''(x),\dfrac{1}{f''(x)}  \right\}\end{aligned} & \text{if }\|f'(x)\|=0 \text{ or } m_x \ge 1\\
       \dfrac{U(x)}{M(x)\|f'(x)\|^2} + \dfrac{U(x)f''(x)}{\|f'(x)\|^3} & \text{otherwise}
      \end{cases}\\
\Delta_3(a) &= \int_{\overline{a}}^{b} \frac{U(x)}{f''(x)(x-a)^3}\left(1+\frac{1}{f''(x)M(x)}+\frac{1}{f''(x)(x-a)}  \right) \ dx \\
&\qquad + \frac{U(\overline{a})}{f''(\overline{a})^2(\overline{a}-a)^3} +\frac{U(b)}{f''(b)^2(b-a)^3}\\
\Delta_3(b) &= \int_{a}^{\overline{b}} \frac{U(x)}{f''(x)(b-x)^3}\left(1+\frac{1}{f''(x)M(x)}+\frac{1}{f''(x)(b-x)}  \right) \ dx \\
&\qquad + \frac{U(\overline{b})}{f''(\overline{b})^2(b-\overline{b})^3} + \frac{U(a)}{f''(a)^2(b-a)^3}\\
\Delta_4& = \int_a^b \frac{U(x)}{f''(x)M(x)^3} \left(1+\sqrt{f''(x)}M(x)\right) \left( 1+ \frac{1+|M'(x)|}{f''(x)M(x)}\right) \ dx \\
&\qquad + \mathcal{K}(J_0,W_0,r_0)+ \mathcal{K}(J_\pm,W_+,r_+) + \mathcal{K}(J_\pm,W_-,r_-)\\
&\qquad + \sum_{x \in J_{null}} \frac{g''(x)^2}{g'(x)f''(x)^2}
\end{align*}
and
\begin{align*}
\mathcal{K}(I,W,r)&= \int_I \left(|W(x)| |r'(x) | +|W'(x)| \right) \ dx + \sum_{x\in I^*} |W(x)|\\
&\qquad \qquad + \sum_{\substack{x\in I \text{ and } r'\text{ changes sign at }x \\ \text{or }x\in \partial I}} |s(x)\cdot W(x)|.
\end{align*}

The number $m_x$ equals the cardinality of the set
\[
 \mathbb{Z}\cap (f'(x)-f''(x),f'(x)+f''(x))\setminus\{f'(x)\}.
\]
In particular, 
\[
 m_x=\begin{cases}
      0 & f''(x) \le \| f'(x) \|\\
      O(1+f''(x)) & f''(x) > \| f'(x) \|
     \end{cases}.
\]
Also, $\overline{a}$ equals the smallest value in the interval $[a+\min\{M(a),C_2^{-1}\},b]$ such that $f'(\overline{a})$ is an integer; if no such value exists, then we may take $\Delta_3(a)=0$.  Similarly, $\overline{b}$ equals the largest value in the interval $[a,b-\min\{M(b),C_2^{-1}\}]$ such that $f'(\overline{b})$ is an integer, and if no such value exists, then we may take $\Delta_3(b)=0$.

The functions $M(x)$ and $U(x)$ are as in condition $(M)$, the sets $J_\pm$, $J_\pm^*$, $J_0$, and $J_0^*$ are all as in Section \ref{section:assumptions},  and the functions $\psi$, $s$, $W_0$, $r_0$, $W_\pm$, $r_\pm$, $\langle \cdot \rangle$, $\llbracket \cdot \rrbracket$, and $\| \cdot \|$ are all as in Section \ref{section:notation}.

The implicit constants in the big-O terms are dependent upon the constants $C_2$, $C_{2^-}$, $C_4$, $D_0$, $D_1$, $D_2$, $\delta$ in condition $(M)$.  The convergence of the integrals in $\Delta_4$ will be guaranteed by Proposition \ref{prop:integrability}
\end{thm}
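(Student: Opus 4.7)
The plan is to execute the two-step van der Corput strategy outlined in Section \ref{section:heuristics} (Poisson summation followed by stationary phase or integration by parts on each integral), but to avoid the truncated Poisson formula entirely by keeping the full infinite sum and extracting cancellations. First I would apply the Poisson summation formula (Lemma \ref{lem:poissonsummation}) to rewrite the starred sum as $\lim_{R\to\infty}\sum_{|r|\le R}\int_a^b g(x)e(f(x)-rx)\,dx$. Each integral $I_r$ then falls into one of three regimes: (a) ``stationary'' $r$ with $f'(a)\le r\le f'(b)$, for which $f'(x_r)=r$ has a solution in $[a,b]$; (b) ``near-endpoint'' $r$ with $x_r$ very close to $a$ or $b$, for which the stationary phase approximation degrades; and (c) ``distant'' $r$ with $x_r\notin[a,b]$.

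For regime (a) I would invoke the refined stationary phase estimates of Huxley and of Redouaby--Sargos (quoted in Section \ref{sec:lemstationaryphase}), applied on a subinterval $[a',b']\subset I_{x_r}$ concentric at $x_r$ of half-width comparable to $M(x_r)$. These yield the main term $g(x_r)e(f(x_r)-rx_r+\tfrac{1}{8})/\sqrt{f''(x_r)}$, explicit first-order endpoint contributions at $a'$ and $b'$, and error contributions that feed into $\Delta_4$ and part of $\Delta_3$. Outside $[a',b']$ I would apply integration by parts once, producing first-order endpoint contributions at $a',b'$ that will telescope against those from the stationary phase step, plus a single integral that I then integrate by parts a second time. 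For regime (c) I would integrate by parts twice directly on $[a,b]$. The regime (b) contribution is bounded directly and furnishes the $\Delta_1$ terms, in the same spirit as $T(\mu)$ in Theorem \ref{thm:karatsubakorolev}. The net first-order endpoint contributions, summed over $r$, reduce to contributions only at the true endpoints $a,b$; using the identity $\lim_{R\to\infty}\sum_{0<|r|<R}e(rx)/(r+\epsilon)=-2\pi i\,\psi(x,\epsilon)$ (which is essentially the defining property of $\psi(x,\epsilon)$), these assemble into the $\mathcal{D}^\circ(x)$ expression. The correction $\mathcal{D}^*(x)$ arises precisely when $\|f'(a)\|=0$ or $\|f'(b)\|=0$, where the stationary phase main term at the endpoint must be matched against the starred-sum convention, and the next-order stationary phase expansion in powers of $f^{(3)}/(f'')^2$ and $g'/f''$ contributes the displayed fractions $g f^{(3)}e(f)/(6\pi i(f'')^2)$ and $-g'e(f)/(2\pi i f'')$.

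The two delicate pieces will be (i) controlling the second-order endpoint contributions $f''(a)/(f'(a)-r)^3$ and $f''(b)/(f'(b)-r)^3$ that appear in regime (c) and assemble into $\Delta_2$ and $\Delta_3$, and (ii) bounding the residual integrals from the second integration by parts, namely $\int h_r'(x)\,e(f(x)-rx)\,dx$ summed over $r$, which produce $\Delta_4$. For (i) the fix outlined in the heuristics is to shift the endpoint of integration by a small amount whenever $r$ is close to $f'(a)$ or $f'(b)$; the distance to shift is controlled by $M(a),M(b)$ and by the locations of the integers in $(f'(a)-f''(a),f'(a)+f''(a))$ (which explains why $\overline{a},\overline{b}$ and $m_x$ appear in $\Delta_2,\Delta_3$). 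For (ii) I would use the inverse-summation technique of Proposition \ref{prop:inversesum}: rather than bound the total variation of $h_r$ for each $r$ separately and then sum, I solve, for each $x\in[a,b]$, the equation that makes $x$ a critical point of $h_r$; differentiating $h_r(x)$ in $x$ and setting the result to zero produces either the quadratic $g''(x)(f'(x)-r)^2-H(x)(f'(x)-r)+3g(x)f''(x)^2=0$ (giving the two branches $r_\pm(x)$ with weights $W_\pm(x)$ built from its roots) or, when $g''(x)=0$, the linear equation giving $r_0(x)$ with weight $W_0(x)$. Summing the extremal values of $h_r$ over $r$ via the Euler--Maclaurin variant in Proposition \ref{prop:inversesum} then converts the double sum $\sum_r\text{Var}(h_r)$ into the integrals and discrete sums composing $\mathcal{K}(I,W,r)$, with the boundary and isolated-point terms (including $J_\pm^*,J_0^*,\partial J_\pm,\partial J_0$) accounting for the places where $r_\pm$ is undefined or changes sign. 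The main obstacle throughout will be bookkeeping: verifying that all first-order endpoint contributions cancel across the three regimes and that every leftover term fits into one of $\Delta_1,\Delta_2,\Delta_3,\Delta_4$ with the specific form claimed, particularly through the case distinctions in $\mathcal{D}^\circ$ according to the relative sizes of $f''(x),\|f'(x)\|$, and $1$.
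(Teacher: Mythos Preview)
Your proposal is correct and follows essentially the same route as the paper's proof: Poisson summation, the endpoint shift $a\mapsto a_r$, $b\mapsto b_r$ for $r$ near $f'(a)$ or $f'(b)$, Redouaby--Sargos stationary phase on windows $[\alpha_r,\beta_r]\subset I_{x_r}$, telescoping of first-order endpoint terms, assembly of the surviving first-order contributions at $a,b$ into $\mathcal{D}^\circ$ via Proposition~\ref{prop:fouriertail}, and the inverse-summation of the critical values of $h_r$ via Proposition~\ref{prop:inversesum} to produce the $\mathcal{K}$ terms. One organizational point worth aligning with the paper: the endpoint shift is performed \emph{first}, immediately after Poisson summation, and the correction integrals $\int_{a_r}^a$, $\int_b^{b_r}$ are themselves what produce both $\Delta_1$ (for the nearest one or two $r$) and the inexplicit $O(U(x)(1+M(x)^{-1}+(f''(x)^{1/2}M(x))^{-1}))$ case of $\mathcal{D}^\circ$ when $f''\ge 1$; the quantities $\overline{a},\overline{b}$ in $\Delta_3$ arise not from this shift but from where the Euler--Maclaurin bound on $\sum_r U(x_r)/(f''(x_r)^2(x_r-a)^3)$ begins after the near-$a$ terms have been peeled off.
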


Before continuing to further refinements of the main theorem, let us pause to assure the reader that the vast collection of error terms above are more psychologically daunting than they are computationally difficult.  As an instructive example, consider the case of Corollary \ref{cor:iwakowimprovement}, where
\[
f(x) = \frac{X}{\alpha} \left( \frac{x}{N} \right)^\alpha, \qquad g(x) = \left( \frac{\alpha}{x} \right)^{\frac{1}{2}},
\]
$a=N$, and $b=\nu N$.  $N$ and $X$ are assumed to be positive and $\alpha$ and $\nu$ both strictly greater than $1$.  Without loss of generality, we may assume $N\le \sqrt{X}$ (otherwise, we would instead consider the right-hand side of \eqref{eq:iwakowexample}).  

Note that $f(x)$ and $g(x)$ are both power functions, so condition $(M)$ holds with $C_2$, $C_{2^-}$, $C_4$, $D_0$, $D_1$, and $D_2$ all equal to $2$, $\delta$ equal to $1/2$, $M(x)$ equal to $\epsilon x$ for some small $\epsilon>0$ dependent on $\alpha$ only, and $U(x)$ equal to $g(x)$.

The benefit of our conditions $1 \ll N \le \sqrt{X}$ is that on the interval $[N,\nu N]$, the functions $f''(x)$ and $M(x)$ are both bounded below by some constant (depending on $\alpha$ and the implicit constant in $1 \ll N$).  This immediately gives that the $\mathcal{D}(x)$, $\Delta_1(x)$, and $\Delta_2(x)$ terms are all $O(U(a)+U(b))=O(N^{-1/2})$, since each summand of these three terms contains a factor of $U(x)$ divided by some positive power of $f''(x)$ and $M(x)$, which are bounded from below.  (Note that $\|f'(x) \|$ terms only appear if $\|f'(x)\|$ is bigger than $f''(x)$ to begin with.)

For the $\Delta_3$ terms, we need to consider $\overline{a}$ and $\overline{b}$.  It might be quite difficult to calculate these values explicitly, but we only make the $\Delta_3$ terms bigger by assuming $\overline{a}$ is as small as possible (that is, $\overline{a}=a+\min\{M(a),C_2^{-1}\}$) and $\overline{b}$ is as large as possible. Since $M(x) \gg 1$ on $[N,\nu N]$, we have that $\overline{a}-a \gg 1$ and $b-\overline{b}\gg 1$.  Thus, by first applying the fact that $f''(x)\gg 1$ and $M(x) \gg 1$ and then that $\overline{a}-a \gg 1$, the integral in $\Delta_3(a)$ may be bounded as
\begin{align*}
&\int_{\overline{a}}^{b} \frac{U(x)}{f''(x)(x-a)^3}\left(1+\frac{1}{f''(x)M(x)}+\frac{1}{f''(x)(x-a)}  \right) \ dx\\
&\qquad \ll U(a) \int_{\overline{a}}^{b} \frac{1}{(x-a)^3}+\frac{1}{(x-a)^4} \ dx\\
&\qquad \ll \frac{U(a)}{(\overline{a}-a)^2} \\
&\qquad \ll U(a),
\end{align*}
which is $O(N^{-1/2})$ again.  A similar argument shows that the remaining terms in $\Delta_3(a)$ and all the terms in $\Delta_3(b)$ are $O(N^{-1/2})$ as well.

This is one reason why the error terms are much easier to deal with than at first glance: \emph{provided $f''(x)\gg 1$ and $M(x)\gg 1$ on $[a,b]$, the error terms $\mathcal{D}(x)$ and $\Delta_i(x)$ for $i=1,2,3$ are all bounded by $O(\max_{[a,b]}U(x))$, where the implicit constant is dependent on the implicit constants in the lower bounds of $f''$ and $M$.}

To nicely bound the first integral in $\Delta_4$, however, we must use the fact that $M(x)$ and $f''(x)$ are not only bounded below, but also somewhat large.  In particular, we have
\begin{align*}
 &\int_a^b \frac{U(x)}{f''(x)M(x)^3} \left(1+\sqrt{f''(x)}M(x)\right) \left( 1+ \frac{1+|M'(x)|}{f''(x)M(x)}\right) \ dx\\
 &\qquad \ll \int_a^b \frac{U(x)}{M(x)} \ dx \\
 &\qquad \ll \int_N^{\infty} x^{-3/2} \ dx \\
 &\qquad \ll N^{-1/2}.
\end{align*}

The greatest complication comes in evaluating the remaining condition from Section \ref{section:assumptions} dealing with the functions $G$ and $H$ and the remaining error terms in $\Delta_4$.  In this case, the function $g''$ is never zero on the interval $J=[N(1-\epsilon),\nu N(1+\epsilon)]$.  Hence $J_0$ and $J_{null}$ are empty, so both $\mathcal{K}(J_0,W_0,r_0)$ and the final sum of $\Delta_4$ are $0$.  This also shows that the functions $f$ and $g$ satisfy the remaining condition from Section \ref{section:assumptions}.

Since $f$ and $g$ are power functions, we can quickly verify, with simple hand calculations, that the functions $H$, $G$, $W_+$, and $W_-$ are all power functions as well; that is, each is of the form $c_1 \cdot x^{c_2}$.  In particular, the functions $W_\pm(x)$ are equal to
\[
c_{\alpha,\pm} \left( \frac{N^\alpha}{X} \right)^{2} x^{\frac{1}{2}-2\alpha},
\]
where $c_{\alpha,\pm}$ is a constant depending on $\alpha$ and the sign of $\pm$.  The functions $r_\pm(x)$ are equal to
\[
c'_{\alpha,\pm} \frac{X}{N^\alpha} x^{\alpha-6} +c''_{\alpha,\pm} \frac{X}{N^\alpha} x^{\alpha-1},
\]
with new constants dependent on $\alpha$ and the sign of $\pm$.

 Also, $H(x)^2-G(x)$ is a power function; so, $J_\pm$ is either the interval $[N(1-\epsilon),\nu N(1+\epsilon)]$ or is empty (if $H(x)^2-G(x) <0$ for all $x$, in which case the remaining $\mathcal{K}$ terms equal $0$ and we are finished).

In general, we expect that if $g(x)$ and $f''(x)$ share the same rate of decay in their derivatives, then the various functions $H$, $G$, and the $W$ and $r$ functions should all be relatively well-behaved.  In our case, since $g$ and $f''$ are both power functions, taking derivatives of each entails multiplying by a constant and dividing by $x$.  Thus, we have that $g(x) f^{(3)}(x)$ is a constant multiple of $g'(x)f''(x)$, hence why $H$ is a power function as well.

Returning our attention to the remaining error terms of $\Delta_4$, first consider the integrals.  We have
\begin{align*}
 &\int_{J_\pm} \left( |W_-(x)| |r_-'(x)|+|W_+(x)| |r_+'(x)|\right)+\left( |W_+'(x)|+ |W_-'(x)|\right)\ dx\\
 &\qquad \ll \int_{N(1-\epsilon)}^{\nu N(1+\epsilon)} \frac{N^\alpha}{X} x^{-\frac{1}{2}-\alpha} + \frac{N^{2\alpha}}{X^2} x^{-\frac{1}{2}-2\alpha} \ dx \\
& \qquad \ll \frac{N^\alpha}{X} \cdot N^{\frac{1}{2}-\alpha} + \frac{N^{2\alpha}}{X^2}\cdot N^{\frac{1}{2}-2\alpha} \\
&\qquad \ll \frac{N^{1/2}}{X}\\
&\qquad \le N^{-1/2}
\end{align*}
The functions $r'_+(x)$ and $r'_-(x)$ have at most $5$ zeroes on $[a,b]$, and since the functions $W_+(x)$ and $W_-(x)$ are decreasing and since $J_\pm^*$ is empty, the remaining terms in $\Delta_4$ are bounded by
\begin{align*}
&|s(N(1-\epsilon))\cdot W_-(N(1-\epsilon))|+|s(N(1-\epsilon))\cdot W_+(N(1-\epsilon))|\\
&\qquad \ll  \left( \frac{N^\alpha}{X} \right)^{2} N^{\frac{1}{2}-2\alpha}\\
&\qquad \ll N^{-1/2}.
\end{align*}
(We implicitly used that $N\gg 1$ and $M\gg 1$ implies $X\gg 1$.)  

Since we assumed $N\le \sqrt{X}$, we have $N^{-1/2} \ge M^{-1/2}$, this completes the proof of Corollary \ref{cor:iwakowimprovement}.

Under fairly strong and yet quite common conditions, we can remove much of the complication from the assumptions of Section \ref{section:assumptions} and from $\Delta_4$.

\begin{thm}\label{thm:alternate4}
Assume that $f$ and $g$ be real-valued functions satisfying $f''(x) > 0 $ on $[a,b]$ and condition $(M)$ for a function $M(x)$ such that
\[
M(x) \ge \max\{b-x,x-a\}
\]
on $[a,b]$.  Also assume that $m_a=m_b=0$, where, as in Theorem \ref{thm:main}, $m_x$ is the cardinality of the set
\[
 \mathbb{Z}\cap (f'(x)-f''(x),f'(x)+f''(x))\setminus\{f'(x)\}.
\]

Under these conditions, the result of Theorem \ref{thm:main} holds, with $\Delta_4$ now just equal to 
\[
\int_a^b \frac{U(x)}{f''(x)M(x)^3} \left(1+\sqrt{f''(x)}M(x)\right) \left( 1+ \frac{1+|M'(x)|}{f''(x)M(x)}\right) \ dx.
\]

The condition that $m_a=m_b=0$ also simplifies more of the terms to the following.
\begin{align*}
\mathcal{D}(x)+O(\Delta_1(x)) &= 
\begin{cases}
\begin{aligned}&g(x)e(f(x)+\left\llbracket f'(x)\right\rrbracket x)\times \\ & \quad\times \left(-\dfrac{1}{ 2\pi i \langle f'(x) \rangle} + \psi(x,\langle f'(x) \rangle)\right) \end{aligned}& \|f'(x) \| \neq 0 \\
\begin{aligned}&\dfrac{g(x)f^{(3)}(x)e(f(x))}{6\pi i f''(x)^2}-\dfrac{g'(x)e(f(x))}{2\pi i f''(x)}\\ &\quad+ O\left(\dfrac{U(x)}{f''(x)^2(b-a)^3}\right)\end{aligned} & \| f'(x) \| =0
\end{cases}
\end{align*}
\end{thm}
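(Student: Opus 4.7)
The strategy is to specialize Theorem \ref{thm:main} to the stronger hypotheses of Theorem \ref{thm:alternate4}. The key observation is that the requirement $M(x)\ge \max\{b-x,x-a\}$ forces the interval $I_x$ of condition $(M)$ part (IV) to contain all of $[a,b]$ for every $x\in [a,b]$, so the local Taylor-type bounds on $f'',f^{(3)},f^{(4)}$ and on $g,g',g''$ become global bounds on $[a,b]$.

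For the simplification of $\Delta_4$, I would first compute pointwise bounds on each of the auxiliary functions $W_\pm,r_\pm,W_0,r_0$ and their derivatives, using their definitions from Section \ref{section:notation} in terms of $f,g$ and their derivatives together with the global Taylor bounds. The next step is to show that each piece of $\mathcal{K}(J_\pm,W_\pm,r_\pm)+\mathcal{K}(J_0,W_0,r_0)+\sum_{x\in J_{null}} g''(x)^2/(g'(x)f''(x)^2)$ is dominated by the surviving integrand
\[
\frac{U(x)}{f''(x)M(x)^3}\bigl(1+\sqrt{f''(x)}\,M(x)\bigr)\left(1+\frac{1+|M'(x)|}{f''(x)M(x)}\right).
\]
The interior integrals $\int(|W||r'|+|W'|)\,dx$ are handled by this pointwise domination; the contributions at isolated points of $J_\pm^*,J_0^*$ and at sign changes of $r'_\pm$ are handled by a counting argument together with the pointwise bound on $|W|$; the $J_{null}$ sum is handled using the global bound on $g''$ together with a lower bound on the spacing of the exceptional points. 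I expect this step to be the main technical obstacle, since it requires carefully tracking the powers of $M(x)$ that appear on each side and verifying that no loss occurs.

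Finally, I would simplify $\mathcal{D}(x)+O(\Delta_1(x))$ at each endpoint using $m_a=m_b=0$. By the definition of $m_x$, this forces $f''(a)\le \|f'(a)\|$ when $\|f'(a)\|>0$ and $f''(a)<1$ when $\|f'(a)\|=0$. In the first subcase only the first branch of $\mathcal{D}^\circ(a)$ applies generically, $\mathcal{D}^*(a)=0$, and $\Delta_1(a)=0$, directly yielding the first line of the simplified formula (the borderline instance $f''(a)=\|f'(a)\|$, which technically falls in the second branch, is absorbed into this by continuity). In the second subcase only the second branch of $\mathcal{D}^\circ(a)$ applies, $\mathcal{D}^*(a)$ produces the two explicit main terms shown, $\Delta_1(a)=U(a)/(f''(a)^2(b-a)^3)$ provides the stated error, and the residual $\psi(a,0)$ contribution from $\mathcal{D}^\circ(a)$ must be absorbed into this error. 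An analogous argument applies at $x=b$, and the pieces of $\Delta_2(a),\Delta_2(b)$ indexed by the $m_x\ge 1$ branch drop out. A quick check that the structural assumptions of Section \ref{section:assumptions} on $J_\pm,J_0,J_{null}$ remain available under the present hypotheses completes the argument.
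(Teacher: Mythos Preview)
Your approach diverges from the paper's in a fundamental way, and the divergence creates a genuine gap.

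The paper does \emph{not} deduce Theorem~\ref{thm:alternate4} by specializing the statement of Theorem~\ref{thm:main} and then arguing that the $\mathcal{K}$ contributions to $\Delta_4$ are dominated by the surviving integral.  Instead it reopens the proof of Theorem~\ref{thm:main} at the point where the remaining integrals in $S_3$ are estimated (Section~\ref{section:remainingintegrals}).  There, the generic proof applies Proposition~\ref{prop:intbyparts}, which produces the variation terms $K_r(\alpha,\beta)$ and hence, after Section~\ref{sec:variation}, the $\mathcal{K}(J_\pm,W_\pm,r_\pm)$ and $\mathcal{K}(J_0,W_0,r_0)$ contributions.  Under the hypothesis $M(x)\ge\max\{b-x,x-a\}$, every integral appearing in $S_3$ is over a subinterval of $[x-M(x),x+M(x)]$ for the relevant $x$, and since $m_a=m_b=0$ one has $a_r=a$, $b_r=b$, $\alpha_r=a$, $\beta_r=b$ throughout.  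This is exactly the setting of Proposition~\ref{prop:intbypartsvar}, which replaces Proposition~\ref{prop:intbyparts} and yields directly the endpoint terms plus errors of the shape $U(x)f''(x)/|f'(x)-r|^3$ and $U(x)/(M(x)(f'(x)-r)^2)$, with \emph{no} $K_r$ term at all.  Consequently the $\mathcal{K}$ pieces of $\Delta_4$ simply never arise.

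Your route has two concrete problems.  First, Theorem~\ref{thm:main} requires the full assumptions of Section~\ref{section:assumptions}, in particular the final structural hypothesis on $J_\pm$ and $J_0$ (that on each component interval $H^2-G$ is either identically zero with $H$ bounded away from zero, or nowhere zero and bounded away from zero at the endpoints).  These do \emph{not} follow from condition~$(M)$ together with $f''>0$ and $m_a=m_b=0$; they are genuinely extra, and Theorem~\ref{thm:alternate4} is stated precisely so as to dispense with them.  Your ``quick check that the structural assumptions \ldots\ remain available'' therefore cannot succeed in general, and invoking Theorem~\ref{thm:main} as a black box is illegitimate here.  Second, even granting those assumptions, the pointwise domination you propose is doubtful: the functions $W_\pm$ carry $(H\pm\sqrt{H^2-G})$ in the denominator, and Proposition~\ref{prop:integrability} guarantees only integrability of $|W_\pm'|+|W_\pm r_\pm'|$ on $J_\pm$, not a pointwise bound by $U(x)f''(x)^{-1}M(x)^{-3}(1+\sqrt{f''(x)}M(x))(\cdots)$.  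The paper's route sidesteps both issues by never generating these terms in the first place.

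Your treatment of the $\mathcal{D}(x)+O(\Delta_1(x))$ simplification is essentially the right idea (read off the branches once $m_x=0$ forces $f''(x)\le\|f'(x)\|$ in the noninteger case and $f''(x)\le 1$ in the integer case), and this part is independent of the $\Delta_4$ issue.
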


Using Theorem \ref{thm:alternate4}, we may now prove Corollary \ref{thm:karatsubakorolevimprovement}.

For Corollary \ref{thm:karatsubakorolevimprovement} recall that the conditions are that $f(x)$ and $g(x)$ are real-valued functions, $f(x)$ four times continuously differentiable and $g(x)$ two times continuously differentiable on $[a,b]$.  Also,
\[
f''(x) \gg T/M^2 \qquad f^{(2+r)}(x)\ll T/M^{2+r} \qquad g^{(r)}(x) \ll U/M^r
\]
for $r=0,1,2$ on $[a,b]$, for constants $T\gg 1$, $M=b-a\gg 1$, and $U$.

First, we wish to remove from consideration the case when 
\begin{equation}\label{eq:tooclosetointeger}
0< \|f'(\mu)\| < \sqrt{f''(\mu)}
\end{equation} 
for $\mu$ equal to $a$ or $b$.  

Suppose that not only is \eqref{eq:tooclosetointeger} satisfied for both $a$ and $b$, but that $\llbracket f'(a) \rrbracket = \llbracket f'(b) \rrbracket$---that is, the nearest integer to $f'(a)$ and $f'(b)$ are the same.  In this case, we have by the mean value theorem that $f'(b)-f'(a)=(b-a)f''(\xi)$ for some $\xi \in [a,b]$.  But by our assumption on the size of $\|f'(a)\|$ and $\|f'(b)\|$, we have that $f'(b)-f'(a) \ll \sqrt{T}/M$.  Combining these we have that $b-a \ll f''(\xi)^{-1} \sqrt{T}/M \ll M/\sqrt{T}$.  Thus we have
\[
\sideset{}{^*}\sum_{a\le n \le b} g(n)e(f(n)) = O\left( \frac{UM}{\sqrt{T}} \right).
\]
We can add in the sum
\[
\sideset{}{^*}\sum_{f'(a)\le r \le f'(b)} \frac{g(x_r)}{\sqrt{f''(x_r)}}e(f(x_r)-rx_r+1/8)
\]
to the right-hand side, since this sum by assumption has at most one term, which is of size $UM/\sqrt{T}$.  Thus the corollary is proved in this case, since $U/\sqrt{T}$ ad $UM/T^{3/2}$ are bounded by $UM/\sqrt{T}$.

If $\llbracket f'(a) \rrbracket$ does not equal $\llbracket f'(b) \rrbracket$, then we may replace an endpoint $\mu$ where \eqref{eq:tooclosetointeger} holds with $\mu'$ the closest value to $\mu$ inside the interval $[a,b]$ where $\|f'(\mu')\| = \sqrt{f''(\mu')}$ or $\|f'(\mu')\|=0$; by the same argument as the previous paragraph, doing this generates an error of size $O(UM/\sqrt{T})$.  The only way we could not do this is if $\llbracket f'(a) \rrbracket +1 = \llbracket f'(b) \rrbracket$ and $f''(x)$ is at least $1/4$ at some point $x$ on $[a,b]$, in which case, $f'(b)-f'(a) \le 1 \ll \sqrt{T}/M$,  so that $b-a \ll M/\sqrt{T}$ and the corollary follows in this case by the same argument as in the last paragraph.

Thus it suffices now to prove Corollary \ref{thm:karatsubakorolevimprovement} in the case where $\|f'(\mu)\|\ge \sqrt{f''(\mu)}$ or $\|f'(\mu)\|=0$ for $\mu$ equal to $a$ and $b$.  The conditions of Theorem \ref{thm:alternate4} hold with $M(x) = b-a$ on $[a,b]$.  The size of the constants in condition $(M)$ occur in the bounds on the derivatives of $f$ and $g$ in terms of the constants $T$, $M$, and $U$, and $U(x)$ just equals $U$.

Now, we go through the error terms of Theorems \ref{thm:main} and \ref{thm:alternate4}.  First, we have that
\begin{align*}
\mathcal{D}(\mu)+O(\Delta_1(\mu)) &= 
\begin{cases}
\begin{aligned}&g(\mu)e(f(\mu)+\left\llbracket f'(\mu)\right\rrbracket \mu)\times \\ & \quad\times \left(-\dfrac{1}{ 2\pi i \langle f'(\mu) \rangle} + \psi(\mu,\langle f'(\mu) \rangle)\right) \end{aligned}& \|f'(\mu) \|  \ge \sqrt{f''(\mu)} \\
\begin{aligned}&\dfrac{g(\mu)f^{(3)}(\mu)e(f(\mu))}{6\pi i f''(\mu)^2}-\dfrac{g'(\mu)e(f(\mu))}{2\pi i f''(\mu)}\\ &\quad+ O\left(\dfrac{UM}{T^2}\right)\end{aligned} & \| f'(x) \| =0
\end{cases}
\end{align*}

For the $\Delta_2(x)$ terms, we use that $M(x)=b-a=M$ and $f''(x) \asymp T/M^2$ to obtain
\begin{align*}
\Delta_2(\mu)&\ll \frac{UM}{T^2}\left(1+\sqrt{T}\right)\left(1+\frac{T}{M^2}\right)\\
&\qquad + \frac{U}{M\|f'(\mu)\|^2} +\frac{UT}{M^2\|f'(\mu)\|^3}\\
&\ll \frac{UM}{T^{3/2}}+ \frac{U}{M\|f'(\mu)\|^2} +\frac{UT}{M^2\|f'(\mu)\|^3}.
\end{align*}

For the $\Delta_3(x)$ terms, we again need to understand $\overline{a}$ and $\overline{b}$.  In the proof of Corollary \ref{cor:iwakowimprovement}, we used the fact that $f''$ and $M$ were generally quite large to get good bounds.  Here instead, when $f''$ is small, we shall show that $\overline{a}-a$ and $b-\overline{b}$ are going to be very large most of the time.  In particular, by the mean value theorem, for some $\xi \in [a,b]$ we have $\overline{a}-a =(f'(\overline{a})-f'(a))/f''(\xi) \gg \|f'(a)\| M^2/T$ since $f'(\overline{a})$ must be an integer.

Thus for $\Delta_3(a)$, we have the following bound.
\begin{align*}
 \Delta_3(a) &= \int_{\overline{a}}^{b} \frac{g(x)}{f''(x)(x-a)^3}\left(1+\frac{1}{f''(x)M(x)}+\frac{1}{f''(x)(x-a)}  \right) \ dx \\
&\qquad + \frac{g(\overline{a})}{f''(\overline{a})^2(\overline{a}-a)^3} +\frac{g(b)}{f''(b)^2(b-a)^3}\\
&\ll \frac{UM^2}{T(\overline{a}-a)^2}\left(1+\frac{M}{T} + \frac{M^2}{T(\overline{a}-a)}\right)+ \frac{UM^4}{T^2(\overline{a}-a)^3}  +\frac{UM}{T^2}\\
&\ll \frac{UT}{M^2\|f'(a)\|^2}\left( 1+ \frac{M}{T}+\frac{1}{\|f'(a)\|}\right) + \frac{UM}{T^2}\\
&\ll \frac{U}{M\|f'(a)\|^2}+ \frac{UT}{M^2\|f'(a)\|^3} + \frac{UM}{T^2}
\end{align*}
A similar bound holds for $\Delta_3(b)$.

Now for the last term $\Delta_4$, we have the following much simpler calculation.
\begin{align*}
 \Delta_4&= \int_a^b \frac{g(x)}{f''(x)M(x)^3} \left(1+\sqrt{f''(x)}M(x)\right) \left( 1+ \frac{1+|M'(x)|}{f''(x)M(x)}\right) \ dx\\
&\ll \int_a^b \frac{U}{TM}\left(1+\sqrt{T}\right) \left( 1+\frac{M}{T}\right) \ dx \\
&\ll \frac{U}{\sqrt{T}}\left(1+ \frac{M}{T}\right)
\end{align*}
This completes the proof of Corollary \ref{thm:karatsubakorolevimprovement}.

We conclude with two additional theorems which extend the results of Theorem \ref{thm:main} in specific directions.  The following theorem shows that if the various error terms from Theorem \ref{thm:main} are nicely bounded as $b$ tends to infinity, then many of them can be replaced by a constant plus a $o(1)$ error.

\begin{thm}\label{thm:toinfinity}

Assume all the conditions of Section \ref{section:assumptions} hold with $a$ fixed and $b$ tending to infinity. 

Let $K_b$ be the set of all $x\in[a,b]$ such that $x+M(x)>b$, where $M(x)$ is as in condition $(M)$.  Let $\partial S$ for a set $S$ have the usual meaning of the boundary of the set (the endpoints of all intervals and the isolated points).  

Suppose we have the following functions
\begin{align*}
\Delta'_3(b) &= \frac{U(b)}{f''(b)^2(b-a)^3} + \frac{U(b)}{f''(b)^2M(b)^3}\left(1+\sqrt{f''(b)}M(b)\right)\\
\Delta'_4(b) &=  \int_{K_b\cap [a,\overline{b}]} \frac{U(x)}{f''(x)(b-x)^3}\left(1+\frac{1}{f''(x)M(x)}+\frac{1}{M(x)(b-x)}\right) \ dx \\
&\qquad\qquad + \sum_{x\in \partial (K_b\cap[a,\overline{b}])} \frac{U(x)}{f''(x)^2(b-x)^3}+\Delta_2(b)\\
&\qquad \int_{K_b} \frac{U(x)}{f''(x) M(x)^3}\left( 1+\sqrt{f''(x)}M(x) \right)  \left(1+ \frac{1+|M'(x)|}{f''(x) M(x)}  \right)\ dx\\
&\qquad \qquad + \sum_{x\in \partial K_b} \frac{U(x)}{f''(x)^2M(x)^3}\left(1+\sqrt{f''(x)}M(x)\right),
\end{align*} 
and that $\Delta_2(b)$, $\Delta'_3(b)$, and $\Delta'_4(b)$ all tend to $0$ for some sequence of $b$'s tending to $\infty$ (The sequence may be different for different functions).  The function $\Delta_2(x)$ and the value of $\overline{b}$ are as defined in Theorem \ref{thm:main}.  In addition, assume that the sums and integrals in $\Delta_3(a)$, $\Delta_4(x)$, and the $\mathcal{K}$ terms all converge as $b$ tends to infinity in Theorem \ref{thm:main}.

Then 
\[
\sideset{}{^*}\sum_{a\le n \le b} g(n) e(f(n)) = \sideset{}{^*}\sum_{f'(a)\le r \le f'(b)} \frac{g(x_r)e(f(x_r)-rx_r+\frac{1}{8})}{\sqrt{f''(x_r)}}-\mathcal{D}(b)+c+\Delta
\]
as $a$ remains fixed and $b$ tends to infinity, where $c$ is some constant and
\begin{align*}
\Delta &= O(\Delta_1(b)+\Delta_2(b)+\Delta'_3(b)+ \Delta'_4(b)+\Delta_5)\\
\Delta_5 &= \int_{b}^{\infty} \frac{U(x)}{f''(x)(x-a)^3}\left(1+\frac{1}{f''(x)M(x)}+\frac{1}{f''(x)(x-a)}  \right)\\
&\qquad + \int_b^\infty \frac{U(x)}{f''(x)M(x)^3} \left(1+\sqrt{f''(x)}M(x)\right) \left( 1+ \frac{1+|M'(x)|}{f''(x)M(x)}\right) \ dx\\
&\qquad +  \mathcal{K}(J_0[b,\infty),W_0,r_0)+ \mathcal{K}(J_\pm[b,\infty),W_+,r_+) + \mathcal{K}(J_\pm[b,\infty),W_-,r_-)\\
&\qquad + \sum_{x \in J_{null}[b,\infty)} \frac{g''(x)^2}{g'(x)f''(x)^2}
\end{align*}
and all other functions are as in Theorem \ref{thm:main}.  The sets $J_0[b,\infty)$, $J_\pm[b,\infty)$, and $J_{null}[b,\infty)$ all refer to the similarly named sets from Section \ref{section:assumptions} corresponding to the set $[b,\infty)$ instead of the set $[a,b]$.

The implicit constants in the big-O terms are dependent upon the constants $C_2$, $C_{2^-}$, $C_4$, $D_0$, $D_1$, $D_2$, $\delta$ in condition $(M)$.

An analogous result holds if $b$ is fixed and $a$ tends to $-\infty$.
\end{thm}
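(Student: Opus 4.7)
The plan is to apply Theorem~\ref{thm:main} on each finite interval $[a,b]$, yielding
\[
\sideset{}{^*}\sum_{a\le n\le b} g(n)e(f(n)) = T(b) - \mathcal{D}(b) + \mathcal{D}(a) + \sum_{i=1}^{3}O(\Delta_i(a)+\Delta_i(b)) + \Delta_4,
\]
where $T(b)$ denotes the transformed sum over $r$. Since $a$ is fixed, $\mathcal{D}(a)$, $O(\Delta_1(a))$, and $O(\Delta_2(a))$ contribute only bounded, $b$-independent quantities that can be absorbed into the constant $c$. The remaining task is to rewrite $\Delta_3(a)$, $\Delta_3(b)$, and $\Delta_4$ as a constant plus $O(\Delta_1(b)+\Delta_2(b)+\Delta'_3(b)+\Delta'_4(b)+\Delta_5)$.

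\textbf{Extending integrals and sums to infinity.} The integrals $\int_{\overline{a}}^{b}$ in $\Delta_3(a)$ and $\int_a^b$ in $\Delta_4$, together with the three $\mathcal{K}$-functionals and the $J_{null}$-sum, all converge as $b\to\infty$ by hypothesis. I will split each as (the value on $[a,\infty)$) minus (the tail on $[b,\infty)$): the infinite pieces contribute only to $c$, while the tails are precisely the summands collected in $\Delta_5$. The endpoint $\frac{U(\overline{a})}{f''(\overline{a})^2(\overline{a}-a)^3}$ is constant, while $\frac{U(b)}{f''(b)^2(b-a)^3}$ coincides with the first summand of $\Delta'_3(b)$. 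For the $\mathcal{K}$-functionals one should check that any extra boundary term introduced at the splitting point $x=b$ is of the form $|s(b)W(b)|$ and, being of the same shape as the sign-change/boundary sums already present in the tail, is absorbed into $\Delta_5$.

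\textbf{Decomposing $\Delta_3(b)$.} This is the central step. I would split the integral in $\Delta_3(b)$ as $\int_{K_b\cap[a,\overline b]}+\int_{[a,\overline b]\setminus K_b}$. On $K_b$ one has $b-x<M(x)$, and after absorbing constants from condition~$(M)$ the first piece is dominated by the first integral in $\Delta'_4(b)$. On the complement, $b-x\ge M(x)$, so $(b-x)^{-k}\le M(x)^{-k}$ for $k=1,3$ and the $\Delta_3(b)$-integrand is bounded by a constant multiple of the $\Delta_4$-integrand, which is then handled by the previous paragraph. The endpoint $\frac{U(\overline{b})}{f''(\overline{b})^2(b-\overline{b})^3}$ either lies in $\partial(K_b\cap[a,\overline b])$ (hence inside $\Delta'_4(b)$) or satisfies $b-\overline{b}\ge M(\overline{b})$, in which case it is dominated by a $\Delta'_4(b)$ endpoint contribution; the residual $\frac{U(a)}{f''(a)^2(b-a)^3}$ is $O(b^{-3})$ and is controlled by $\Delta_5$.

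\textbf{Identifying $c$ and the main obstacle.} Collecting all the constant contributions above defines $c$; its value will be pinned down by evaluating the identity at a sequence $b_k\to\infty$ along which $\Delta_2(b_k)$, $\Delta'_3(b_k)$, $\Delta'_4(b_k)$ all tend to $0$, and passing to the limit. The main technical obstacle I anticipate is the bookkeeping of the $\mathcal{K}$-functionals when their defining sets $J_\pm$ and $J_0$ are cut at $x=b$, since each $\mathcal{K}$ carries multiple sums over isolated points and sign-change points; one must verify in each case that the boundary terms newly created at $x=b$ lie inside $\Delta'_4(b)$ or inside the tail collected in $\Delta_5$. The analogous result for $a\to-\infty$ with $b$ fixed will follow by the obvious symmetry in Theorem~\ref{thm:main}.
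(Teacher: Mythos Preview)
There is a genuine gap. By invoking Theorem~\ref{thm:main} as a black box you obtain only that the total error is $O(\Delta_3(a))+O(\Delta_4)+\cdots$; you do not obtain the error itself. Splitting the \emph{bound} $\Delta_4=\int_a^b(\cdots)\,dx$ as $\int_a^\infty-\int_b^\infty$ does not show that the \emph{actual} quantity bounded by $\Delta_4$ equals a constant minus a tail: a function that is $O$ of a convergent integral need not itself converge. The same objection applies to your handling of $\Delta_3(a)$ and the $\mathcal{K}$-functionals, and your final paragraph does not escape it, since the ``constant contributions'' you collect are pieces of bounds, not pieces of the error.

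The paper's proof avoids this by reopening the proof of Theorem~\ref{thm:main} and tracking the concrete quantities that the $\Delta_i$ were introduced to bound. For example, the $\Delta_3$ and $\Delta_4$ terms arise from an actual sum $\sum_{a<x_r<b}\bigl(\mathcal{E}_r(\alpha_r)+\mathcal{E}_r(\beta_r)\bigr)$ of specific complex numbers coming from Proposition~\ref{prop:main1}. Each $\mathcal{E}_r(\alpha_r)$ is eventually independent of $b$, so that half of the sum is a genuinely convergent series (by the hypothesis on $\Delta_3(a)$) and hence equals a constant plus a tail bounded by $\Delta_5+\Delta_3'(b)$. The $\beta_r$-half is subtler because $\beta_r=\min\{x_r+M(x_r),b_r\}$ still depends on $b$ exactly when $x_r\in K_b$; the paper replaces those finitely many terms by their $b$-independent counterparts at a cost of $O(\Delta'_4(b))$, after which the same convergence argument applies. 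Your $K_b$-splitting of $\Delta_3(b)$ is the right geometric picture, but it only becomes a proof once it is carried out on the underlying sums $\sum_r\mathcal{E}_r(\cdot)$ rather than on their upper bounds.
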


Much of the accuracy gleaned from Theorems \ref{thm:main} and \ref{thm:alternate4} comes when $f''(x)$ is small.  If $f''(x)$ is large, instead, then $\mathcal{D}(x)$ shifts from being an explicit term to a big-O term.  However, with additional work it can be made somewhat explicit.

\begin{thm}\label{thm:refinement}
 Assume the conditions of Section \ref{section:assumptions} hold.  Suppose that $M(a), f''(a)\ge 1$ and let $C$ and $L$ be real numbers satisfying
\[
f''(a)^{-1/2}\ll C< M(a)\qquad \text{and} \qquad \sqrt{f''(a)} \ll L< f''(a)\cdot\min\{1,c\}.
\]
Also let $\epsilon=\langle a\rangle $ and $\epsilon'=\langle f'(a)\rangle $.  

Then we may replace 
\[
\mathcal{D}^\circ (a)=O\left(U(a)\left( 1+ \dfrac{1}{M(a)}+\dfrac{1}{\sqrt{f''(a)}M(a)} \right)\right) 
\]
 in the statement of Theorem \ref{thm:main} or \ref{thm:alternate4} with
\begin{equation}
\mathcal{D}^\circ (a)= \mathcal{D}_0(a)+O\left( \frac{U(a)f''(a)C^4L}{M(a)}+ \frac{U(a)L}{f''(a)C}+ \frac{U(a)f''(a)}{L^2}  +  \frac{U(a)}{f''(a)C^2}+\frac{U(a)}{M(a)}\right),
\end{equation}
where $\mathcal{D}_0(a)$ can be estimated multiple different ways.
\begin{itemize}
 \item If $\epsilon=0$, then  
\[
 \mathcal{D}_0(a) = O\left(  \frac{U(a)|\epsilon'|L}{f''(a)}  + \frac{U(a)|\epsilon'|(1+|\epsilon'|C)\log(1+f''(a))}{\sqrt{f''(a)}}  + U(a)f''(a)|\epsilon'|^3C^4 \right).
\]

 \item If $\epsilon'=0$, then
\[
\mathcal{D}_0(a) = \psi(a)g(a)e(f(a)-f'(a)a).
\]

 \item If $\epsilon'=0$ and $|\epsilon|>C$, then 
\[
 \mathcal{D}_0(a) = O\left( \frac{U(a)}{(|\epsilon|-C) \sqrt{f''(a)}}\right).
\]

\end{itemize}

Suppose that $\epsilon'=0$ and $M(a) \le f''(a)^7$.  We can then optimize the above results, so that $\mathcal{D}^\circ(a)$ equals
\[
\begin{cases}
\begin{aligned}&\psi(a)g(a)e(f(a)-f'(a)a)\\
&\qquad + O(U(a)f''(a)^{1/3}|\epsilon|^{2/3})\\
&\qquad + O\left(\dfrac{U(a)}{M(a)^{2/15}f''(a)^{1/15}}+\frac{U(a)}{M(a)}  \right)
\end{aligned} &  |\epsilon|\le f''(a)^{-1/2} \\
\begin{aligned} &O\left( \dfrac{U(a)}{f''(a)^{1/3}|\epsilon|^{2/3}}\right)\\
&\qquad +O\left(\dfrac{U(a)}{M(a)^{2/15}f''(a)^{1/15}}+\frac{U(a)}{M(a)}\right)
\end{aligned}  &         |\epsilon|\ge f''(a)^{-1/2}
\end{cases}.
\]

In all cases, the implicit constants depend on the constants associated to condition $(M)$ and on the implicit constants in the bounds on $C$ and $L$.  This theorem holds true if $a$ is everywhere replaced by $b$.
\end{thm}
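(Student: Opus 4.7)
The plan is to refine the proof of Theorems \ref{thm:main} and \ref{thm:alternate4} at the endpoint $a$ so as to extract an explicit leading term from $\mathcal{D}^\circ(a)$ when $f''(a) \ge 1$. In the main proof, the bound $O(U(a)(1 + 1/M(a) + 1/(\sqrt{f''(a)} M(a))))$ arose by taking absolute values of the endpoint contributions from the stationary-phase integrals near $a$, thereby swallowing the cancellation against the first-order integration-by-parts boundary terms. The role of $C$ (a length scale in $x$, $f''(a)^{-1/2} \ll C < M(a)$) and $L$ (a cutoff in $r$, $\sqrt{f''(a)} \ll L < f''(a)$) is to supply two independent thresholds so that these contributions can be analyzed separately.

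I would begin from the Poisson-summation representation
\[
\sideset{}{^*}\sum_{a \le n \le b} g(n) e(f(n)) = \lim_{R \to \infty} \sum_{|r| \le R} \int_a^b g(x) e(f(x) - rx) \, dx
\]
and split each integral at $a + C$. On $[a+C, b]$ the stationary-phase and integration-by-parts techniques of the main theorem apply without change, producing boundary terms at $a + C$ which cancel against matching terms arising from $[a, a+C]$. On $[a, a+C]$ I would split the $r$-sum into $|r - f'(a)| \le L$ and $|r - f'(a)| > L$. For $|r - f'(a)| > L$ two applications of integration by parts bound each integral by $O(U(a) f''(a)/|f'(a)-r|^3)$ (summing to the $U(a) f''(a)/L^2$ term), while the leading first-order boundary contribution summed via Proposition \ref{prop:fouriertail} supplies the sawtooth piece of $\mathcal{D}_0(a)$. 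For $|r - f'(a)| \le L$ I would Taylor expand $f$ to order three and $g$ to order one, replacing each integral by the Fresnel-type model
\[
g(a) e(f(a) - ra) \int_0^C e\bigl((f'(a) - r) u + \tfrac{1}{2} f''(a) u^2\bigr) \, du,
\]
with remainders controlled via condition $(M)$.

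Summing this model across $|r - f'(a)| \le L$ and applying Poisson summation in $r$ (shifted by $\llbracket f'(a) \rrbracket$) converts the $r$-sum into an $n$-sum; the $n = 0$ term produces $\psi(a) g(a) e(f(a) - f'(a) a)$ in the $\epsilon' = 0$ case, and more generally $\psi(a, \langle f'(a)\rangle)$ when $\epsilon' \ne 0$, while higher $n$ terms get absorbed into errors. The listed error contributions $U(a) f''(a) C^4 L / M(a)$, $U(a) L/(f''(a) C)$, $U(a)/(f''(a) C^2)$, and $U(a)/M(a)$ arise respectively from the cubic Taylor remainder of $f$ applied across $L$ summands, the linear remainder of $g$, the variation of $f''$ across $[a, a+C]$, and the background truncation. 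The optimized estimates in the final part of the theorem follow by balancing: taking $L \asymp f''(a)^{2/3} |\epsilon|^{-2/3}$ with a compatible $C$ gives the $O(U(a)/(f''(a)^{1/3} |\epsilon|^{2/3}))$ bound when $|\epsilon| \ge f''(a)^{-1/2}$, and a slightly different choice saturating the stationary-phase scale handles the range $|\epsilon| \le f''(a)^{-1/2}$.

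The main obstacle is the delicate bookkeeping of cancellations between the stationary-phase boundary terms at $a + C$ and their counterparts from the Taylor-expanded integrals, so that what remains is exactly the claimed $\psi$-term with coefficient $g(a) e(f(a) - f'(a) a)$. A secondary difficulty is the application of Poisson summation to convert the $r$-sum over $|r - f'(a)| \le L$ into an $n$-sum and isolate the $n = 0$ contribution: the model Fresnel integral has only short support $C$, whereas the identity producing $\psi(a)$ classically relies on a slowly decaying sequence, and reconciling these features is where the $U(a) L/(f''(a) C)$ error appears and has to be shown to be of exactly the size stated.
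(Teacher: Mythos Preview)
Your overall architecture---introduce a short length scale $C$ near $a$, an $r$-cutoff $L$, Taylor-expand to a quadratic model on the short interval, and handle the far-away $r$'s by integration by parts---matches the paper.  But the mechanism you propose for producing the explicit term $\psi(a)g(a)e(f(a)-f'(a)a)$ is not the one that actually works, and this is a genuine gap.

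In the paper, the $\psi(a)$ term does \emph{not} come from Poisson-summing the Fresnel model integrals.  It comes from the first-order endpoint contributions of integration by parts: for $|r-f'(a)|\ge L$ (and also from the Type II pieces on $[a_r,a_r']$) one keeps the leading boundary term
\[
\frac{g(a)}{2\pi i(f'(a)-r)}e(f(a)-ra)
\]
explicit rather than absorbing it into $O(U(a)f''(a)/|f'(a)-r|^3)$.  When $\epsilon'=0$ these terms, summed over all nonzero $r-f'(a)\in\mathbb{Z}$, are exactly the Fourier series for $\psi(a)$; the missing range $|r-f'(a)|<L$ is then filled in at cost $O(U(a)|\epsilon|L)$.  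The Type~I (Fresnel-model) integrals themselves are shown to be \emph{small}---they contribute only error terms, obtained by pairing $r$ with $-r$ after the symmetric shift $a_r'=a\pm C$ (sign chosen so the stationary point moves away), which produces factors $\sin(2\pi\epsilon r)$ or $\sin(2\pi\epsilon' x)$ and allows the first/second derivative tests.  Your one-sided split at $a+C$ does not set up this pairing, and your Poisson-in-$r$ step on a finite sum of Fresnel integrals is not how the $n=0$ term isolates $\psi(a)$; at best you would be re-deriving the Fourier expansion of $\psi$ by a harder route, and your own ``secondary difficulty'' paragraph correctly flags that this does not close.

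A smaller point: your optimizing choice $L\asymp f''(a)^{2/3}|\epsilon|^{-2/3}$ does not balance the errors.  In the paper, for $|\epsilon|\ge f''(a)^{-1/2}$ one takes $C\asymp|\epsilon|$ and $L=f''(a)^{2/3}|\epsilon|^{1/3}$ (balancing $U(a)/((|\epsilon|-C)\sqrt{f''(a)})$ against $U(a)f''(a)/L^2$ and $U(a)L/(f''(a)C)$), which yields the stated $O(U(a)f''(a)^{-1/3}|\epsilon|^{-2/3})$.
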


\begin{rem}
The results of Theorems \ref{thm:toinfinity} and \ref{thm:refinement} can be combined to provide a more explicit $\mathcal{D}(b)$ term as $b$ tends to infinity.
\end{rem}

%%%%%%%%%%%%%%%%%%%%%%%%
\section{Necessary lemmas and useful propositions}\label{sec:lemstationaryphase}
%%%%%%%%%%%%%%%%%%%%%%%%

It is crucial to the van der Corput transform to have very good evaluations of integrals that contain stationary phase points.  The most powerful results known to the author are those of Redouaby and Sargos in \cite{redsar}.

\begin{lem}\label{lem:redsar1}(Lemma 2 in \cite{redsar})

Consider a function $f:[a,b]\to \mathbb{R}$ that is $C^4$, and postive real numbers $T,$ $ M,$ $C_2,$ $ C_{2^-},$ $ C_3,$ $ C_4,$ $ C_M$ such that 
$$\begin{array}{rl}
f''(z) \ge C_{2^-}^{-1}TM^{-2} & \text{ for }a\le z \le b \\
f^{(j)}(z) \le C_j TM^{-j} & \text{ for } a\le z \le b \text{ and } j=2,3,4\\
b-a \le C_M M & \\
f'(c)=0 & \text{ for some } c\text{ in }[a,b]
\end{array}$$
Moreover, let $\delta < 1$ be positive, and define $$\beta:= \frac{3\delta}{C_{2^-}C_3}.$$

If $0< c-a \le \beta M$, then
\begin{align*}
\int_a^c e(f(x)) \ dx &= \frac{e(f(c)+1/8)}{2\sqrt{f''(c)}} - \frac{f^{(3)}(c)e(f(c))}{6\pi i f''(c)^2} - \frac{e(f(a))}{2\pi i f'(a)}\\
&\qquad + O\left(\frac{M^4}{T^2(c-a)^3} \right) + O\left( \frac{M}{T^{3/2}} \right)
\end{align*}
where
$$\alpha=\frac{1}{2}f''(c) \qquad \text{and}\qquad \psi(t)=\text{sgn}(t)\left(\frac{f(c+t)-f(c)}{\alpha}\right)^{1/2}.$$

If $0< b-c \le \beta M$, then
\begin{align*}
\int_c^b e(f(x)) \ dx &= \frac{e(f(c)+1/8)}{2\sqrt{f''(c)}} + \frac{f^{(3)}(c)e(f(c))}{6\pi i f''(c)^2}+\frac{e(f(a))}{2\pi i f'(a)}\\
&\qquad + O\left(\frac{M^4}{T^2(b-c)^3} \right) + O\left( \frac{M}{T^{3/2}} \right)
\end{align*}

The implicit constant is dependent upon the $C$'s and $\delta$.
\end{lem}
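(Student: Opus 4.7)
The plan is to reduce the oscillatory integral to a Fresnel-type integral via the change of variables $u=\psi(t)$ supplied in the statement and then extract the main terms one at a time, using an exact Fresnel evaluation and integration by parts. Translating $t=x-c$ and substituting $u=\psi(t)$ gives
\begin{equation*}
\int_a^c e(f(x))\,dx \;=\; e(f(c))\int_{\psi(a-c)}^{0} e(\alpha u^2)\,\bigl(1+\phi(u)\bigr)\,du,
\end{equation*}
where $\phi(u):=\tfrac{dt}{du}-1$. Before proceeding I must verify that $\psi$ is a genuine $C^3$ diffeomorphism on $[a-c,0]$: the lower bound $f''\ge T/C_{2^-}M^2$ together with $|a-c|\le\beta M$ ensures that $f(c+t)-f(c)$ is a smooth non-degenerate square on this interval. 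Differentiating $\alpha\psi(t)^2=f(c+t)-f(c)$ successively yields the normalizations $\psi'(0)=1$ (hence $\phi(0)=0$) and $\psi''(0)=f^{(3)}(c)/(3f''(c))$ (hence $\phi'(0)=-f^{(3)}(c)/(3f''(c))$), together with pointwise bounds $\psi''=O(T/M^3)$ and $\psi'''=O(T/M^4)$ throughout the interval.

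Next I split the $u$-integral into three contributions. The exact Fresnel identity $\int_{-\infty}^0 e(\alpha u^2)\,du = e(1/8)/(2\sqrt{2\alpha})$ supplies the leading term $e(f(c)+1/8)/(2\sqrt{f''(c)})$. The linear-in-$u$ part of $\phi$ integrates exactly to
\begin{equation*}
\phi'(0)\int_{\psi(a-c)}^{0} u\,e(\alpha u^2)\,du \;=\; \phi'(0)\cdot\frac{1-e(f(a)-f(c))}{4\pi i\alpha},
\end{equation*}
whose ``$1$'' half contributes exactly the stated correction $-f^{(3)}(c)e(f(c))/(6\pi i f''(c)^2)$ after multiplication by $e(f(c))$. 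The remaining piece $\int_{\psi(a-c)}^0 e(\alpha u^2)\,du - \int_{-\infty}^0 e(\alpha u^2)\,du$ is handled by integration by parts via $e(\alpha u^2)\,du = d(e(\alpha u^2))/(4\pi i\alpha u)$, producing the boundary value $e(f(a)-f(c))/(4\pi i\alpha\psi(a-c))$. Using the exact identity $2\alpha\psi(a-c)\psi'(a-c)=f'(a)$, this rewrites as $e(f(a)-f(c))\psi'(a-c)/(2\pi i f'(a))$, and linearizing $\psi'(a-c)=1+\psi''(0)(a-c)+\cdots$ reproduces the stated endpoint term $-e(f(a))/(2\pi i f'(a))$. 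The key observation is that the $\psi''(0)(a-c)$ piece of this linearization cancels exactly, at leading order, against the ``$e(f(a)-f(c))$'' half of the $\phi'(0)u$ integral once one invokes $f'(a)\asymp f''(c)(a-c)$, so no spurious $f^{(3)}(c)e(f(a))$ term survives.

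The main obstacle is the error bookkeeping. Three sources must be controlled: (i) the IBP residual $(1/4\pi i\alpha)\int_{-\infty}^{\psi(a-c)}e(\alpha u^2)/u^2\,du$, which after a second IBP is dominated by its boundary value $1/(\alpha|\psi(a-c)|^3)\asymp M^4/(T^2(c-a)^3)$; (ii) the nonlinear remainder $\int_{\psi(a-c)}^0(\phi(u)-\phi'(0)u)e(\alpha u^2)\,du$, which must be integrated by parts against $e(\alpha u^2)\,du$ and then bounded using the pointwise estimates $\phi(u)-\phi'(0)u=O(u^2/M^2)$ and analogous bounds on derivatives, yielding $O(M/T^{3/2})$; and (iii) the higher-order discrepancy in the linearization of $\psi'(a-c)$, whose contribution is absorbed into (i) and (ii) once $|f'(a)|\asymp T(c-a)/M^2$ is inserted. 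The companion estimate for $\int_c^b e(f(x))\,dx$ is obtained symmetrically, noting that for $t>0$ the function $\psi$ takes positive values, which accounts for the sign flips in the three main-term coefficients.
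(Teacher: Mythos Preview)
The paper does not supply a proof of this lemma at all: it is quoted verbatim as Lemma~2 of Redouaby--Sargos \cite{redsar}, with only a remark afterward noting that it is a one-sided form of Huxley's stationary-phase result and a corollary of the more general Lemma~6 in \cite{redsar}. So there is no ``paper's own proof'' to compare against.

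That said, your sketch is essentially the standard argument, and it is the one Redouaby--Sargos (and Huxley before them) use: the substitution $u=\psi(t)$ straightens the phase to a pure quadratic, the full Fresnel integral $\int_{-\infty}^0 e(\alpha u^2)\,du$ supplies the leading term, integration by parts on the tail $\int_{-\infty}^{\psi(a-c)}$ extracts the endpoint contribution $-e(f(a))/(2\pi i f'(a))$ via the identity $2\alpha\psi\psi'=f'(c+t)$, and the Jacobian correction $\phi(u)=dt/du-1$ is Taylor-expanded to produce the $f^{(3)}(c)$ term plus a remainder controlled by one further IBP. The cancellation you flag between the $\psi''(0)(a-c)$ piece of the linearized $\psi'(a-c)$ and the $e(f(a)-f(c))$ half of the $\phi'(0)u$ integral is real and is exactly the mechanism that keeps the endpoint term clean.

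One minor slip: your stated bounds $\psi''=O(T/M^3)$ and $\psi'''=O(T/M^4)$ are dimensionally off. Since $\psi(t)\sim t$ and the derivative ratios $f^{(j+1)}/f''$ scale like $M^{-(j-1)}$, the correct orders are $\psi''=O(1/M)$ and $\psi'''=O(1/M^2)$ (equivalently, $\phi(u)=O(u/M)$, $\phi'(u)=O(1/M)$, $\phi''(u)=O(1/M^2)$). This does not affect the structure of your argument, only the bookkeeping when you bound the nonlinear remainder in step~(ii); with the corrected orders the estimate $O(M/T^{3/2})$ still falls out.
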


\begin{rem}
The restriction $\delta<1$ in condition $(M)$ guarantees that $|\psi(t)| \asymp |t|$.  See Lemma 4 in \cite{redsar}.

Also, the above result---which may be seen as a one-sided form of Huxley's result in \cite{huxley2}---is a corollary to a more general result (Lemma 6 in \cite{redsar}), which makes the term $O(M^4/T^2|\mu-c|^3)$ explicit.

Redouaby and Sargos include the additional assumption that $T,M$ are both $\ge 1$; however, this condition is never used in the lemmas we cite, and so may be safely ignored.\footnote{This was confirmed in private correspondence with Sargos.}
\end{rem}

\begin{lem}\label{lem:redsar2}(Lemma 9 in \cite{redsar})

In addition to the hypothesis of the previous lemma, suppose that we have a function $g:[a,b]\to \mathbb{C}$ that is $C^3$, and suppose there exist positive real numbers $U,N,D_0,D_1,D_2$ such that 
$$g^{(j)}(z) \le D_j UN^{-j} \text{ for }a\le z\le b \text{ and }j=0,1,2.$$
Then 
\begin{align*}
\int_a^c g(x)e(f(x)) \ dx &= g(c)\int_a^c e(f(x)) \ dx + \frac{g(c)-g(a)}{2\pi i f'(a)} e(f(a))\\
&\qquad + \frac{g'(c)}{2\pi i f''(c)}e(f(c)) + O\left( \frac{UM^2}{NT^{3/2}} \left( 1+\frac{M}{N}\right)\right)\\ 
\int_c^b g(x)e(f(x)) \ dx &= g(c)\int_c^b e(f(x)) \ dx + \frac{g(b)-g(c)}{2\pi i f'(b)} e(f(b))\\
&\qquad - \frac{g'(c)}{2\pi i f''(c)}e(f(c)) + O\left( \frac{UM^2}{NT^{3/2}} \left( 1+\frac{M}{N}\right)\right)\\ 
\end{align*}

This result still holds if we weaken the assumptions of the previous lemma so that $f$ is only $C^3$ instead of $C^4$, and the condition on $f^{(4)}(x)$ may likewise be ignored.
\end{lem}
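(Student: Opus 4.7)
The plan is to reduce the full integral to one where the amplitude is the constant $g(c)$, producing the first main term $g(c)\int_a^c e(f(x))\,dx$, while the residual is controlled by a single integration by parts whose boundary contributions supply precisely the two explicit corrections in the statement. Write $g(x)=g(c)+(g(x)-g(c))$, so that
\[
\int_a^c g(x)\,e(f(x))\,dx = g(c)\int_a^c e(f(x))\,dx + \int_a^c (g(x)-g(c))\,e(f(x))\,dx,
\]
and work only on the residual from now on.

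For the residual, use $e(f(x))\,dx = d\bigl(e(f(x))\bigr)/(2\pi i f'(x))$ and integrate by parts with $u(x)=(g(x)-g(c))/(2\pi i f'(x))$. The factor $(g(x)-g(c))/f'(x)$ has a removable singularity at the stationary phase point $c$: since $f\in C^3$ and $g\in C^2$, L'H\^opital gives $\lim_{x\to c}(g(x)-g(c))/f'(x) = g'(c)/f''(c)$, and moreover this quotient is $C^1$ on a neighborhood of $c$. The boundary contribution at $x=c$ is therefore $g'(c)e(f(c))/(2\pi i f''(c))$, and at $x=a$ it is $-(g(a)-g(c))e(f(a))/(2\pi i f'(a)) = (g(c)-g(a))e(f(a))/(2\pi i f'(a))$; together these furnish the two explicit terms in the lemma.

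What remains is to bound the integral
\[
\mathcal{R} := \frac{1}{2\pi i}\int_a^c \frac{g'(x) f'(x) - (g(x)-g(c)) f''(x)}{f'(x)^2}\,e(f(x))\,dx
\]
by $O(UM^2 T^{-3/2}N^{-1}(1+M/N))$. The amplitude is the source of the main technical work: by Taylor expanding $g$ and $f'$ around $c$ one checks that the amplitude is bounded pointwise by $O(UM^2/(N^2T))+O(U/(N|f'(x)|))$ on $[a,c]$, with a continuous derivative of size compatible with the bounds on $g''$, $f^{(3)}$ (and at worst $f^{(4)}$, though the final paragraph of the statement observes that this may be avoided). I would then bound $\mathcal{R}$ by splitting at a scale $\ell$ around $c$: on $[c-\ell,c]$ take absolute values and use the pointwise estimate, while on $[a,c-\ell]$ apply Lemma \ref{lem:redsar1} (treating the amplitude as a new slowly varying factor with the single stationary point already removed, so one gains a full factor $T^{-1/2}$). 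Optimizing $\ell$, or more cleanly applying Lemma \ref{lem:redsar1} directly to the residual integral with its refined amplitude, produces the two error pieces $UM^2/(NT^{3/2})$ and $UM^3/(N^2T^{3/2})$.

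The main obstacle is the one in step three: showing rigorously that the amplitude of $\mathcal{R}$ satisfies derivative bounds of the right order uniformly up to $x=c$, so that the improved $T^{-3/2}$ saving (rather than a trivial $T^{-1}$) is available. This requires carefully combining the Taylor remainders of $g$ and $f$ with the fact that $(g(x)-g(c))/f'(x)$ extends smoothly across $c$; once this smoothness is in hand the bound is routine. The proof on $[c,b]$ is identical after a reflection, producing the symmetric statement with sign changes on the $g'(c)$ boundary term and with $f'(b)$ in place of $f'(a)$. Finally, because only one integration by parts is needed and no derivative of $f$ beyond the third appears in the amplitude of $\mathcal{R}$, the closing remark of the lemma follows automatically.
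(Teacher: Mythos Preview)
The paper does not prove this lemma; it is quoted verbatim as Lemma~9 of Redouaby--Sargos \cite{redsar}.  So there is no ``paper's own proof'' to compare against, and I will simply assess your argument.

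Your steps 1--2 are exactly right and are the standard opening: splitting off $g(c)$ isolates the first main term, and one integration by parts on the residual, with $u(x)=(g(x)-g(c))/(2\pi i f'(x))$, has the correct removable singularity at $x=c$ and delivers precisely the two explicit boundary contributions.  Observe also that the integrand of $\mathcal R$ is $h(x)e(f(x))$ with
\[
h(x)=\frac{d}{dx}\!\left(\frac{g(x)-g(c)}{f'(x)}\right),
\]
which is the clean way to see that $h$ extends continuously across $c$.

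The gap is in step 3.  You propose to bound $\mathcal R$ by ``applying Lemma~\ref{lem:redsar1} directly to the residual integral with its refined amplitude,'' but Lemma~\ref{lem:redsar1} is stated for integrals $\int e(f)$ with \emph{no} amplitude; it cannot be invoked on $\int h\,e(f)$.  The splitting-at-scale-$\ell$ idea is also unnecessary and would only recover a weaker bound.  The correct tool, already available in the paper, is the second derivative test (Lemma~\ref{lem:2ndderivtest}): it requires only the total variation plus the sup norm of $h$ on $[a,c]$, and immediately gives a factor $(\lambda)^{-1/2}\asymp M T^{-1/2}$.

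So what remains is a pure size estimate.  Writing $\phi(x)=(g(x)-g(c))/f'(x)$ and using the Taylor expansions of $g$ about $c$ and of $f'$ about $c$ (only $g''$ and $f^{(3)}$ enter, confirming the final paragraph of the lemma), one finds
\[
\|h\|_\infty+\operatorname{Var}_{[a,c]}(h)=\|\phi'\|_\infty+\int_a^c|\phi''|\;\ll\;\frac{UM}{NT}\Bigl(1+\frac{M}{N}\Bigr).
\]
Feeding this into Lemma~\ref{lem:2ndderivtest} yields $\mathcal R\ll \dfrac{UM^2}{NT^{3/2}}\bigl(1+M/N\bigr)$, which is exactly the claimed error.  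The interval $[c,b]$ is handled by the same computation with the obvious sign change.  With this correction your proof is complete; the only real work you had not pinned down is the uniform bound on $\phi'$ and $\phi''$ across the removable singularity, which is a straightforward (if slightly tedious) Taylor computation.
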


\begin{prop}\label{prop:main1}
Suppose the hypotheses of Lemmas \ref{lem:redsar1} and \ref{lem:redsar2} hold and suppose $a \le c \le b$.  Then 
\begin{align*}
&\int_a^c g(x)e(f(x)) \ dx\\
 &\qquad=g(c)\frac{e(f(c)+1/8)}{2\sqrt{f''(c)}} - \frac{g(c)f^{(3)}(c)e(f(c))}{6\pi i f''(c)^2} - \frac{g(a)e(f(a))}{2\pi i f'(a)}\\
&\qquad \qquad+ \frac{g'(c)}{2\pi i f''(c)}e(f(c)) + \mathcal{E}(a),
\end{align*}
and
\begin{align*}
&\int_c^b g(x)e(f(x)) \ dx\\ 
&\qquad=g(c)\frac{e(f(c)+1/8)}{2\sqrt{f''(c)}} + \frac{g(c)f^{(3)}(c)e(f(c))}{6\pi i f''(c)^2} + \frac{g(b)e(f(b))}{2\pi i f'(b)}\\
&\qquad\qquad - \frac{g'(c)}{2\pi i f''(c)}e(f(c)) + \mathcal{E}(b),
\end{align*}
where
\[
\mathcal{E}(y) = O\left(\frac{UM^4}{T^2|y-c|^3}\right)+O\left(\frac{UM}{T^{3/2}}\left(1+\frac{M}{N}\right)^2\right).
\]
\end{prop}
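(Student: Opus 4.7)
The plan is to prove both statements by substituting the evaluation from Lemma \ref{lem:redsar1} into the formula provided by Lemma \ref{lem:redsar2}; the proposition is essentially a mechanical combination of the two. I will work out the first identity (the integral on $[a,c]$) and then remark that the second follows by an identical computation.

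First I would apply Lemma \ref{lem:redsar2} to rewrite
\[
\int_a^c g(x)e(f(x))\,dx = g(c)\int_a^c e(f(x))\,dx + \frac{g(c)-g(a)}{2\pi i f'(a)}e(f(a)) + \frac{g'(c)}{2\pi i f''(c)}e(f(c)) + O\!\left(\frac{UM^2}{NT^{3/2}}\left(1+\frac{M}{N}\right)\right).
\]
Then I would substitute the explicit evaluation of $\int_a^c e(f(x))\,dx$ supplied by Lemma \ref{lem:redsar1} and multiply through by $g(c)$. Since Lemma \ref{lem:redsar2}'s hypothesis gives $|g(c)| \le D_0 U$, the factor $g(c)$ on each of Lemma \ref{lem:redsar1}'s error terms may be absorbed into the big-O implicit constants, producing $O(UM^4/(T^2(c-a)^3))$ and $O(UM/T^{3/2})$.

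The key cancellation happens on the first-order endpoint contribution at $a$. Multiplying Lemma \ref{lem:redsar1} by $g(c)$ yields a term $-g(c)e(f(a))/(2\pi i f'(a))$, which then combines with the $(g(c)-g(a))e(f(a))/(2\pi i f'(a))$ term from Lemma \ref{lem:redsar2}: the two $g(c)$ pieces cancel, leaving exactly $-g(a)e(f(a))/(2\pi i f'(a))$, which is precisely the endpoint term appearing in the proposition. The main-order stationary phase term $g(c)e(f(c)+1/8)/(2\sqrt{f''(c)})$, the subleading term $-g(c)f^{(3)}(c)e(f(c))/(6\pi i f''(c)^2)$, and the $g'(c)e(f(c))/(2\pi i f''(c))$ term appear with no interaction.

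Finally I would consolidate the three error contributions using
\[
\frac{UM}{T^{3/2}} + \frac{UM^2}{NT^{3/2}}\left(1+\frac{M}{N}\right) = \frac{UM}{T^{3/2}}\left(1 + \frac{M}{N} + \frac{M^2}{N^2}\right) \le \frac{UM}{T^{3/2}}\left(1+\frac{M}{N}\right)^2,
\]
which together with the $O(UM^4/(T^2(c-a)^3))$ term gives the stated form of $\mathcal{E}(a)$. The integral over $[c,b]$ is handled verbatim, using the second halves of Lemmas \ref{lem:redsar1} and \ref{lem:redsar2} and tracking the sign flip in the endpoint and $g'(c)$ terms. There is no genuine obstacle here---the only thing to watch is the bookkeeping of the endpoint cancellation $g(c) - (g(c)-g(a)) = g(a)$ and the consolidation of the error terms into the compact form involving $(1+M/N)^2$.
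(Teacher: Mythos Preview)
Your proposal is correct and follows exactly the approach the paper takes: the paper's proof is the single sentence ``We just apply Lemma \ref{lem:redsar2} followed by Lemma \ref{lem:redsar1},'' and you have simply spelled out that substitution, including the endpoint cancellation and the consolidation of the error terms into the $(1+M/N)^2$ form.
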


\begin{proof} We just apply Lemma \ref{lem:redsar2} followed by Lemma \ref{lem:redsar1}.
\end{proof}

The advantage of our condition $(M)$ is now the following.

\begin{prop}\label{prop:main2}
Suppose condition $(M)$ holds, that $f$ has a stationary phase point at $x\in J$, and that $x\in [\alpha,\beta]\subset [x-M(x),x+M(x)]\cap J$.  Then the conditions of Lemmas \ref{lem:redsar1} and \ref{lem:redsar2} hold with 
\begin{align*}
C_3&=1, &  M&=\frac{1}{\eta} M(x), & N&=M(x),\\ C_M&=2\eta, & U&=U(x), & T&=\eta^2 f''(x) M(x)^2, 
\end{align*}
and with $C_{2^-}$, $C_2$, $C_4$, $D_0$, $D_1$, $D_2$ and $\delta$ equal to the similarly named constants in condition $(M)$.

Morever, with these definitions, the error terms become 
\[
\mathcal{E}(\alpha)+\mathcal{E}(\beta) =O\left( \frac{U(x)}{f''(x)^2|x-\alpha|^3}\right)+ O\left(\frac{U(x)}{f''(x)^2|x-\beta|^3}\right) +  O\left(\frac{U(x)}{f''(x)^{3/2}M(x)^2} \right),
\]
where if $x=\alpha$ (resp., $x=\beta$) then the first (resp., second) big-O term disappears.
\end{prop}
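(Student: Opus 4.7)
The proof is essentially a direct verification: the parameter choices $M = M(x)/\eta$, $N=M(x)$, $U=U(x)$, $C_M = 2\eta$, $C_3 = 1$, $T$ as stated, and $C_{2^-}, C_2, C_4, D_0, D_1, D_2, \delta$ inherited from condition $(M)$, are engineered precisely so that condition $(M)$ entails, line by line, the hypotheses of Lemmas \ref{lem:redsar1} and \ref{lem:redsar2} on the interval $[\alpha,\beta]$. The plan is (i) to check each premise of Lemma \ref{lem:redsar1} for $f$; (ii) to do the same for Lemma \ref{lem:redsar2} for $g$; (iii) to invoke Proposition \ref{prop:main1} and simplify the resulting error $\mathcal{E}$.

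For (i), condition $(M)$ part (III) supplies $f\in C^4[J]$, and since $[\alpha,\beta]\subset [x-M(x),x+M(x)]\cap J$ the required smoothness persists on the integration interval. Applying condition $(M)$ part (IV) at the base point $x$ yields, on $I_x\supseteq[\alpha,\beta]$, the inequalities $C_{2^-}^{-1} f''(x) \le f''(z) \le C_2 f''(x)$, $|f^{(3)}(z)|\le \eta f''(x)/M(x)$, and $|f^{(4)}(z)|\le \eta^2 C_4 f''(x)/M(x)^2$. With the chosen $M$ and $T$ one checks that $TM^{-2}$, $TM^{-3}$ and $TM^{-4}$ equal the right-hand sides of these three bounds up to the claimed constants, so the Redouaby–Sargos hypotheses follow with $C_3=1$ and with $C_{2^-},C_2,C_4$ carried over verbatim. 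The length hypothesis is $\beta-\alpha\le 2M(x)=C_M M$, the stationary-phase point is $c=x$, and because $\eta=3\delta/C_{2^-}$ the Redouaby–Sargos quantity $\beta_{\mathrm{RS}}:=3\delta/(C_{2^-}C_3)$ satisfies $\beta_{\mathrm{RS}} M = M(x)$, so the proximity hypotheses $0<x-\alpha\le\beta_{\mathrm{RS}} M$ and $0<\beta-x\le\beta_{\mathrm{RS}} M$ both follow from $\alpha,\beta\in[x-M(x),x+M(x)]$.

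For (ii), condition $(M)$ part (III) gives $g\in C^3[J]$, and with $N=M(x)$ and $U=U(x)$ the three $g$-inequalities of condition $(M)$ part (IV) become exactly $|g^{(j)}(z)|\le D_j U N^{-j}$ for $j=0,1,2$, as required by Lemma \ref{lem:redsar2}.

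For (iii), I would apply Proposition \ref{prop:main1}. Its error term is $\mathcal{E}(y)=O(UM^4/(T^2|y-c|^3))+O(UM/T^{3/2}(1+M/N)^2)$. Since $M/N=1/\eta$ is a constant depending only on the data of condition $(M)$, the factor $(1+M/N)^2$ may be absorbed into the implicit constant. A short substitution collapses the first $O$-term to $O(U(x)/(f''(x)^2|y-c|^3))$ and the second to $O(U(x)/(f''(x)^{3/2} M(x)^2))$; when $x=\alpha$ (respectively $x=\beta$) the corresponding integral in Proposition \ref{prop:main1} is absent, so the matching singular term vanishes. The only real difficulty is bookkeeping—tracking the powers of $\eta$ and keeping each inequality oriented correctly—and there is no deeper technical obstacle.
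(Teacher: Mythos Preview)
Your proposal is correct and matches the paper's treatment: the paper states Proposition \ref{prop:main2} without proof, implicitly regarding it as a direct verification that condition $(M)$, with the specified parameter choices, yields the hypotheses of Lemmas \ref{lem:redsar1} and \ref{lem:redsar2}, after which the error terms of Proposition \ref{prop:main1} simplify as claimed. Your three-step outline (check the $f$-hypotheses, check the $g$-hypotheses, substitute into $\mathcal{E}$ and absorb $\eta$-powers into the implicit constants) is exactly the intended bookkeeping.
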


However, we will also need good estimates on oscillatory integrals without a stationary phase point.  The first and second derivative tests below are well-known and often accompany any discussion of the van der Corput transform.  Following them we give some more exact estimates that make use of the integration by parts technique we described in Section \ref{section:heuristics}. 

\begin{lem}\label{lem:1stderivtest} (First derivative test---Lemma 5.1.2 in \cite{huxley1})

Let $f(x)$ be real and differentiable on the open interval $(\alpha, \beta)$ with $f'(x)$ monotone and $f'(x) \ge \kappa > 0$ on $(\alpha, \beta)$.  Let $g(x)$ be real, and let $V$ be the total variation of $g(x)$ on the closed interval $[\alpha,\beta]$ plus the maximum modulus of $g(x)$ on $[\alpha,\beta]$.  Then
\[
\left| \int_\alpha^\beta g(x)e(f(x)) \ dx \right| \le \frac{V}{\pi \kappa}
\]
\end{lem}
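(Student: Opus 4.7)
The plan is first to reduce to the case $g\equiv 1$ via a Stieltjes integration by parts, and then to establish the uniform bound $\left|\int_\alpha^x e(f(t))\,dt\right|\le 1/(\pi\kappa)$ by a change of variables followed by a single integration by parts whose boundary term is chosen to vanish at the correct endpoint.

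First I would set $F(x):=\int_\alpha^x e(f(t))\,dt$ and note that, since $g$ has bounded variation on $[\alpha,\beta]$, the Stieltjes integration by parts
\[
\int_\alpha^\beta g(x)e(f(x))\,dx \;=\; g(\beta)F(\beta) \;-\; \int_\alpha^\beta F(x)\,dg(x)
\]
is available (with $F(\alpha)=0$). Granting for the moment the pointwise bound $\sup_{[\alpha,\beta]}|F|\le 1/(\pi\kappa)$, taking moduli gives
\[
\left|\int_\alpha^\beta g(x)e(f(x))\,dx\right| \;\le\; \frac{1}{\pi\kappa}\left(|g(\beta)| \;+\; \text{total variation of } g \text{ on } [\alpha,\beta]\right) \;\le\; \frac{V}{\pi\kappa},
\]
which is the stated inequality.

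Next I would prove $|F(x)|\le 1/(\pi\kappa)$ for every $x\in[\alpha,\beta]$. Since $f'\ge\kappa>0$, the function $f$ is strictly increasing, so the substitution $u=f(t)$ turns $F(x)$ into
\[
F(x)\;=\;\int_A^B h(u)e(u)\,du, \qquad h(u):=\frac{1}{f'(f^{-1}(u))}, \qquad A:=f(\alpha),\ B:=f(x).
\]
The factor $h$ is positive, monotone (inheriting its direction from $f'$), and bounded by $1/\kappa$. The crucial move is to integrate by parts against a primitive $v$ of $e(u)$ that vanishes at whichever endpoint carries the \emph{larger} value of $h$: if $h$ is decreasing I take $v(u):=(e(u)-e(A))/(2\pi i)$ so that $v(A)=0$; the boundary contribution then reduces to $h(B)(e(B)-e(A))/(2\pi i)$ with modulus at most $h(B)/\pi$, and the remaining $-\int_A^B v\,dh$ is bounded in modulus by $|h(B)-h(A)|/\pi=(h(A)-h(B))/\pi$, so the two pieces telescope to $h(A)/\pi\le 1/(\pi\kappa)$. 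If instead $h$ is increasing I use $v(u):=(e(u)-e(B))/(2\pi i)$ so that $v(B)=0$, and the symmetric calculation gives $h(B)/\pi\le 1/(\pi\kappa)$.

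The main subtlety, and the place where a naive symmetric integration by parts loses a factor of two, is this asymmetric choice of primitive: the monotonicity of $h$ causes the boundary term and the Stieltjes remainder to collapse to $\max(h(A),h(B))/\pi$ only when $v$ is chosen to vanish at the endpoint where $h$ is largest. Everything else is routine; the Stieltjes integration by parts of the first step is standard for functions of bounded variation, and since only moduli and total variation enter the final estimate, the choice of left- versus right-continuous convention at any jumps of $g$ is immaterial.
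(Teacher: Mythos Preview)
The paper does not supply its own proof of this lemma; it is quoted directly from Huxley's monograph as Lemma~5.1.2 and used as a black box. Your argument is correct and is essentially the classical proof: reduce to $g\equiv 1$ by a Riemann--Stieltjes integration by parts against the bounded-variation function $g$, then substitute $u=f(t)$ and exploit the monotonicity of $1/f'$ via a second integration by parts with a well-chosen primitive of $e(u)$. The asymmetric choice of primitive (vanishing at the endpoint where $h$ is largest) is exactly the device that yields the sharp constant $1/\pi$ rather than $2/\pi$, and your explanation of why this telescoping occurs is clean. There is nothing to compare against in the paper itself, so your write-up stands on its own as a valid proof of the cited lemma.
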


\begin{lem}\label{lem:2ndderivtest} (Second derivative test---Lemma 5.1.3 in \cite{huxley1})

Let $f(x)$ be real and twice differentiable on the open interval $(\alpha, \beta)$ with $f''(x) \ge \lambda >0$ on $(\alpha, \beta)$.  Let $g(x)$ be real, and let $V$ be the total variation of $g(x)$ on the closed interval $[\alpha,\beta]$ plus the maximum modulus of $g(x)$ on $[\alpha,\beta]$.  Then 
$$\left| \int_\alpha^\beta g(x)e(f(x))\ dx \right| \le \frac{4V}{\sqrt{\pi \lambda}}.$$

\end{lem}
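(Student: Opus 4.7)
The plan is to prove the second derivative test by reducing it to the first derivative test (Lemma \ref{lem:1stderivtest}) on the subintervals where $f'$ is bounded away from zero, after isolating the (small) region where $f'$ can be close to zero. Since $f''(x)\ge\lambda>0$ on $(\alpha,\beta)$, the derivative $f'$ is strictly increasing and so has at most one zero in $(\alpha,\beta)$; in particular, for any positive threshold $\eta$, the set $\{x\in(\alpha,\beta): |f'(x)|<\eta\}$ is an interval of length at most $2\eta/\lambda$ (by the mean value theorem applied to $f'$).

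First I would fix a threshold $\eta>0$ to be chosen later and split $(\alpha,\beta)=I_-\cup I_0\cup I_+$, where $I_- = \{f'<-\eta\}$, $I_0 = \{|f'|\le\eta\}$, and $I_+=\{f'>\eta\}$; each is an interval (possibly empty) because $f'$ is monotone. On $I_0$, I bound the integral trivially by the length of $I_0$ times $\max|g|$, giving an estimate $\le (2\eta/\lambda)\max|g|\le 2\eta V/\lambda$. On $I_-$ and $I_+$, the hypothesis $|f'|\ge\eta$ together with monotonicity of $f'$ lets me apply the first derivative test with $\kappa=\eta$. This gives a contribution of $V_-/(\pi\eta)+V_+/(\pi\eta)$, where $V_\pm$ denotes the total variation of $g$ on the closure of $I_\pm$ plus the max modulus there.

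Next I would observe that $V_-+V_+\le V+\max|g|\le 2V$, since total variation is additive across disjoint subintervals while the max modulus on each is at most the max modulus on the whole interval, and $V$ already includes one copy of $\max|g|$. Putting the three bounds together yields
\[
\left|\int_\alpha^\beta g(x)e(f(x))\,dx\right|\le \frac{2V}{\pi\eta}+\frac{2\eta V}{\lambda}.
\]
Finally, I would optimize by taking $\eta=\sqrt{\lambda/\pi}$, which equalizes the two terms and yields the stated bound $4V/\sqrt{\pi\lambda}$.

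The main (very mild) obstacle is the bookkeeping around total variation: one must be careful that the definition of $V_\pm$ used in invoking Lemma \ref{lem:1stderivtest} on each piece does not double-count the max modulus. The only other subtlety is that the first derivative test requires $f'$ to be monotone, which is automatic here from $f''\ge\lambda>0$, and that on $I_\pm$ one should use the closed subinterval when applying the first derivative test, which is justified because the integrand is continuous and $f'(x)\to\pm\eta$ continuously at the shared endpoint with $I_0$.
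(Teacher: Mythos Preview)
Your argument is correct and is in fact the standard proof of the second derivative test. The paper does not supply its own proof of this lemma; it is quoted as Lemma 5.1.3 of \cite{huxley1} and used as a black box. Your decomposition into $\{f'<-\eta\}$, $\{|f'|\le\eta\}$, $\{f'>\eta\}$, trivial estimation on the middle piece, application of Lemma~\ref{lem:1stderivtest} on the outer pieces, and optimization at $\eta=\sqrt{\lambda/\pi}$ reproduces exactly the bound $4V/\sqrt{\pi\lambda}$, which is the classical argument found in \cite{huxley1} and elsewhere.

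Two minor remarks. First, Lemma~\ref{lem:1stderivtest} is stated for $f'(x)\ge\kappa>0$; on $I_-$ you should apply it to $-f$ (equivalently, take the complex conjugate), which gives the same bound and is entirely routine. Second, your bookkeeping $V_-+V_+\le 2V$ is fine: total variation is subadditive over disjoint subintervals, and each piece contributes at most one copy of $\max|g|$, so the sum is at most the total variation on $[\alpha,\beta]$ plus $2\max|g|$, which is $V+\max|g|\le 2V$.
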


\begin{prop}\label{prop:2ndderivtest} (Condition $(M)$ version of first and second derivative tests)

Suppose condition $(M)$ holds.  If for some $x \in J$, we have $[\alpha,\beta]\subset I_x:=J \cap [x-M(x),x+M(x)]$, then 
\[
\left| \int_\alpha^\beta g(y)e(f(y)) \ dx \right| \ll \frac{U(x)}{\min_{x\in[\alpha,\beta]} |f'(x)|}
\]
and
\[
\left| \int_\alpha^\beta g(y)e(f(y))\ dy \right| \ll \frac{U(x)}{\sqrt{f''(x)}}
\]
with uniform implicit constant. 
\end{prop}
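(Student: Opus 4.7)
The plan is to apply Lemmas \ref{lem:1stderivtest} and \ref{lem:2ndderivtest} directly on $[\alpha,\beta]$, with the heavy lifting already done by the hypothesis $[\alpha,\beta]\subset I_x$, which lets us invoke the pointwise bounds from condition $(M)$ part (IV).

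First I would bound the quantity $V$ appearing in both lemmas. By condition $(M)$ part (IV), we have $|g(y)| \le D_0 U(x)$ for all $y \in I_x \supset [\alpha,\beta]$, so the maximum modulus of $g$ on $[\alpha,\beta]$ is at most $D_0 U(x)$. For the total variation, $g$ is $C^2$ on $J$, so
\[
\mathrm{Var}_{[\alpha,\beta]}(g) \le \int_\alpha^\beta |g'(y)|\,dy \le \frac{D_1 U(x)}{M(x)}\cdot (\beta-\alpha) \le 2 D_1 U(x),
\]
using $\beta-\alpha \le 2M(x)$. Combining gives $V \ll U(x)$ with an implicit constant depending only on $D_0$ and $D_1$.

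Next, for the second-derivative bound, note that $f'' > 0$ on $J$ and condition $(M)$ part (IV) supplies $f''(y) \ge f''(x)/C_{2^-}$ for $y \in I_x$. So Lemma \ref{lem:2ndderivtest} applies with $\lambda = f''(x)/C_{2^-}$, yielding
\[
\left|\int_\alpha^\beta g(y)e(f(y))\,dy\right| \le \frac{4V}{\sqrt{\pi\lambda}} \ll \frac{U(x)}{\sqrt{f''(x)}}.
\]
For the first-derivative bound, observe that $f'' > 0$ on $J$ implies $f'$ is monotone on $[\alpha,\beta]$. Let $\kappa = \min_{y \in [\alpha,\beta]} |f'(y)|$. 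If $\kappa = 0$ the inequality is vacuous, so assume $\kappa > 0$; then by continuity and monotonicity $f'$ has constant sign on $[\alpha,\beta]$. Applying Lemma \ref{lem:1stderivtest} (to $f$ if $f' > 0$, or to the complex conjugate with $-f$ if $f' < 0$) gives
\[
\left|\int_\alpha^\beta g(y)e(f(y))\,dy\right| \le \frac{V}{\pi\kappa} \ll \frac{U(x)}{\min_{y\in[\alpha,\beta]}|f'(y)|}.
\]

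There is essentially no obstacle here, since condition $(M)$ was designed precisely to supply uniform bounds on $f''$ and on the derivatives of $g$ over any subinterval of $I_x$; the only minor point to check is the variation estimate for $g$, which is immediate from the bound on $|g'|$ and the fact that $\beta-\alpha\le 2M(x)$. The resulting implicit constants depend only on the condition $(M)$ constants $C_{2^-}$, $D_0$, $D_1$, and the absolute constants in Lemmas \ref{lem:1stderivtest} and \ref{lem:2ndderivtest}.
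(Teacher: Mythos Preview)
Your proof is correct and follows essentially the same approach as the paper: bound $V$ using the condition $(M)$ estimates on $|g|$ and $|g'|$ together with $\beta-\alpha\le 2M(x)$, then feed $\lambda\asymp f''(x)$ and $\kappa=\min|f'|$ into Lemmas \ref{lem:1stderivtest} and \ref{lem:2ndderivtest}. If anything, your treatment is slightly more careful in handling the sign of $f'$ and the vacuous case $\kappa=0$.
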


\begin{proof}
We can bound $V$ in Lemmas \ref{lem:1stderivtest} and \ref{lem:2ndderivtest} by $$g(\alpha)+g(\beta)+\int_\alpha^\beta |g'(y)| \ dy \ll  U(x) + 2 M(x)\cdot  \frac{U(x)}{M(x)}\ll U(x)$$ using condition $(M)$.  Also by condition $(M)$, we have that the minimum of $f''(z)$ on $I_x$ is at worst $\gg f''(x)$.
\end{proof}

In some cases, the first derivative test needs to be made explicit, for which we have the following result.

\begin{prop}\label{prop:intbyparts}
Let $f\in C^3( [\alpha,\beta])$, and $g\in C^2([\alpha,\beta])$, and define $h_r(x)$ by \[
h_r(x) := \frac{(f'(x)-r)g'(x) - g(x)f''(x)}{(f'(x)-r)^3}.
\]

Suppose that $f'(x)\neq r$ on an interval $[\alpha,\beta]$, and let $$K_r(\alpha,\beta):=\sum \left| h_r(x) \right| $$ where the sum ranges over all $x\in[\alpha,\beta]$ where $h_r'(x)=0$.  Then we have
\begin{align*}
\int_\alpha^\beta  g(x)e(f(x)-rx) \ dx &= \left[\frac{g(x)}{2\pi i (f'(x)-r)}e(f(x)-rx)\right]_\alpha^\beta \\ &\qquad +O(K_r(\alpha,\beta)) +O( |h_r(\alpha)|)+O( |h_r(\beta)|)
\end{align*}
\end{prop}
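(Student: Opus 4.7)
The plan is to integrate by parts twice, using the identity $e(f(x)-rx) = \frac{1}{2\pi i(f'(x)-r)}\frac{d}{dx}e(f(x)-rx)$, which is valid since $f'(x) \neq r$ on $[\alpha,\beta]$. First I would write
\[
\int_\alpha^\beta g(x) e(f(x)-rx)\,dx = \left[\frac{g(x)}{2\pi i (f'(x)-r)} e(f(x)-rx)\right]_\alpha^\beta - \int_\alpha^\beta \phi(x) e(f(x)-rx)\,dx,
\]
where $\phi(x) = \frac{g'(x)(f'(x)-r) - g(x)f''(x)}{2\pi i (f'(x)-r)^2}$. This produces the first endpoint contribution of the desired formula. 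Since $\phi(x)/(2\pi i(f'(x)-r)) = h_r(x)/(2\pi i)^2$, a second integration by parts on the remaining integral yields an endpoint contribution of the form $[h_r(x) e(f(x)-rx)/(2\pi i)^2]_\alpha^\beta$ plus the integral $\frac{1}{(2\pi i)^2}\int_\alpha^\beta h_r'(x)\, e(f(x)-rx)\,dx$. The new endpoint term is immediately $O(|h_r(\alpha)|)+O(|h_r(\beta)|)$.

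Next I would trivially bound the final integral by $\int_\alpha^\beta |h_r'(x)|\,dx$, which is precisely the total variation of $h_r$ on $[\alpha,\beta]$. To convert this variation into the quantity $K_r(\alpha,\beta)$, I would use the standard observation that $h_r$ is monotonic between consecutive zeros of $h_r'$; telescoping across the critical points $x_1,\ldots,x_k$ gives
\[
\int_\alpha^\beta |h_r'(x)|\,dx = |h_r(x_1)-h_r(\alpha)| + \sum_{i=1}^{k-1}|h_r(x_{i+1})-h_r(x_i)| + |h_r(\beta)-h_r(x_k)|,
\]
which by the triangle inequality is at most $2K_r(\alpha,\beta) + |h_r(\alpha)| + |h_r(\beta)|$. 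Combining all contributions yields exactly the claimed expansion.

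There is no serious obstacle here: the argument is essentially bookkeeping. The only mild subtlety is verifying that the coefficient produced after the second integration by parts collapses cleanly into $h_r/(2\pi i)^2$ (so that the definition of $h_r$ aligns with what naturally arises from this process), and ensuring that the total-variation bound handles the case where $h_r'$ has infinitely many zeros — but since $h_r \in C^1$ on a compact interval, either the sum $K_r(\alpha,\beta)$ is finite (in which case the telescoping argument applies verbatim) or it is infinite and the stated bound is trivial.
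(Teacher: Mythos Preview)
Your proposal is correct and follows essentially the same route as the paper: two integrations by parts, then bounding the remaining integral by the total variation of $h_r$ and controlling that variation by the values at critical points and endpoints. Your write-up is in fact slightly more careful than the paper's, since you make the telescoping constant explicit and address the possibility of infinitely many zeros of $h_r'$.
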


\begin{proof}
We begin by applying integration by parts twice to our original integral.
\begin{align}
&\int_\alpha^\beta  g(x)e(f(x)-rx) \ dx \notag \\
&\qquad= \left[\frac{g(x)}{2\pi i (f'(x)-r)}e(f(x)-rx)\right]_\alpha^\beta \\ &\qquad-\int_\alpha^\beta \frac{1}{2\pi i} \frac{(f'(x)-r)g'(x)-g(x)f''(x)}{(f'(x)-r)^2} e(f(x)-rx) \ dx \notag\\
&\qquad=  \left[\frac{g(x)}{2\pi i (f'(x)-r)}e(f(x)-rx)\right]_\alpha^\beta \notag \\
&\qquad\qquad -\left[ \frac{1}{(2\pi i)^2}h_r(x) e(f(x)-rx)\right]_\alpha^\beta \label{eq:propintbyparts1}\\
&\qquad\qquad + O\left( \int_\alpha^\beta \left| \frac{d}{dx}  h_r(x)  \right| \ dx  \right) \label{eq:propintbyparts2}
\end{align}

The two terms on line \eqref{eq:propintbyparts1} are bounded trivially by $O(h_r(a))+O(h_r(b))$.

The remaining integral on line \eqref{eq:propintbyparts2} is bounded by the total variation of $h_r(x)$, which is at most the modulus of $h_r(x)$ at every critical point added to $|h_r(\alpha)|$ and $|h_r(\beta)|$.  $K_r(\alpha,\beta)$ is, by definition, the sum of the moduli $h_r(x)$ at every critical point (since the derivative---by the assumption that $f'(x)\neq r$ on $[\alpha,\beta]$---always exists), so the proof is complete.
\end{proof}

By making use of the following proposition, we can sometimes avoid the $K_r$ term in Proposition \ref{prop:intbyparts}.  An alternate form of the resulting error terms can be found in Lemma 5.5.5 of \cite{huxley1}.

\begin{prop}\label{prop:f'bound}
Suppose condition $(M)$ holds.  If $\epsilon=\pm 1$, then for all $x$ such that $x,x+\epsilon \cdot M(x) \in J$ and all $r$ such that $$f'(x)-r=0 \qquad \text{or} \qquad \text{sgn}(f'(x)-r)=\text{sgn}(k),$$ we have $$|f'(x+\epsilon \cdot M(x))-r|\gg M(x)f''(x).$$
\end{prop}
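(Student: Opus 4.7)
The plan is to apply the mean value theorem on the interval with endpoints $x$ and $x+\epsilon M(x)$, combined with the uniform lower bound $f''(z)\ge f''(x)/C_{2^-}$ for $z\in I_x$ supplied by part (IV) of condition $(M)$. (I read the ``$\mathrm{sgn}(k)$'' in the hypothesis as ``$\mathrm{sgn}(\epsilon)$,'' since no $k$ has been introduced and this is the only reading that makes the statement meaningful.) The overall strategy is: moving $x$ in direction $\epsilon$ makes $f'$ move monotonically, and either it starts at $r$ and drifts away at rate $f''$, or it starts on the side of $r$ dictated by $\epsilon$ and drifts \emph{further} away.

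First, I would note that by the assumptions of Section \ref{section:assumptions} we have $f''>0$ on $J$, so $f'$ is strictly increasing on $J$. Since $x,x+\epsilon M(x)\in J$ by hypothesis, the mean value theorem applied to $f'$ on the closed interval with endpoints $x$ and $x+\epsilon M(x)$ yields a $\xi$ in that interval with
\[
f'(x+\epsilon M(x))-f'(x)=\epsilon M(x)\,f''(\xi).
\]
Because $\xi$ lies in $I_x=[x-M(x),x+M(x)]\cap J$, condition $(M)$ part (IV) gives $f''(\xi)\ge C_{2^-}^{-1}f''(x)$.

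Next, I would split into the two cases of the hypothesis. If $f'(x)-r=0$, then
\[
|f'(x+\epsilon M(x))-r|=|f'(x+\epsilon M(x))-f'(x)|=M(x)\,f''(\xi)\gg M(x)\,f''(x),
\]
and we are done. If $\mathrm{sgn}(f'(x)-r)=\mathrm{sgn}(\epsilon)$, I would handle the two sign choices separately: for $\epsilon=+1$ we have $r<f'(x)\le f'(x+M(x))$ (using monotonicity of $f'$), so
\[
f'(x+M(x))-r\ge f'(x+M(x))-f'(x)=M(x)\,f''(\xi)\gg M(x)\,f''(x);
\]
for $\epsilon=-1$ we have $f'(x-M(x))\le f'(x)<r$, hence
\[
|f'(x-M(x))-r|=r-f'(x-M(x))\ge f'(x)-f'(x-M(x))=M(x)\,f''(\xi)\gg M(x)\,f''(x).
\]
In every case the conclusion follows, with the implicit constant depending only on $C_{2^-}$.

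Honestly, the main obstacle here is more notational than mathematical: deciphering the apparent ``$k$'' typo and keeping track of the sign conventions. The analytic content is entirely the mean value theorem plus the local lower bound on $f''$ built into condition $(M)$, so no delicate estimate or auxiliary construction is required.
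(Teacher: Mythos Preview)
Your argument is correct, and indeed your reading of ``$\mathrm{sgn}(k)$'' as ``$\mathrm{sgn}(\epsilon)$'' is the intended one (the paper itself slips between $k$ and $\epsilon$ in its own proof). However, your route is genuinely different from the paper's. The paper expands $f'(x+\epsilon M(x))$ to second order, writing
\[
f'(x+\epsilon M(x))-r=(f'(x)-r)+\epsilon M(x)\,f''(x)+O\!\left(\tfrac12 M(x)^2\max|f^{(3)}|\right),
\]
and then invokes the bound $|f^{(3)}|\le \eta f''(x)/M(x)$ from condition $(M)$ together with the hypothesis $\eta<2$ to ensure the remainder is strictly smaller than the main term $M(x)f''(x)$. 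Your mean-value-theorem argument is more economical: it uses only the lower bound $f''(\xi)\ge C_{2^-}^{-1}f''(x)$ from part (IV) and never touches $f^{(3)}$ or the $\eta<2$ restriction at all. The paper's version yields the implicit constant $1-\eta/2$, yours yields $C_{2^-}^{-1}$; both are admissible. Your approach has the advantage of showing that this particular proposition does not actually depend on the more delicate parts of condition $(M)$ (the $f^{(3)}$ bound and the $\eta<2$ constraint), which the paper's proof obscures.
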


\begin{proof}
By Taylor's remainder theorem, we can write
\begin{align}
 f'(x+\epsilon \cdot M(x))-r &= (f'(x)-r)+\epsilon \cdot M(x)f''(x)\notag \\
&\qquad +O\left(\frac{1}{2} M(x)^2 \max_{y\in [x,x+kM(x)]} f^{(3)}(y) \right) \label{eq:f'bound}
\end{align} with implicit constant 1.

By condition $(M)$, we have  
\[
\max_{y\in [x,x+\epsilon \cdot M(x)]} |f^{(3)}(y)| \le \frac{\eta f''(x)}{M(x)}.
\]
Therefore the big-O term in \eqref{eq:f'bound} is at most $\eta M(x)f''(x)/2$.  Since $\eta<2$ and since $f'(x)-r$ is either $0$ or shares the same sign as $\epsilon$, this gives the result.
\end{proof}

\begin{prop}\label{prop:intbypartsvar}
 Suppose $f$ and $g$ satisfy condition $(M)$ for some function $M(x)$ and its associated constants.  Let $\epsilon =\pm 1$.  Assume that $x_r$ is not in the interval from $x$ to $x+\epsilon \cdot M(x)$, and that $x+\epsilon\cdot M(x)$ is farther from $x_r$ than $x$ is.  Then we have that 
\begin{align*}
\int_{x+\epsilon\cdot M(x)}^x g(y) e(f(y)-ry) \ dy &=\left. \frac{g(y)}{2\pi i (f'(y)-r)} e(f(y)-ry)\right]_{y=x+\epsilon \cdot M(x)}^x \\
&\qquad + O\left( \frac{U(x) f''(x)}{|f'(x)-r|^3} \right)+ O\left( \frac{U(x)}{M(x)(f'(x)-r)^2}\right)
\end{align*}

We also have the following bound:
\[
\left. \frac{g(y)}{2\pi i (f'(y)-r)} e(f(y)-ry)\right]_{y=x+\epsilon \cdot M(x)}=O\left( \frac{U(x)}{M(x)f''(x)}  \right).
\]
\end{prop}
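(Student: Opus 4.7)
The plan is to apply integration by parts twice in the manner of Proposition \ref{prop:intbyparts}, then use condition $(M)$ and the geometric hypotheses about $x_r$ to estimate the resulting boundary terms and the variation $\int|h_r'(y)|\,dy$. The geometric hypotheses (that $x_r$ does not lie between $x$ and $x+\epsilon\cdot M(x)$, and that $x+\epsilon\cdot M(x)$ is farther from $x_r$) together with $f''>0$ force $\text{sgn}(f'(x)-r)=\text{sgn}(\epsilon)$, so Proposition \ref{prop:f'bound} applies and yields $|f'(x+\epsilon\cdot M(x))-r|\gg M(x)f''(x)$; feeding this into $|g(y)/(2\pi i(f'(y)-r))|$ at $y=x+\epsilon\cdot M(x)$ immediately gives the second displayed bound of the proposition. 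The same monotonicity shows that $|f'(y)-r|$ is minimized at $y=x$ throughout the interval of integration.

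Integrating by parts twice yields the principal boundary term $g(y)e(f(y)-ry)/(2\pi i(f'(y)-r))$ at the two endpoints, a second-order boundary term proportional to $h_r(y)$ at the same endpoints, and a remainder of size $\int|h_r'(y)|\,dy$. Condition $(M)$ gives $|h_r(y)|\ll U(x)/(M(x)|f'(y)-r|^2)+U(x)f''(x)/|f'(y)-r|^3$, which at $y=x$ is exactly the claimed error, and at $y=x+\epsilon\cdot M(x)$ is even smaller since $|f'(y)-r|$ is larger there; hence both second-order endpoint contributions are absorbed into the target $O$-terms.

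The main obstacle is the variation integral. Differentiating $h_r$ and invoking condition $(M)$ pointwise yields three pieces comparable to $U(x)M(x)^{-2}u^{-2}$, $U(x)f''(x)M(x)^{-1}u^{-3}$, and $U(x)f''(x)^2u^{-4}$ where $u=|f'(y)-r|$. I would change variables to $u$, using $du\asymp f''(x)\,dy$ on $I_x$ (valid by condition $(M)$), and integrate from $u_0=|f'(x)-r|$ up to $u_1=|f'(x+\epsilon\cdot M(x))-r|$. The last two pieces integrate to exactly the two target error terms, but the first piece gives the a priori unwanted quantity $U(x)M(x)^{-2}f''(x)^{-1}(u_0^{-1}-u_1^{-1})$. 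Rewriting this as $U(x)M(x)^{-2}f''(x)^{-1}(u_1-u_0)/(u_0u_1)$ and applying the mean value bound $u_1-u_0\ll M(x)f''(x)$ together with $u_0\le u_1$ reduces it to $O(U(x)/(M(x)|f'(x)-r|^2))$, absorbed into the second target error. Collecting the principal boundary term with the second-order boundary and variation estimates completes the proof; the argument is symmetric between $\epsilon=\pm 1$ because condition $(M)$ is applied on the symmetric interval $I_x$.
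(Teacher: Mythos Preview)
Your proposal is correct and follows essentially the same route as the paper: integrate by parts twice, invoke Proposition~\ref{prop:f'bound} for the boundary term at $x+\epsilon\cdot M(x)$, bound $h_r$ at both endpoints via condition~$(M)$, and then control $\int|h_r'(y)|\,dy$. The only technical divergence is in this last step: the paper expands $h_r'$ into four pieces, bounds the first three crudely by (maximum on $I_x$)$\times M(x)$, and handles the $g(y)f''(y)^2/(f'(y)-r)^4$ term by pulling out one factor $f''(y)=O(f''(x))$ and integrating the exact antiderivative $(f'(y)-r)^{-3}$; you instead change variables to $u=|f'(y)-r|$ throughout and use the mean-value bound $u_1-u_0\ll M(x)f''(x)$ to rescue the $u^{-2}$ piece. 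Both bookkeeping choices land on the same two $O$-terms, so the difference is cosmetic.
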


\begin{proof}
 As in Proposition \ref{prop:intbyparts}, we apply integration by parts twice to our starting integral
\begin{align}
&\int_{x+\epsilon \cdot M(x)}^x  g(y)e(f(y)-ry) \ dy\notag\\
&\qquad=  \left[\frac{g(y)}{2\pi i (f'(y)-r)}e(f(y)-ry)\right]_{y=x+\epsilon\cdot M(x)}^x\notag\\
&\qquad\qquad  +O(h_r(x))+O(h_r(x+\epsilon\cdot M(x)))\notag \\
&\qquad\qquad  + O\left( \int_{x+\epsilon\cdot M(x)}^x \left|\frac{d}{dy}  \frac{(f'(y)-r)g'(y)-g(y)f''(y)}{(2\pi i)^2(f'(y)-r)^3}  \right| \ dy \right) \label{eq:intbypartsvar}
\end{align}

The relation 
\[
\left.\frac{g(y)}{2\pi i (f'(y)-r)}e(f(y)-ry)\right]_{y=x+\epsilon \cdot M(x)} = O\left( \frac{g(x)}{M(x)f''(x)}  \right)
\]
holds by Proposition \ref{prop:f'bound}.

Using condition $(M)$, we have the following additional bounds.
\begin{align*}
h_r(x) &= O\left( \frac{g(x)}{M(x)(f'(x)-r)^2}\right) + O\left( \frac{g(x) f''(x)}{|f'(x)-r|^3} \right)\\
 h_r(x+\epsilon\cdot M(x)) &= O\left(\frac{g'(x+\epsilon\cdot M(x))}{(f'(x+\epsilon\cdot M(x))-r)^2} \right)\\
&\qquad + O\left( \frac{g(x+\epsilon\cdot M(x))f''(x+\epsilon\cdot M(x))}{(f'(x+\epsilon\cdot M(x))-r)^3} \right) \\
&=  O\left( \frac{g(x)}{M(x)(f'(x)-r)^2}\right) + O\left( \frac{g(x) f''(x)}{|f'(x)-r|^3} \right)
\end{align*}
The last equality holds since $|f'(x+\epsilon \cdot M(x))-r|$ is bigger than $|f'(x)-r|$ by our assumption about the relative placement of $x$, $x_r$, and $x+\epsilon \cdot M(x)$: for example, if $x_r < x < x+M(x)$, then $r < f'(x) < f'(x+M(x))$.

Finally, to estimate the integral in \eqref{eq:intbypartsvar} first notice that 
\begin{align*}
\left|\frac{d}{dy}  \frac{(f'(y)-r)g'(y)-g(y)f''(y)}{(2\pi i)^2(f'(y)-r)^3}  \right|&=O\left( \frac{g''(y)}{(f'(y)-r)^2} \right)+O\left( \frac{g'(y)f''(y)}{(f'(y)-r)^3} \right)\\
&\qquad +O\left( \frac{g(y)f^{(3)}(y)}{(f'(y)-r)^3} \right)+O\left( \frac{g(y)f''(y)^2}{(f'(y)-r)^4} \right).
\end{align*}
On the interval between $x$ and $x+\epsilon\cdot M(x)$, the maximum of $g''(y)/(f'(y)-r)^2$ is
\[
\ll \frac{g(x)}{M(x)^2(f'(x)-r)^2},
\]
and since we are integrating over an interval of length $M(x)$, the contribution of this term is at most
\[
 \frac{g(x)}{M(x)(f'(x)-r)^2}.
\]
Similarly, one can show that 
\[
\int_{x+\epsilon\cdot M(x)}^x O\left( \frac{g'(y)f''(y)}{|f'(y)-r|^3} \right)+O\left( \frac{g(y)f^{(3)}(y)}{|f'(y)-r|^3} \right) \ dy= O\left(\frac{g(x)f''(x)}{|f'(x)-r|^3}\right).
\]

We estimate the remaining integral by using $f''(y)=O(f''(x))$:
\begin{align*}
 \int_{x+\epsilon\cdot M(x)}^x \frac{g(y)f''(y)^2}{(f'(y)-r)^4} \ dy &=O\left( g(x)f''(x) \int_{x+\epsilon \cdot M(x)}^x \frac{f''(y)}{(f'(y)-r)^4} \ dy\right)\\
&= O\left( g(x) f''(x) \left[ \frac{1}{(f'(y)-r)^3} \right]_{x+\epsilon \cdot M(x)}^x \right) \\
&= O\left(\frac{g(x) f''(x)}{|f'(x)-r|^3}\right). \qedhere
\end{align*}
\end{proof}

\begin{rem}
 If $g(x)$ is a constant, then we may remove the term
\[
 O\left( \frac{U(x)}{M(x)(f'(x)-r)^2}\right)
\]
from the statement of Proposition \ref{prop:intbypartsvar}.  This would remove the term $U(x)/M(x)\|f'(x)\|^2$ from $\Delta_2(x)$ in Theorem \ref{thm:main}.  However, very often one finds a term of this same size occuring in $\Delta_3(x)$ regardless.
\end{rem}

The primary technique we shall use to evaluate sums will be a variant the Euler-Maclaurin summation formula; however, the Euler-Maclaurin formula itself sums over all integers in an interval $[a,b]$, while we will often want to sum over all values $x$ in some interval $[\alpha, \beta]$ for which $F(x)$ is an integer (for some function $F$).  

\begin{lem}\label{lem:emsum} (Euler-Maclaurin summation, first derivative version---page 10 in \cite{karatsuba})

Suppose $f$ is a differentiable function on $[\alpha,\beta]$ then
\begin{align*}\sum_{\alpha < n \le \beta} f(n)&=\int_\alpha^\beta f(x) \ dx +\int_\alpha^\beta s(x) f'(x) \ dx\\ &\qquad -s(\beta) f(\beta) + s(\alpha)f(\alpha) .\end{align*}
\end{lem}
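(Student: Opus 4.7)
The plan is to derive the identity by applying integration by parts to $\int_\alpha^\beta s(x) f'(x)\,dx$, exploiting that $s(x) = \{x\} - 1/2$ is piecewise smooth with $s'(x)=1$ on each open interval between consecutive integers and has jump discontinuities of size $-1$ at every integer (the left limit is $+1/2$ and the right limit is $-1/2$).

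First I would enumerate the integers $n_1 < n_2 < \cdots < n_k$ lying in the open interval $(\alpha,\beta)$, set $n_0 := \alpha$ and $n_{k+1} := \beta$, and split
\[
\int_\alpha^\beta s(x) f'(x)\,dx \;=\; \sum_{j=0}^{k} \int_{n_j}^{n_{j+1}} s(x) f'(x)\,dx.
\]
On each open subinterval $(n_j, n_{j+1})$ the sawtooth is smooth with $s'=1$ and extends continuously to the closed subinterval via its one-sided limits, so classical integration by parts gives
\[
\int_{n_j}^{n_{j+1}} s(x) f'(x)\,dx \;=\; s(n_{j+1}^-) f(n_{j+1}) - s(n_j^+) f(n_j) - \int_{n_j}^{n_{j+1}} f(x)\,dx.
\]

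Next I would sum these identities over $j$. The $f$-integrals telescope into $\int_\alpha^\beta f(x)\,dx$, and the boundary contributions telescope as follows: at each interior integer $n_j$ ($1\le j\le k$) the coefficient of $f(n_j)$ equals $s(n_j^-) - s(n_j^+) = 1/2 - (-1/2) = 1$, while at the outer endpoints we are left with $s(\beta^-) f(\beta) - s(\alpha^+) f(\alpha)$. Rearranging yields
\[
\sum_{j=1}^{k} f(n_j) \;=\; \int_\alpha^\beta s(x) f'(x)\,dx + \int_\alpha^\beta f(x)\,dx - s(\beta^-) f(\beta) + s(\alpha^+) f(\alpha).
\]

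The main thing to check is that this matches the lemma's sum $\sum_{\alpha<n\le\beta} f(n)$, whose range differs from $\{n_1,\ldots,n_k\}$ precisely by possibly including $f(\beta)$ when $\beta\in\mathbb{Z}$. Because $\{x\}$ is right-continuous, $s(\alpha^+) = s(\alpha)$ holds in every case, so the $\alpha$-term is already in the desired form. For the $\beta$-term I would verify case-by-case: if $\beta\notin\mathbb{Z}$ then $s(\beta^-)=s(\beta)$ and the LHS sum already equals $\sum_j f(n_j)$; if $\beta\in\mathbb{Z}$ then adding the missing $f(\beta)$ to the LHS forces the adjustment $-s(\beta^-)f(\beta)+f(\beta) = -\tfrac{1}{2} f(\beta) + f(\beta) = \tfrac{1}{2}f(\beta) = -s(\beta)f(\beta)$, so the RHS agrees with the stated formula. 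This bookkeeping at the endpoints is the only subtlety; the rest is a routine application of integration by parts.
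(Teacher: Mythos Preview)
Your proof is correct and follows the standard derivation of the first-order Euler--Maclaurin formula via piecewise integration by parts against the sawtooth. Note, however, that the paper does not actually prove this lemma: it is quoted from Karatsuba's textbook with no argument given, so there is no paper proof to compare against. Your treatment of the endpoint cases (in particular the bookkeeping when $\beta\in\mathbb{Z}$, using right-continuity of $s$) is exactly the care needed, and nothing is missing.
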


\begin{prop}\label{prop:inversesum}
Suppose $G$ is a real-valued, differentiable function on $[a,b]$.  Suppose $F$ is a differentiable function such that $F([a,b])=[\alpha,\beta]$, then 
\begin{align*}\sum_{\alpha\le n\le\beta} G(F^{-1}(n)) &= \int_a^b G(y)\cdot |F'(y)|\ dy +O\left(  \int_a^b |G'(y)|\ dy \right)\\ 
& \qquad +O\left(\left|s(a)\cdot G(a) \right|\right)+O\left(\left|s(b)\cdot G(b) \right|\right)\\
&\qquad  +O\left( \sum_{x\text{ is a local extremum of }F\text{ on }[a,b]} \left|s(x)\cdot G(x) \right| \right)
\end{align*}
where $G(F^{-1}(n))$ is a sum over all $G(x)$ for $x\in F^{-1}(n)$.
\end{prop}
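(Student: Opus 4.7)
The plan is to reduce the claim to the classical Euler--Maclaurin formula (Lemma \ref{lem:emsum}) by partitioning $[a,b]$ into subintervals on which $F$ is monotone, applying Euler--Maclaurin to the pulled-back function $n\mapsto G(F^{-1}(n))$ on each piece, and changing variables back to $x\in[a,b]$ before recombining.

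I would begin by choosing $a=a_0<a_1<\dots<a_k=b$ with interior nodes $a_1,\dots,a_{k-1}$ equal to the local extrema of $F$, so that $F$ is strictly monotone on each $[a_i,a_{i+1}]$ and $F^{-1}$ is single-valued on the image $F([a_i,a_{i+1}])$. Applying Lemma \ref{lem:emsum} on each such image interval to the function $\phi(t):=G(F^{-1}(t))$ (with $F^{-1}$ taken on the relevant monotone branch) produces three pieces: a main integral, a sawtooth-times-derivative integral, and two boundary terms at the images $F(a_i)$ and $F(a_{i+1})$. The next step is the substitution $t=F(x)$: the main integral becomes $\int_{a_i}^{a_{i+1}} G(x)|F'(x)|\,dx$, with the absolute value absorbing the sign of $F'$ that arises from reversed orientation when $F$ is decreasing; using the identity $\phi'(t)\,dt=G'(x)\,dx$, the sawtooth integral rewrites as $\pm\int_{a_i}^{a_{i+1}} s(F(x))G'(x)\,dx$, bounded in absolute value by $(1/2)\int_{a_i}^{a_{i+1}}|G'(x)|\,dx$ since $|s|\le 1/2$; and the two boundary terms have absolute value at most $(1/2)|G(a_i)|$ and $(1/2)|G(a_{i+1})|$, respectively.

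Summing over $i=0,\dots,k-1$, the main integrals telescope to $\int_a^b G(y)|F'(y)|\,dy$, the sawtooth contributions sum to $O(\int_a^b|G'(y)|\,dy)$, the boundary terms at $a_0=a$ and $a_k=b$ give the advertised $O(|s(a)G(a)|)$ and $O(|s(b)G(b)|)$ terms, and the pairs of boundary terms at each internal extremum $a_i$ combine into a single $O(|s(a_i)G(a_i)|)$ contribution. The main obstacle is the bookkeeping at these interior nodes: because $F(a_i)$ is generally not an integer, the two Euler--Maclaurin boundary contributions coming from the two sides of $a_i$ do not cancel, so one must argue via $|s|\le 1/2$ that their sum is at most a constant multiple of $|G(a_i)|$ and then reabsorb the constant to record the bound as $O(|s(a_i)G(a_i)|)$; here I would note that $|s(a_i)|$ and $|s(F(a_i))|$ are both bounded by $1/2$, so the two natural formulations of the boundary contribution agree up to the implicit constant. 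A minor additional point is a consistent half-open convention at partition points where $F(a_i)$ happens to be an integer, needed to avoid double-counting $G(a_i)$ in the inner sums; any such coincidence contributes only finitely many extra terms already subsumed in the stated error.
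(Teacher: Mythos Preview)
Your approach---partition $[a,b]$ at the local extrema of $F$, apply Lemma~\ref{lem:emsum} on each monotone piece to the pulled-back function, then change variables $t=F(x)$ and recombine---is precisely the paper's (whose own proof is three sentences); your write-up simply fills in the bookkeeping the paper leaves implicit.

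One point deserves correction. The Euler--Maclaurin boundary contributions you obtain are $s(F(a_i))\,G(a_i)$, not $s(a_i)\,G(a_i)$, and your claim that the two ``agree up to the implicit constant'' because both sawtooth values are at most $1/2$ does not justify writing the bound as $O(|s(a_i)G(a_i)|)$: if $a_i$ is a half-integer while $F(a_i)$ is not, the latter expression vanishes while the actual boundary term need not. This is really an imprecision in the proposition's stated form rather than a flaw in your method; the paper's brief proof does not address it either, and in every application in the paper the $s$-factor is simply absorbed via $|s|\le 1/2$, so the error terms are effectively $O(|G(a_i)|)$.
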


\begin{proof}
We apply the first derivative version of Euler-Maclaurin summation to each interval where $F$ is monotonic (and hence, $F^{-1}$ is 1-1).  Then we apply a change of variables with $y=F^{-1}(x)$.  The sum $$O\left( \sum_{x\text{ is a local extremum of }F\text{ on }[a,b]} \left|s(x)\cdot G(x) \right| \right)$$ bounds the contributions of $s(\alpha)f(\alpha)$ and $-s(\beta) f(\beta)$ arising from the endpoints of the intervals of monotonicity.
\end{proof}

\begin{lem}\label{lem:poissonsummation} (Lemma 5.4.2 in \cite{huxley1}\footnote{Huxley assumes $a$ and $b$ are integers, so we provide the proof when $a$, $b$ are real.}---Poisson summation)

Let $f\in C^2([a,b])$ for real numbers $a<b$, then
$$\sideset{}{^*}\sum_{a\le n \le b} f(n) = \lim_{R\to \infty} \sum_{r=-R}^R \int_a^b f(x)e(rx) \ dx.$$
\end{lem}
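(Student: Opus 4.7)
The plan is to reduce the real-endpoint version to the integer-endpoint version, which we may assume from Huxley. Choose integers $A$ and $B$ with $A \le a \le b \le B$. Huxley's result applied to $[A, B]$ gives
\[
\sideset{}{^*}\sum_{A \le n \le B} f(n) = \lim_{R \to \infty} \sum_{r=-R}^R \int_A^B f(x) e(rx) \, dx.
\]
Using the additivity $\int_A^B = \int_A^a + \int_a^b + \int_b^B$ inside the finite partial sum $\sum_{r=-R}^R$, it suffices to compute the two ``boundary'' limits $\lim_R \sum_r \int_A^a f(x) e(rx) \, dx$ and $\lim_R \sum_r \int_b^B f(x) e(rx) \, dx$, and then subtract from both sides.

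The central lemma I would establish is: for an integer $c$ and a real $a \in (c, c+1)$,
\[
\lim_{R \to \infty} \sum_{r = -R}^R \int_c^a f(x) e(rx) \, dx = \frac{f(c)}{2},
\]
with the symmetric statement $\lim_R \sum_r \int_a^{c+1} f(x) e(rx) \, dx = f(c+1)/2$. To prove it, I would interchange the finite sum with the integral to obtain $\int_c^a f(x) D_R(x) \, dx$, where $D_R(x) = \sum_{r=-R}^R e(rx) = \sin((2R+1)\pi x)/\sin(\pi x)$ is the Dirichlet kernel. After shifting so $c = 0$, I would decompose $f(x) = f(0) + (f(x) - f(0))$: the constant piece gives $f(0)\int_0^a D_R(x)\, dx$, which tends to $f(0)/2$ by the classical Dirichlet integral evaluation on $(0,1)$; the remainder is $\int_0^a \bigl((f(x) - f(0))/\sin(\pi x)\bigr)\sin((2R+1)\pi x)\, dx$, which tends to $0$ by the Riemann--Lebesgue lemma, since $f \in C^2$ guarantees that $(f(x)-f(0))/\sin(\pi x)$ extends continuously to $[0,a]$ (as $\sin(\pi x) \sim \pi x$ near $0$) and hence lies in $L^1$.

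With the central lemma in hand, I would compute $\lim_R \sum_r \int_A^a f(x) e(rx) \, dx$ by telescoping across the unit intervals $[A, A+1], [A+1, A+2], \ldots, [\lfloor a \rfloor, a]$: on each unit interval with integer endpoints, Huxley's formula applies directly and yields $\tfrac{1}{2}(f(n)+f(n+1))$, while on the final (possibly non-integer) tail the central lemma yields $\tfrac{1}{2}f(\lfloor a\rfloor)$ (or $0$ if $a \in \mathbb{Z}$). Summing these contributions and the analogous expression for $\int_b^B$, then subtracting from Huxley's result on $[A, B]$, collapses exactly into $\sideset{}{^*}\sum_{a \le n \le b} f(n)$.

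The main obstacle I anticipate is bookkeeping the $\tfrac{1}{2}$-weights: whenever $a$ or $b$ coincides with an integer, contributions from Huxley's boundary weighting, from the central lemma, and from the starring convention on the target sum all introduce factors of $\tfrac{1}{2}$ that must align. The resulting case analysis (both endpoints integer, exactly one integer, or neither) is routine but must be carried out uniformly so that the telescoping produces the correctly starred sum in every case.
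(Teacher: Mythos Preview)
Your approach is correct but takes a genuinely different route from the paper's. The paper does not reduce to Huxley's integer-endpoint case at all; it works directly on $[a,b]$ by rewriting Euler--Maclaurin (Lemma~\ref{lem:emsum}) with the smoothed sawtooth $\psi$ in place of $s$,
\[
\sideset{}{^*}\sum_{a\le n\le b} f(n)=\int_a^b f(x)\,dx-\psi(b)f(b)+\psi(a)f(a)+\int_a^b\psi(x)f'(x)\,dx,
\]
then substituting the Fourier expansion $\psi(x)=-\tfrac{1}{\pi}\sum_{r\ge 1}\tfrac{\sin(2\pi rx)}{r}$ into the last integral, interchanging limit and integral (the partial sums are uniformly bounded, so dominated convergence applies), and integrating by parts once. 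The boundary terms from integration by parts cancel the $\psi(a)f(a)$ and $\psi(b)f(b)$ terms exactly, and what remains is precisely $\lim_R\sum_{|r|\le R}\int_a^b f(x)e(rx)\,dx$. This is shorter, self-contained, and requires no case analysis on whether $a$ or $b$ is an integer: the choice of $\psi$ rather than $s$ absorbs that distinction automatically. Your Dirichlet-kernel reduction is also valid and arguably more transparent about where the $\tfrac12$-weights come from, but it carries more bookkeeping, and you should note one omission: to invoke Huxley on $[A,B]$ you need $f\in C^2([A,B])$, so you must first extend $f$ smoothly beyond $[a,b]$, which is routine but should be stated.
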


\begin{proof}
We rewrite the Euler-Maclaurin summation formula as
\[
\sideset{}{^*}\sum_{\alpha \le n \le \beta} f(n)=\int_\alpha^\beta f(x) \ dx -\psi(\beta) f(\beta) + \psi(\alpha)f(\alpha) +\int_\alpha^\beta \psi (x) f'(x) \ dx.
\]
But we also have 
\begin{align*}
\int_\alpha^\beta \psi (x) f'(x) \ dx &= -\frac{1}{\pi} \lim_{R\to \infty} \int_\alpha^\beta \sum_{r=1}^R \frac{sin(2 \pi rx)}{r}f'(x) \ dx \\
&= \psi(\beta)f(\alpha)-\psi(\alpha)f(\alpha)+\lim_{R\to \infty}\sum_{r=1}^R \int_\alpha^\beta 2\cos(2\pi r x) f(x) \ dx\\
&= \psi(\beta)f(\alpha)-\psi(\alpha)f(\alpha)+\lim_{R\to\infty}\sum_{r=1}^R \int_\alpha^\beta (e(rx)+e(-rx)) f(x) \ dx,
\end{align*}
by integration by parts, which completes the proof.
\end{proof}

We end this section with two more specific propositions.  The first will give us good bounds on the partial sums and tails of $\psi(x,\epsilon)$, our modified sawtooth function, as well as guarantee its convergence.  The second will show that the integrals in Theorem \ref{thm:main} involving the $W$ and $r$ functions will converge.

\begin{prop}\label{prop:fouriertail} Suppose $\beta> \alpha\ge 1$, $\epsilon \in [-1/2,1/2]$, and $x\in \mathbb{R}$.  Then
 \[
  \sum_{\alpha\le |m+\epsilon|\le \beta} \frac{e(mx)}{m+\epsilon}=O\left(\min\left\{1,\dfrac{1}{\alpha\| x\|^*}  \right\}\right) 
 \]
where the implicit constant is uniform.  Also, the sum is convergent as $\beta$ tends to $\infty$ and the same bounds hold in this case.
\end{prop}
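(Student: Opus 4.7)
The plan is to split the range of summation according to the sign of $m+\epsilon$, handle each piece by Abel summation to obtain the $O(1/(\alpha\|x\|^*))$ bound, and then exploit cancellation between the positive and negative pieces to extract the separate $O(1)$ bound; the stated minimum then follows, and convergence as $\beta\to\infty$ will fall out of the Abel-summation estimate.

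Since $\alpha\ge 1$ and $|\epsilon|\le 1/2$, the index $m=0$ never appears. Let $A^+=\{m\in\mathbb{Z}_{>0}: \alpha\le m+\epsilon\le\beta\}$ and $A^-=\{m\in\mathbb{Z}_{>0}: \alpha\le m-\epsilon\le\beta\}$, so after substituting $m\mapsto -m$ in the negative piece the sum becomes
\[
\sum_{m\in A^+}\frac{e(mx)}{m+\epsilon}\;-\;\sum_{m\in A^-}\frac{e(-mx)}{m-\epsilon}.
\]

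For the bound $O(1/(\alpha\|x\|))$ in the regime $\|x\|>0$, I would apply partial summation to each piece. With $S_n=\sum_{k=1}^n e(\pm kx)$ satisfying $|S_n|\le 1/(2\|x\|)$ from the geometric series, Abel summation yields boundary terms of size $O(1/(\alpha\|x\|))$ and an interior sum dominated by $(1/\|x\|)\sum_{m\ge\alpha}1/(m(m+1))=O(1/(\alpha\|x\|))$. The same estimate shows each piece is Cauchy as $\beta\to\infty$, giving convergence.

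For the $O(1)$ bound I would combine the two pieces term-by-term on $A^+\cap A^-$, which differs from either set by at most two integers at the endpoints (each contributing $O(1/\alpha)=O(1)$). The paired summand simplifies via a common denominator to
\[
\frac{e(mx)}{m+\epsilon}-\frac{e(-mx)}{m-\epsilon}=\frac{2im\sin(2\pi mx)-2\epsilon\cos(2\pi mx)}{m^2-\epsilon^2},
\]
whose leading part is $2i\sin(2\pi mx)/m$ with an absolutely summable remainder of size $O(|\epsilon|/m^2)$ contributing $O(1/\alpha)$. The main sum $\sum_{m\in A^+\cap A^-}\sin(2\pi mx)/m$ is a partial sum of the classical sine Fourier series of the sawtooth function, which is uniformly bounded in its endpoints and in $x$; one verifies this standard fact by splitting the range at $m\asymp 1/\|x\|$, using $|\sin(2\pi mx)|\le 2\pi m\|x\|$ below the cutoff and Abel summation against $|S_n|\le 1/(2\|x\|)$ above it. Taking the minimum of the two estimates then gives the proposition.

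The main obstacle I anticipate is bookkeeping the mismatch between $A^+$ and $A^-$ (offset by $2\epsilon$) and confirming that the symmetric-sum definition of $\psi(x,\epsilon)$ converges in the sense required; but since each boundary discrepancy is $O(1/\alpha)$, it is absorbed into the $O(1)$ bound, and Cauchy-ness of each piece from the Abel step gives the existence of the limit as $\beta\to\infty$ with the same estimates intact.
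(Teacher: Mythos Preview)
Your proposal is correct and follows essentially the same approach as the paper: both arguments strip out $\epsilon$ up to an absolutely summable $O(|\epsilon|/m^2)$ remainder and boundary terms of size $O(1/\alpha)$, pair $m$ with $-m$ to reduce to the sine partial sums $\sum \sin(2\pi m x)/m$, and then bound these by Abel summation combined with the split at $m\asymp 1/\|x\|$. The only cosmetic difference is that the paper removes $\epsilon$ first via $\tfrac{1}{m+\epsilon}=\tfrac{1}{m}-\tfrac{\epsilon}{m(m+\epsilon)}$ and then extracts both the $O(1)$ and $O(1/(\alpha\|x\|))$ bounds from a single Abel-summation estimate, whereas you obtain the two bounds by separate passes; either organization works.
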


\begin{proof}
Without loss of generality, we may replace that $x$ with $\langle x\rangle $ and so may assume that $x\in[-1/2,1/2)$.  We start with the assumption that $\beta$ is finite.

Next, we remove all appearances of the $\epsilon$ from the sum.  We use $C_{\alpha,x,\epsilon}$ and $C'_{\alpha,x,\epsilon}$ to denote constants which only depend on the variables $\alpha$, $x$, and $\epsilon$.
\begin{align*}
 \sum_{\alpha\le |m+\epsilon|\le \beta} \frac{e(mx)}{m+\epsilon} &=  \sum_{\alpha\le |m|\le \beta} \frac{e(mx)}{m+\epsilon} +C_{\alpha,x,\epsilon}+O\left(\frac{1}{\beta}\right)\\
&= \sum_{\alpha\le |m|\le \beta} \frac{e(mx)}{m}-\sum_{\alpha\le |m|\le \beta} e(mx)\left(\frac{\epsilon}{m(m+\epsilon)}\right)\\
&\qquad +C_{\alpha,x,\epsilon}+O\left(\frac{1}{\beta}\right)\\
&= \sum_{\alpha\le |m|\le \beta} \frac{e(mx)}{m}-\sum_{\alpha\le |m|} e(mx)\left(\frac{\epsilon}{m(m+\epsilon)}\right)+O\left(\frac{1}{\beta}\right)\\
&\qquad   +C_{\alpha,x,\epsilon}+O\left(\frac{1}{\beta}\right)\\
&= \sum_{\alpha\le |m|\le \beta} \frac{e(mx)}{m}+C'_{\alpha,x,\epsilon}  +C_{\alpha,x,\epsilon}+O\left(\frac{1}{\beta}\right)
\end{align*}
Here, $C_{\alpha,x,\epsilon}$ contains the terms where $|m|<\alpha\le |m+\epsilon|$ or $|m+\epsilon|<\alpha\le |m|$.  In particular, it is no larger than $2/(\alpha-1/2)=O(1/\alpha)$.  Similarly, $C'_{\alpha,x,\epsilon}$ is dominated by the sum of terms $1/m^2$, and hence is also $O(1/\alpha)$.

Now we pause a moment to show that we may let $\beta=\infty$ with no problems of convergence.  The sum
\[
 \sum_{1\le |m|} \frac{e(mx)}{m}
\]
is, up to a constant multiplier, the Fourier series for the sawtooth function $\psi(x)$ and converges for all $x$.  This implies that 
\[
 \lim_{\beta\to \infty} \sum_{\alpha\le |m|\le \beta} \frac{e(mx)}{m} 
\]
converges for all $\alpha$ and $x$.  Therefore, since
\[
 \lim_{\beta\to \infty} \left(\sum_{\alpha\le |m+\epsilon|\le \beta} \frac{e(mx)}{m+\epsilon}-\sum_{\alpha\le |m|\le \beta} \frac{e(mx)}{m} \right)=C'_{\alpha,x,\epsilon}  +C_{\alpha,x,\epsilon}
\]
we have that 
\[
 \sum_{\alpha\le |m|} \frac{e(mx)}{m+\epsilon}
\]
converges for all $\alpha$, $x$, and $\epsilon$.  

If $x=0$ then 
\[
 \sum_{\alpha\le |m|\le \beta} \frac{e(mx)}{m}=0,
\]
and hence 
\[\sum_{\alpha\le |m+\epsilon|\le \beta} \frac{e(mx)}{m+\epsilon}=O\left(\frac{1}{\alpha}\right).\]
So we may assume for the remainder of the proof that $x\neq 0$.

Now we remove the absolute value in the condition on the sum and apply summation by parts.
\begin{align*}
 \sum_{\alpha\le |m|\le \beta} \frac{e(mx)}{m}&=2 i \sum_{\alpha\le m \le \beta} \frac{\sin(2\pi mx)}{m} \\
&= 2i \sum_{\alpha\le m\le \beta} \left(\frac{1}{m}-\frac{1}{m+1}\right)\left(\sum_{\alpha\le k \le m} \sin(2\pi k x)\right)\\
&= 2i \sum_{\alpha\le m \le \beta} \frac{1}{m^2+m} \cdot \frac{\sin(\pi(m+\lceil\alpha\rceil)x)\sin(\pi(m-\lceil\alpha\rceil)x)}{\sin(\pi x)}
\end{align*}
Since we assumed $x\in[-1/2,1/2]$, we have that $\sin(\pi x)\asymp x$.  For the numerator, we use that $|\sin(y)| \le \min\{1,|y|\}$.  This gives the following estimate, which completes the proof.
\begin{align*}
 \sum_{\alpha\le |m|\le \beta} \frac{e(mx)}{m} &=O\left(\sum_{\alpha\le m\le \beta} \frac{1}{m^2} \min\left\{ \frac{1}{\|x\|},m^2 \|x\| \right\}\right) \\
&= O\left(\sum_{\alpha \le m \le 1/\|x\|} \|x\|\right)+O\left(\sum_{m\ge \max\{\alpha,1/\|x\|\}} \frac{1}{m^2\|x\|}\right)\\
&=\begin{cases}
   O(1) & \text{if }\alpha \|x\|\le 1\\
   O\left(\dfrac{1}{\alpha \|x\|}\right) & \text{otherwise}
  \end{cases}
\end{align*}
\end{proof}

\begin{prop}\label{prop:integrability}
Suppose the conditions of Section \ref{section:assumptions} hold, then the function $|W_0'(x)|+|W_0(x)\cdot r_0(x)|$ is integrable on $J_0$ and the function $|W_+'(x)|+|W_-'(x)|+|W_+(x)\cdot r_+'(x)|+|W_-(x)\cdot r_-'(x)|$ is integrable on $J_\pm$.
\end{prop}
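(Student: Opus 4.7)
The plan is to establish integrability component-by-component. On each maximal interval $(a',b')$ in $J_0$ or $J_\pm$, I will show that the integrand extends continuously to the closure $[a',b']$ (hence is bounded), and then sum over components using that $J$ is bounded.

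For $J_\pm$, the approach splits according to the dichotomy from Section \ref{section:assumptions}. In case 1, where $H^2 \equiv G$ on the component, the two branches coincide with $H\pm\sqrt{H^2-G}=H$, giving $W_\pm = (2g'')^2 g'/H^2 - (2g'')^3(f''g)/H^3$ and $r_\pm = f' - H/(2g'')$. The hypothesis that $H$ does not tend to $0$ at the endpoints forces $|H| \gtrsim 1$ on $[a',b']$; since $G = 12gg''(f'')^2 = H^2 \neq 0$ and $g$, $f''$ are bounded on $J$, the factor $g''$ is likewise bounded away from $0$, and thus $W_\pm$, $W_\pm'$, $r_\pm'$ extend continuously. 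In case 2, where $H^2-G$ stays bounded away from $0$, the function $\sqrt{H^2-G}$ is smooth and bounded below. I then invoke the identity $(H+\sqrt{H^2-G})(H-\sqrt{H^2-G}) = G \neq 0$, together with the continuity of $G$ on the closure, to conclude both denominators $H\pm\sqrt{H^2-G}$ stay bounded away from $0$, so $W_\pm, W_\pm', r_\pm'$ are continuous on $[a',b']$.

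For $J_0$, on each component $g'' \equiv 0$ (so $g$ is affine and $g'$ is constant). The endpoint hypothesis ensures $|g|$ is bounded away from $0$ on $[a',b']$, and $f''$ is bounded away from $0$ throughout $J$ by assumption. Thus the denominator of $W_0 = -H^2 f^{(3)}/(27g(f'')^5)$ is uniformly controlled and $W_0, W_0'$ extend continuously. The only mild subtlety is that $r_0 = f' - 3g(f'')^2/H$ can blow up if $H\to 0$ at an endpoint; however, the explicit factor of $H^2$ in $W_0$ cancels this singularity in the product:
\[
W_0 r_0' = -\frac{H^2 f^{(3)}f''}{27g(f'')^5} + \frac{Hf^{(3)}\bigl(3g'(f'')^2 + 6gf''f^{(3)}\bigr)}{27g(f'')^5} - \frac{f^{(3)}H'}{9(f'')^3},
\]
each term remaining bounded as $H\to 0$. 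Hence $|W_0'| + |W_0 r_0'|$ extends continuously to $[a',b']$.

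The remaining task is to pass from per-component boundedness to integrability of the sum over components. Since $f$, $g$ and their derivatives are continuous on the compact interval $J$, the upper bounds for the integrand on each closed component depend only on the a priori bounds for these derivatives and on the uniform lower bounds extracted above, giving a single constant $C$ bounding the integrand on every component. Because $J_0, J_\pm \subseteq J$ have total Lebesgue measure $\le |J| < \infty$, the total integrals are bounded by $C|J|$. The main obstacle I expect is verifying the algebraic cancellation in $W_0 r_0'$ and, in case 2 of $J_\pm$, confirming that $G$ (and hence the denominators $H\pm\sqrt{H^2-G}$) cannot fail to be bounded away from $0$ on any closed component under the stated hypotheses—this is where the assumption that $H^2-G$ does not tend to $0$ at endpoints is essential.
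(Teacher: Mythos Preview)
Your argument for $J_0$ and for case~1 of $J_\pm$ is essentially the paper's, and is fine. The gap is in your case~2 of $J_\pm$. You invoke the identity $(H+\sqrt{H^2-G})(H-\sqrt{H^2-G})=G$ and then try to conclude that both factors are bounded away from $0$ on the closure of a component because ``$G\neq 0$.'' But $G\neq 0$ is only guaranteed on the \emph{open} component; at an endpoint of a maximal interval of $J_\pm$ one of the two defining conditions must fail, and ``$G\to 0$'' (equivalently $g''\to 0$) is one of the natural ways this happens. The hypothesis you cite---that $H^2-G$ does not tend to $0$ at the endpoints---says nothing about $G$ itself. So at such an endpoint one branch $H\pm\sqrt{H^2-G}$ tends to $2H$ (harmless), while the other tends to $0$, and your bounded-away-from-zero claim collapses. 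The same endpoint also kills your claim that $r_\pm'$ extends continuously, since $r_\pm=f'-(H\pm\sqrt{H^2-G})/(2g'')$ has $g''$ in the denominator.

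What actually rescues integrability in this regime is a cancellation you have not exploited: as $g''\to 0$ one has
\[
\frac{H\pm\sqrt{H^2-G}}{2g''}\;\longrightarrow\;\frac{3g(f'')^2}{H}\neq 0,
\]
so a factor of $g''$ in the numerator of $W_\pm$ trades off against a factor of $H\pm\sqrt{H^2-G}$ in the denominator. This makes $W_\pm$ and the product $W_\pm r_\pm'$ extend continuously, but showing the same for $W_\pm'$ requires differentiating $(H\pm\sqrt{H^2-G})/(2g'')$ and checking that the apparent $1/g''$ and $1/\sqrt{H^2-G}$ singularities in that derivative are also controlled---this is a genuine computation, not a soft continuity argument. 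Your proposal does not contain the idea needed for this step.
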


\begin{proof}
Recall that
\begin{align*}
H&=gf^{(3)}+3g'f''\\
G&=12gg''(f'')^2\\ 
W_\pm &=\frac{(2g'')^2g'}{(H\pm \sqrt{H^2-G})^2} -\frac{(2g'')^3(f''g)}{(H\pm \sqrt{H^2-G})^3} \\
W_0 &= -\frac{H^2 f^{(3)}}{27g(f'')^5}\\
r_\pm &=f'-\frac{H\pm \sqrt{H^2-G}}{2g''}\\
r_0 &=f'-\frac{3g(f'')^2}{H},
\end{align*}
The functions $H$ and $G$ are continuous and bounded on $J$.  The functions $W_0$ and $r_0$ are continuous inside $J_0$, and the functions $W_\pm$ and $r_\pm$ are continuous inside $J_\pm$.  The only possible barrier to integrability is the presence of a zero in the denominator of the function at an endpoint of an interval.

The only terms in the denominator of $W_0'$ are $g$ and $f''$, which are both bounded away from $0$ on $J_0$.  The only terms in the denominator of $W_0$ are $g$ and $f''$ again, but $r_0'$ has a factor of $H^2$ in the denominator; however, in $|W_0(x)\cdot r_0(x)|$, the $H^2$ in the denominator of $r_0'$ is cancelled by the factor of $H^2$ in the numerator of $W_0(x)$, so this too has no zeroes in the denominator at the endpoints of $J_0$.

The case when we are considering an interval $(a',b')$ where $H^2(x)-G(x)=0$ but $H(x)$ does not equal nor tend to $0$ is analogous.  So we will assume we are in the case where $H^2(x)-G(x)$ does not equal nor tend to $0$ on the interval in question for the remainder of this proof.

We have
\[
r_\pm'(x)=f''-\left(  \frac{H\pm\sqrt{H^2-G}}{2g''}\right)'
\]
and
\begin{align*}
W_\pm'(x)&=\left(  \frac{H\pm\sqrt{H^2-G}}{2g''}\right)^{-3}(f^{(3)}g+f''g')-\left(  \frac{H\pm\sqrt{H^2-G}}{2g''}\right)^{-2}g''\\
&\qquad -3\left(  \frac{H\pm\sqrt{H^2-G}}{2g''}\right)^{-4}\left(  \frac{H\pm\sqrt{H^2-G}}{2g''}\right)'(f'' g)\\
&\qquad +2\left(  \frac{H\pm\sqrt{H^2-G}}{2g''}\right)^{-3}\left(  \frac{H\pm\sqrt{H^2-G}}{2g''}\right)'g'.
\end{align*}
In addition,
\begin{align}
\left(  \frac{H\pm\sqrt{H^2-G}}{2g''}\right)'&=\frac{H\pm\sqrt{H^2-G}}{2g''^2}g^{(3)} \label{eq:HoverG1}\\
&\qquad -\frac{1}{2g''\sqrt{H^2-G}}\left(H'\left(\sqrt{H^2-G} \pm H\right) \mp\frac{G'}{2}\right) \notag
\end{align}
So, the only terms in the denominator of $r_\pm'$ are $g''$ and $\sqrt{H^2-G}$, and the only terms in the denominator of $W_\pm'$ are $H\pm\sqrt{H^2-G}$ and $\sqrt{H^2-G}$ as all the $g''$ terms cancel.

The technique required will change slightly depending on whether we consider the $+$ terms or the $-$ terms.  As $G$ tends to $0$ (which is equivalent to $g''$ tending to $0$), $H\pm \sqrt{H^2-G}$ will tend to either $2H$ or $0$.  By the assumptions of Section \ref{section:assumptions}, if $G$ tends to $0$ at the endpoint of an interval in $J_\pm$, then $H$ cannot tend to $0$ at the same point.

If the sign is chosen so $H\pm\sqrt{H^2-G}$ tends to $2H$, then the terms $H\pm\sqrt{H^2-G}$ and $\sqrt{H^2-G}$ in the denominator of $W_\pm'(x)$ do not vanish, and the two copies of $g''$ in the denominator of $r_\pm'(x)$ are cancelled by the two copies of $g''$ in the numerator of $W_\pm(x)$.  Therefore, in this case, there are no zeroes in the denominator.

For the remaining case, when $H\pm\sqrt{H^2-G}\to 0$, we have that
\begin{align}
\frac{H\pm\sqrt{H^2-G}}{2g''} &= \frac{|H|}{2g''}\left(\frac{H}{|H|}\pm\sqrt{1-\frac{G}{H^2}}\right)\notag\\
&= \frac{|H|}{2g''}\left(\mp \frac{G}{2H^2} +O\left(\frac{G^2}{H^4}\right) \right)  \label{eq:HoverG}\\
&= \frac{3g(f'')^2}{H}(1+O(g'')),\notag
\end{align}
where the implcit constant depends on the size of $H$ near this point.  In particular, since $g$ and $f''$ do not approach $0$ at the endpoints of $J_\pm$, equation \eqref{eq:HoverG} implies that a copy of $H\pm\sqrt{H^2-G}$ tending to $0$ in the denominator of a function can be cancelled by a copy of $g''$ in the numerator to prevent the presence of a zero in the denominator.  In particular, $W_\pm(x)$ will not have a zero in the denominator.

By \eqref{eq:HoverG}, the only term in $|W_\pm'(x)|+|W_\pm(x) \cdot r_\pm'(x)|$ that could produce a zero in the denominator is the derivative
\begin{equation}
\left(  \frac{H\pm\sqrt{H^2-G}}{2g''}\right)'.  \label{eq:HoverG'}
\end{equation}
Therefore, it suffices to show that \eqref{eq:HoverG'} has no zeroes in the denominator at an endpoint of $J_\pm$.

By applying \eqref{eq:HoverG} to line \eqref{eq:HoverG1} and noting that in this case $\pm(H^2-G)^{-1/2}=-H^{-1}(1+O(G))$, we have
\begin{align*}
\left(  \frac{H\pm\sqrt{H^2-G}}{2g''}\right)'&=\frac{3g(f'')^2g^{(3)}}{g''}\left(\frac{1}{H}\pm\frac{1}{\sqrt{H^2-G}}\right)\mp \frac{3g(f'')^2H'}{2\sqrt{H^2-G}}\\
&\qquad\pm\frac{12g'(f'')^2+24gf''f^{(3)}}{4\sqrt{H^2-G}}+O(1)\\
&=\frac{3g(f'')^2g^{(3)}}{g''}\cdot O\left(\frac{G}{H}\right)\mp \frac{3g(f'')^2H'}{2\sqrt{H^2-G}}\\
&\qquad\pm\frac{12g'(f'')^2+24gf''f^{(3)}}{4\sqrt{H^2-G}}+O(1)\\
&= \mp \frac{3g(f'')^2H'}{2\sqrt{H^2-G}}\pm\frac{12g'(f'')^2+24gf''f^{(3)}}{4\sqrt{H^2-G}}+O(1).
\end{align*}
Since $\sqrt{H^2-G}$ does not tend to $0$ on $J_\pm$, the derivative \eqref{eq:HoverG'} has no zeroes in the denominator.
\end{proof}

%%%%%%%%%%%%%%%%%%%%%%%%%%%%%%%%%%%%%%%%%%%%%
\section{Proof of Theorem \ref{thm:main}}
%%%%%%%%%%%%%%%%%%%%%%%%%%%%%%%%%%%%%%%%%%%%%
%%%%
\subsection{The initial step}\label{section:prooffirststep}
%%%%

We begin by applying Poisson summation (Lemma \ref{lem:poissonsummation}).  This gives $$\sideset{}{^*}\sum_{a\le n \le b} g(n)e(f(n)) = \lim_{R\to \infty} \sum_{r=-R}^R \int_a^b g(x)e(f(x)-rx) \ dx.$$

We wish to alter the end-points of some of these integrals.  Define $a_r$, the new left endpoint, by 
\begin{equation}\label{eq:a_rdef}
 a_r:= \begin{cases} 
a+M(a) & \text{if }0< f'(a)-r <f''(a)\\
a-M(a) &  \text{if } 0 > f'(a)-r > -f''(a)\\
a & \text{otherwise} \\
 \end{cases}
\end{equation}
and $b_r$, the new right endpoint, by
\[
b_r:= \begin{cases} 
b+M(b) & \text{if }0< f'(b)-r<f''(b)\\
b-M(b) &  \text{if } 0 > f'(b)-r > -f''(b)\\
b & \text{otherwise} \\
 \end{cases}.
\]
Note that $f'(a)-r$ is positive, if and only if $x_r$ lies to the left of $a$ and vice-versa.  The transformation from $[a,b]$ to $[a_r,b_r]$ has the effect that if $x_r$ is close to---but not equal to---an endpoint $a$ or $b$, then we shift that endpoint away from $x_r$ by a distance $M(a)$ or $M(b)$, respectively.

In the statement of Theorem \ref{thm:main}, $m_x$ is defined by
\[
 m_x:=\left| \mathbb{Z} \cap (f'(x)-f''(x),f'(x)+f''(x))\setminus \{f'(x)\} \right|.
\]
The values $m_a$ and $m_b$ thus count the number of $r$ such that $a_r\neq a$ and $b_r \neq b$, respectively.

After altering the end-points, we have
\begin{align}
 \sideset{}{^*}\sum_{a\le n \le b} g(n)e(f(n)) &=  \lim_{R\to \infty} \sum_{r=-R}^R \int_{a_r}^{b_r} g(x)e(f(x)-rx) \ dx \notag \\
&\qquad - \lim_{R\to \infty} \sum_{r=-R}^R \left( \int_{a_r}^a + \int_{b_r}^b \right) g(x)e(f(x)-rx) \ dx \label{eq:shiftedintegral}
\end{align}
Since there are only finitely many values of $r$ for which $a_r\neq a$ and $b_r\neq b$, the sum on line \eqref{eq:shiftedintegral} is finite and so we may omit the limit and sum from $-\infty$ to $\infty$.

First, consider those integrals in the sum on line \eqref{eq:shiftedintegral} arising from $0 < |f'(a)-r| < 1$.  If $\langle f'(a)\rangle=0$ or if $m_a=0$, there are no such integrals, and so the total contribution of this (null) set of integrals is $0$.  Otherwise, there are at most $2$ such integrals, each of which by Proposition \ref{prop:2ndderivtest} (our variant of the first and second derivative tests) is bounded by 
\begin{equation}\label{eq:smallrbound}
O\left(\min\left\{ \frac{U(a)}{\sqrt{f''(a)}},\frac{U(a)}{\| f'(a)\|} \right\}\right).
\end{equation}
Thus the contribution of these integrals is $O(\Delta_1(a))$.

There can only be additional integrals in line \eqref{eq:shiftedintegral} if $f''(a)\ge 1$, so we will assume as such when bounding them.

Consider next those integrals from $a_r$ to $a$ arising from $1 \le|f'(a)-r| < \sqrt{f''(a)}$.  There are $\ll\sqrt{f''(a)}$ such integrals, each of which, by Proposition \ref{prop:2ndderivtest} is bounded by 
\[
O\left( \frac{U(a)}{\sqrt{f''(a)}}\right).
\]
So the total contribution of these integrals is bounded by $O(g(a)).$

Lastly, we consider the sum of those integrals in line \eqref{eq:shiftedintegral} arising from $\sqrt{f''(a)}\le|f'(a)-r| <f''(a)$.  In particular, let
\[
S_1:= \sum_{\sqrt{f''(a)}\le |f'(a)-r| < f''(a)} \int_{a_r}^a g(x) e(f(x)-rx) \ dx
\]
denote the sum in question.  We apply Proposition \ref{prop:intbypartsvar} to each integral in $S_1$ to obtain
\begin{align}
S_1 &=\sum_r \frac{g(a)}{2\pi i (f'(a)-r)} e(f(a)-ra) \label{eq:S1 1}\\
& \qquad+  \sum_r  O \left( \frac{U(a)}{f''(a)M(a)}  \right)\label{eq:S1 2}\\
& \qquad + \sum_r \left( O\left( \frac{U(a)}{M(a)(f'(a)-r)^2} \right)  + O \left( \frac{U(a)f''(a)}{(f'(a)-r)^3} \right)  \right). \label{eq:S1 3}
\end{align}
Here, each sum is over all $r$ satisfying $\sqrt{f''(a)}\le|f'(a)-r| <f''(a)$.

For the sum on line \eqref{eq:S1 1}, we apply a change of variables $m+\epsilon=f'(a)-r$, where $\epsilon=\langle f'(a)\rangle$, and then apply Proposition \ref{prop:fouriertail}.  The sum is then bounded by $O(U(a)/\sqrt{f''(a)})=O(U(a))$, since we are under the temporary assumption that $f''(a)\ge 1$.

The sum on line \eqref{eq:S1 2} has at most $2\cdot f''(a)$ terms, so is bounded by $$O\left(  \frac{U(a)}{M(a)} \right).$$

By Euler-Maclaurin summation (Lemma \ref{lem:emsum}), the sum on line \eqref{eq:S1 3} is bounded by $$O\left( U(a)\left( \frac{1}{\sqrt{f''(a)}M(a)}+1 \right) \right).$$

Thus the total contribution of the integrals from $a_r$ to $a$ in line \eqref{eq:shiftedintegral} arising from $1\le |f'(a)-r|< f''(a)$ is bounded by
\begin{align*}
&O\left( U(a)\left( 1+\frac{1}{M(a)}+\frac{1}{\sqrt{f''(a)}M(a)} \right) \right)\\
&\qquad = O(\mathcal{D}^\circ_{f'' \ge 1}(a)),
\end{align*}
where 
\[
\mathcal{D}^\circ_{f'' \ge 1}(a)= U(a)\left( 1+\frac{1}{M(a)}+\frac{1}{\sqrt{f''(a)}M(a)} \right).
\]

Similarly the contribution of the integrals from $b_r$ to $b$ in line \eqref{eq:shiftedintegral} is bounded by 
\begin{align*}
O(\mathcal{D}^\circ_{f'' \ge 1}(b))+O(\Delta_1(b)).
\end{align*}

Therefore, 
\begin{align}
 \sideset{}{^*}\sum_{a\le n \le b} g(n)e(f(n)) &= \lim_{R\to \infty} \sum_{r=-R}^R \int_{a_r}^{b_r} g(x)e(f(x)-rx) \ dx\notag \\
&\qquad + O(\mathcal{D}^\circ_{f'' \ge 1}(a))+ O(\mathcal{D}^\circ_{f'' \ge 1}(b))\label{eq:endfirststep} \\
&\qquad + O(\Delta_1(a)+ \Delta_1(b)). \notag
\end{align}

%%%%%%%%%%%%%%%%%%%%%%%%%%%%%%%%%%%%%
\subsection{The half stationary phase estimate}\label{section:proofhalf}
%%%%%%%%%%%%%%%%%%%%%%%%%%%%%%%%%%%%%

Suppose that $f'(a)=r\in\mathbb{Z}$.  We will need to estimate the integral at this particular $r$ in \eqref{eq:endfirststep} separately.

We shall break the integral into a piece surrounding the stationary phase point and an integral away from the stationary phase point.  In particular, we write $$\int_{a}^{b_r}g(x)e(f(x)-rx)\ dx=\left(\int_{a}^{\beta_r}+ \int_{\beta_r}^{b_r}\right)g(x)e(f(x)-rx)\ dx,$$ where $\beta_r=\min\{a+M(a),b_r  \}$.  Since $x_r=a$ is to the left of $b$, we have that $b_r$ either equals $b$ or $b+M(b)$.

By Propositions \ref{prop:main1} and \ref{prop:main2}, we have
\begin{align}
&\int_{a}^{\beta_r} g(y)e(f(y)-ry)\ dy\notag\\
&\qquad =\frac{g(a)e(f(a)-ra+1/8)}{2\sqrt{f''(a)}} + \frac{g(\beta_r) e(f(\beta_r)-r\beta_r)}{2\pi i (f'(\beta_r)-r)}  \notag \\ 
&\qquad \qquad + \dfrac{g(a)f^{(3)}(a)e(f(a))}{6\pi i f''(a)^2}-\dfrac{g'(a)e(f(a))}{2\pi i f''(a)} + O\left(  \frac{U(a)}{f''(a)^2 (b-a)^3} \right)\notag\\
&\qquad \qquad +O\left(  \frac{U(a)}{f''(a)^2 M(a)^3} + \frac{U(a)}{f''(a)^{3/2}M(a)^2}\right)\notag\\
  &\qquad =\frac{g(a)e(f(a)-ra+1/8)}{2\sqrt{f''(a)}} + \frac{g(\beta_r) e(f(\beta_r)-r\beta_r)}{2\pi i (f'(\beta_r)-r)} +\mathcal{D}^*(a) \label{eq:halfstationaryata}\\
&\qquad \qquad +  O(\Delta_1(a)+\Delta_2(a)).   \notag
\end{align}
Here we used the bound
\[
\frac{1}{(\beta_r - a)^3} = O\left(\frac{1}{(b-a)^3}\right) + O\left( \frac{1}{M(a)^3} \right).
\]

Similarly, if $f'(b)=r\in\mathbb{Z}$, then we dissect the integral from $a_r$ to $b$ in \eqref{eq:endfirststep} in a similar way, obtaining,
\begin{align}
&\left(\int_{a_r}^{\alpha_r}+\int_{\alpha_r}^b\right) g(x)e(f(x)-rx)\ dx\notag\\
&\qquad=\frac{g(b)e(f(b)-rb+1/8)}{2\sqrt{f''(b)}} - \frac{g(\alpha_r) e(f(\alpha_r)-r\alpha_r)}{2\pi i (f'(\alpha_r)-r)}-\mathcal{D}^*(b)\label{eq:halfstationaryatb}\\
&\qquad \qquad +O(\Delta_1(b))+O(\Delta_2(b))+ \int_{a_r}^{\alpha_r} g(x)e(f(x)-rx)\ dx, \notag
\end{align}
where $\alpha_r=\max\{b-M(b),a_r\}$.

%%%%%%%%%%%%%%%%%%%%%%%%%
\subsection{The full stationary phase estimates}\label{section:prooffull}
%%%%%%%%%%%%%%%%%%%%%%%%%

For all remaining integrals with a stationary phase point in \eqref{eq:endfirststep}, we may assume that $|f'(a)-r|,|f'(b)-r|>0$.  As before, we denote the stationary phase point corresponding to a given $r$ by $x_r$.  For each such $r\in(f'(a),f'(b))$, we write the corresponding integral in line \eqref{eq:endfirststep} as 
$$\int_{a_r}^{b_r} g(x)e(f(x)-rx) \ dx = \left( \int_{a_r}^{\alpha_r} + \int_{\alpha_r}^{\beta_r} + \int_{\beta_r}^{b_r}  \right) g(x)e(f(x)-rx) \ dx ,$$
where $\alpha_r=\max\{x_r-M(x_r),a_r\}$ and $\beta_r=\min\{x_r+M(x_r),b_r\}$.  By construction, $[\alpha_r,\beta_r]=I_{x_r}\cap [a_r,b_r]$ with $I_x$ as in the statement of condition $(M)$.

For this section, we shall focus on the contribution of the middle terms.  Consider 
\[
S_2:=\sum_{f'(a)<r<f'(b)} \int_{\alpha_r}^{\beta_r} g(x)e(f(x)-rx) \ dx.
\]
Applying Propositions \ref{prop:main1} and \ref{prop:main2} with $c=x_r$ to $S_2$, this becomes 
\begin{align}
S_2&=\sum_{f'(a)<r<f'(b)}  \frac{g(x_r)e(f(x_r)-rx_r+\frac{1}{8})}{\sqrt{f''(x_r)}} \label{eq:fullstationary}\\ 
&\qquad+ \sum_{f'(a)<r<f'(b)}O\left( \frac{g(x_r)}{f''(x_r)^2}\left( \frac{1}{(x_r-a_r)^3} +\frac{1}{(b_r-x_r)^3}   \right)\right)\label{eq:fullstationaryerror1}\\ 
&\qquad+\sum_{f'(a)<r<f'(b)} O\left( \frac{g(x_r)}{f''(x_r)^ 2M(x_r)^3} \left(1+\sqrt{f''(x_r)}M(x_r)  \right)\right)\label{eq:fullstationaryerror2}\\
&\qquad -\sum_{f'(a)<r<f'(b)} \frac{g(\alpha_r)e(f(\alpha_r)-r\alpha_r)}{2\pi i (f'(\alpha_r)-r)}  +\sum_{f'(a)<r<f'(b)} \frac{g(\beta_r)e(f(\beta_r)-r\beta_r)}{2\pi i (f'(\beta_r)-r)}  \label{eq:fullstationaryendpoints} 
\end{align}

The sum in line \eqref{eq:fullstationary} added to the first terms in lines \eqref{eq:halfstationaryata} and \eqref{eq:halfstationaryatb}---if they exist---sum to
\[
\sideset{}{^*}\sum_{f'(a)\le r \le f'(b)}  \frac{g(x_r)e(f(x_r)-rx_r+\frac{1}{8})}{\sqrt{f''(x_r)}},
\]
the main term in the van der Corput transform.

To begin evaluating the sum in line \eqref{eq:fullstationaryerror1}, consider values $r$ such that $0<x_r-a \le \min\{C_2^{-1}, M(a)\}$.  By the mean value theorem, we can write $x_r-a=(r-f'(a))/f''(\zeta)$ for some $\zeta \in (a,x_r) \subset (a,a+M(a))$.  By the bounds of condition $(M)$, the $r$ under consideration must also satisfy $0<r-f'(a) \le f''(a)$, which, in turn implies that $x_r-a_r= x_r-a+M(a)\ge M(a)$.

Therefore, we split the sum over 
\[
\frac{g(x_r)}{f''(x_r)^2(x_r-a_r)^3}
\]
in line \eqref{eq:fullstationaryerror1} into two pieces.  The first piece, over all $r$ such that $x_r-a \le \min\{ C_2^{-1}, M(a)\}$, has at most $m_a$ terms, with each term bounded by 
\begin{equation} \label{eq:nearaerror}
\frac{g(x_r)}{f''(x_r)^2 M(a)^3}\ll \frac{U(a)}{f''(a)^2 M(a)^3}.
\end{equation}
Thus the total contribution of these terms is at most 
\[
O\left( \frac{U(a)m_a}{f''(a)^2 M(a)^3}\right) = O(\Delta_2(a)),
\]
since $m(a)=O(1+f''(a))$.

The second piece, over all remaining $r$ between $f'(a)$ and $f'(b)$, is bounded by 
\begin{equation} \label{eq:fullstationaryerror1mod}
\sum_{\overline{a}\le x_r\le b} \frac{g(x_r)}{f''(x_r)^2(x_r-a)^3},
\end{equation}
where $\overline{a}$ is the smallest value in the interval $[a+\min\{M(a),C_2^{-1}\},b]$ such that $f'(\overline{a})$ is an integer.  Note that if no such integer exists, then this sum is empty.  We now apply Proposition \ref{prop:inversesum} to \eqref{eq:fullstationaryerror1mod} with 
\[
G(y)=\frac{g(y)}{f''(y)^2(y-a)^3}                                                                                                                                                                                                                                                                                                                                                    \]
and $F(y)=f'(y)$ on the interval $[f'(\overline{a}),f'(b)]$.  Using the bounds from condition $(M)$ liberally, we see that \eqref{eq:fullstationaryerror1mod} is bounded by
\begin{align}
&O\left(\int_{\overline{a}}^{b} \frac{U(x)}{f''(x)^2(x-a)^3}\left(f''(x)+\frac{1}{M(x)}+\frac{1}{x-a}  \right)  \ dx\right)\label{eq:delta3 1}\\
&\qquad\qquad + O\left( \frac{U(a)}{f''(a)^2(\overline{a}-a)^3} \right)   +O\left(\frac{U(b)}{f''(b)^2 (b-a)^3}  \right) \label{eq:delta3 2}\\
&\qquad =O(\Delta_3(a)).\notag
\end{align}
Since $f''(x)>0$ on $[a,b]$, $f'$ has no local extrema on this interval, so the final term from Proposition \ref{prop:inversesum} does not appear.

We have a similar bound for the second sum in line \eqref{eq:fullstationaryerror1}:
\[
\sum_{f'(a)<r<f'(b)} \frac{g(x_r)}{(f''(x_r))^2(b_r-x_r)^3}=O(\Delta_2(b))+O(\Delta_3(b)).
\]

We apply Proposition \ref{prop:inversesum} again to the sum on line \eqref{eq:fullstationaryerror2}  to get
\begin{align}
&\sum_{f'(a)<r<f'(b)} \frac{g(x_r)}{f''(x_r)^{2} M(x_r)^3}\left(1+\sqrt{f''(x_r)}M(x_r)  \right) \label{eq:GoverFEinverse}\\
&\qquad =O\left( \int_a^b \frac{U(x)}{f''(x) M(x)^3}\left( 1+\sqrt{f''(x)}M(x) \right)  \left(1+ \frac{1+|M'(x)|}{f''(x) M(x)}  \right)\ dx\right) \label{eq:delta4 1a} \\
&\qquad\qquad+ O\left(  \frac{U(a)}{f''(a)^2M(a)^3}\left(1+\sqrt{f''(a)}M(a)  \right)\right) \notag \\
&\qquad\qquad + O\left( \frac{U(b)}{f''(b)^2 M(b)^3} \left(1+\sqrt{f''(b)}M(b)  \right)\right) \label{eq:delta4 2a} \\
&\qquad=O(\Delta_4)+O(\Delta_2(a)+\Delta_2(b)) . \notag
\end{align}

Now consider
\begin{align*}
S_3:=&\lim_{R\to \infty} \sum_{\substack{|r|\le R \\ r< f'(a) \text{ or } r > f'(b)  }} \int_{a_r}^{b_r} g(x) e(f(x)-rx) \ dx \\
&\qquad +\sum_{f'(a)\le r < f'(b)} \left( \frac{g(\beta_r)e(f(\beta_r)-r\beta_r)}{2\pi i (f'(\beta_r)-r)} +\int_{\beta_r}^{b_r}  g(x) e(f(x)-rx) \ dx \right) \\
&\qquad +\sum_{f'(a) < r \le f'(b)} \left( \int_{a_r}^{\alpha_r}g(x) e(f(x)-rx) \ dx-\frac{g(\alpha_r)e(f(\alpha_r)-r\alpha_r)}{2\pi i (f'(\alpha_r)-r)}\right).
\end{align*}
Note that the remaining terms of $S_2$ in line \eqref{eq:fullstationaryendpoints} as well as the second terms from lines \eqref{eq:halfstationaryata} and \eqref{eq:halfstationaryatb}---if they exist---appear in $S_3$.

We have thus far shown that 
\begin{align*}
\sideset{}{^*}\sum_{a\le n \le b} g(n) e(f(n)) &= \sideset{}{^*}\sum_{f'(a)\le r\le f'(b)} \frac{g(x_r)e(f(x_r)-rx_r+\frac{1}{8})}{\sqrt{f''(x_r)}}+S_3-\mathcal{D}^*(b)+\mathcal{D}^*(a)\\
&\qquad + O(\mathcal{D}^\circ_{f'' \ge 1}(a))+ O(\mathcal{D}^\circ_{f'' \ge 1}(b))+ \sum_{i=1}^4 O(\Delta_i(a)+\Delta_i(b))+O(\Delta_4).
\end{align*}

%%%%%%%%%%%%%%%%%%%%
\subsection{The remaining integrals}\label{section:remainingintegrals}
%%%%%%%%%%%%%%%%%%%%%

For each integral in $S_3$, we apply Proposition \ref{prop:intbyparts}.  This gives
\begin{align}
S_3&=- \lim_{R\to \infty}\sum_{r\neq f'(a)} \frac{g(a_r)e(f(a_r)-ra_r)}{2\pi i (f'(a_r)-a)}+\lim_{R\to \infty} \sum_{r\neq f'(b)} \frac{g(b_r)e(f(b_r)-rb_r)}{2\pi i (f'(b_r)-r)}\label{eq:s3 1}\\
&\qquad +\lim_{R\to \infty}\sum_{r\neq f'(a)}O\left( h_r(a_r) \right)  +\lim_{R\to \infty}\sum_{r\neq f'(b)}O\left( h_r(b_r) \right) \label{eq:s3 2}\\
&\qquad + \sum_{f'(a)<r<f'(b)} \left( O(h_r(\alpha_r))+O(h_r(\beta_r))\right) \label{eq:s3 3}\\
&\qquad + \sum_{f'(a)\le r\le f'(b)} \left( O(K_r(a_r,\alpha_r))+O(K_r(\beta_r,b_r))\right) \notag \\
&\qquad + \lim_{R\to \infty}\sum_{r<f'(a)\text{ or }r>f'(b)} O(K_r(a_r,b_r)) \notag
\end{align}
where in each sum it is assumed that $|r| \le R$. 

We start estimating the first sum of \eqref{eq:s3 1} and write 
\begin{align}
&- \lim_{R\to \infty}\sum_{r\neq f'(a)} \frac{g(a_r)e(f(a_r)-ra_r)}{2\pi i (f'(a_r)-r)}\notag\\
&\qquad=\sum_{0< r-f'(a)\le  f''(a)} O\left( \frac{g(a-M(a))}{f'(a-M(a))-r}\right)\label{eq:s31a}\\
&\qquad\qquad +\sum_{0< -(r-f'(a)) \le f''(a)} O\left(\frac{g(a+M(a))}{f'(a+M(a))-r}\right)\label{eq:s31b}\\
&\qquad\qquad - \frac{g(a)e(f(a))}{2\pi i} \lim_{R\to \infty}  \sum_{\substack{|r|\le R \\ |f'(a)-r| > f''(a)}} \frac{e(-ra)}{f'(a)-r}\label{eq:fourier1}
\end{align}
There are at most $m_a$ terms on lines \eqref{eq:s31a} and \eqref{eq:s31b}, each with size
\[
O\left( \frac{U(a)}{f''(a)M(a)} \right) 
\]
by Proposition \ref{prop:f'bound}, for a total contribution of 
\[
 O\left(\frac{U(a)m_a}{f''(a)M(a)} \right)=O(\Delta_2(a)).
\]

We can evaluate the sum on line \eqref{eq:fourier1} explicitly in some cases.  Suppose $f''(a)<1-\|f'(a)\|$.  If $f''(a)<\| f'(a)\|$, then we pull out the term where $f'(a)-r=\langle f'(a)\rangle$, so that the remaining terms on line \eqref{eq:fourier1} form a modified sawtooth function.  In particular, the sum on line \eqref{eq:fourier1} becomes
\[
\begin{cases}
\begin{aligned}&g(a)e(f(a)+\left\llbracket f'(a)\right\rrbracket a)\times\\ &\quad \times \left(-\dfrac{1}{ 2\pi i \langle f'(a) \rangle} + \psi(a,\langle f'(a) \rangle)\right)\end{aligned} & f''(a) < \| f'(a) \|\\
g(a)e(f(a)+\left\llbracket f'(a)\right\rrbracket a)\psi(a,\langle f'(a) \rangle) & \| f'(a) \| \le f''(a)< 1-\|f'(a)\|
\end{cases}
\]
which equals $\mathcal{D}^\circ(a)$, when $f''(a)<1-\|f'(a)\|$.  Otherwise, if $f''(a)\ge 1-\|f'(a)\|$, we may bound the sum on line \eqref{eq:fourier1} using Proposition \ref{prop:fouriertail} by $O(U(a)/f''(a))=O(U(a))$.  Recall that $\mathcal{D}^\circ(a)$ is not explicit when $f''(a)\ge 1-\|f'(a)\|$, and instead is just a big-O approximation with one of its terms of the form $O(U(a))$.

By a similar argument, one can show that all the sums on line \eqref{eq:s3 1} are equal to
\begin{align*}
-\mathcal{D}^\circ(b)+\mathcal{D}^\circ(a)+O(\Delta_2(a)+\Delta_2(b)).
\end{align*}

We break the sum from \eqref{eq:s3 2} into three smaller sums like so
\begin{align}
\lim_{R\to \infty} \sum_{\substack{|r|\le R \\ r\neq f'(a)}} O(h_r(a_r)) &= \lim_{R\to \infty} \sum_{\substack{|r|\le R \\ |f'(a)-r|\ge f''(a)}} O(h_r(a)) \label{eq:s32a}\\
&\qquad + \sum_{0< r-f'(a) < f''(a)} O(h_r(a-M(a)))  \label{eq:s32b}\\
&\qquad + \sum_{0< f'(a)-r < f''(a)} O(h_r(a+M(a)) \label{eq:s32c}.
\end{align}

The sum on the right-hand side of line \eqref{eq:s32a} is
\begin{align}
&\lim_{R\to \infty} \sum_{\substack{|r|\le R \\ |f'(a)-r|\ge f''(a)}} \left| \frac{(f'(x)-r)g'(x)-g(x)f''(x)}{(f'(x)-r)^3} \right|_{x=a} \notag \\
&\qquad \ll \lim_{R\to \infty} \sum_{\substack{|r|\le R \\ |f'(a)-r|\ge f''(a)}}\left( \left| \frac{U(a)}{M(a)(f'(a)-r)^2}\right| + \left| \frac{U(a)f''(a)}{(f'(a)-r)^3}\right| \right) \label{eq:s32a1}
\end{align}
If $\|f'(a)\| =0$ or $m_a\ge 1$, then the smallest $|f'(a)-r|$ could be in this sum is $\min\{1/2,f''(a)\}$; otherwise, we will have a term where $|f'(a)-r|=\|f'(a)\|$.  Thus this sum is bounded by
\begin{align*}
 &\ll \begin{cases}
       \begin{aligned}&\dfrac{U(a)}{M(a)}\min\left\{1,\dfrac{1}{f''(a)}  \right\}\\ &\quad +U(a)\min\left\{f''(a),\dfrac{1}{f''(a)}  \right\}\end{aligned} & \text{if }\|f'(a)\|=0 \text{ or } m_a \ge 1\\
       \dfrac{U(a)}{M(a)\|f'(a)\|^2} + \dfrac{U(a)f''(a)}{\|f'(a)\|^3} & \text{otherwise}
      \end{cases}\\
 &\ll \Delta_2(a).
\end{align*}

By using condition $(M)$ and Proposition \ref{prop:f'bound}, each term on lines \eqref{eq:s32b} and \eqref{eq:s32c} has size at most
\[
 \ll \frac{U(a)}{f''(a)^2M(a)^3},
\]
and there are at most $m_a=O(1+f'(a))$ such terms for a total contribution of at most 
\[
O\left(\frac{U(a)}{f''(a)^2M(a)^3}(1+f''(a))\right)=O(\Delta_2(a)).
\]

By a similar argument both the sums on line \eqref{eq:s3 2} are bounded by $O(\Delta_2(a)+\Delta_2(b))$.

For the first sum on line \eqref{eq:s3 3}, we have $\alpha_r$ equals $a_r$ or $x_r-M(x_r)$.  The sum of terms with $\alpha_r=a_r$ are bounded by the sum in line \eqref{eq:s3 2}, which we just showed has size $O(\Delta_2(a))$, so we only need to estimate the terms where $\alpha_r=x_r-M(x_r)$.  By Proposition \ref{prop:f'bound} and the bounds of condition $(M)$, each such term is bounded by
\[
\ll \frac{g(x_r)}{f''(x_r)^2 M(x_r)^3}.
\]
We may therefore bound the sum over those $r$ with $\alpha_r=x_r-M(x_r)$ by a sum of terms $g(x_r)/f''(x_r)^2M(x_r)^3$ for all $r\in[f'(a),f'(b)]$ and obtain
\[
\sum_{x_r\in[a,b]} \frac{g(x_r)}{f''(x_r)^2 M(x_r)^3} = O(\Delta_4)
\]
by the same argument as in line \eqref{eq:GoverFEinverse}.

Therefore, we have shown that
\begin{align*}
\sideset{}{^*}\sum_{a\le n \le b} g(n) e(f(n)) &= \sideset{}{^*}\sum_{f'(a)\le r\le f'(b)} \frac{g(x_r)e(f(x_r)-rx_r+\frac{1}{8})}{\sqrt{f''(x_r)}}-\mathcal{D}(b)+\mathcal{D}(a)\\
&\qquad +\sum_{i=1}^3 O(\Delta_i(a)+\Delta_i(b)) +O(\Delta_4)\\
&\qquad + \sum_{f'(a)\le r\le f'(b)} \left( O(K_r(a_r,\alpha_r))+O(K_r(\beta_r,b_r))\right)\\
&\qquad + \sum_{r<f'(a)\text{ or }r>f'(b)} O(K_r(a_r,b_r)).
\end{align*}
The proof will therefore be finished when we show that the sum of all the $K_r$ terms is bounded by $O(\Delta_4)$.

%%%%%%%%%%%%%%%%%%%%%
\subsection{Bounding the variation}\label{sec:variation}
%%%%%%%%%%%%%%%%%%%%%

The $K_r$'s are the sum of terms of the size $$h_r(x)=\frac{(f'(x)-r)g'(x)-g(x)f''(x)}{(f'(x)-r)^3}$$ at points $x$ where the derivative with respect to $x$ vanishes.   We may safely ignore points where the derivative does not exist, since all the intervals that give rise to the $K_r$ terms do not contain stationary phase points.  

The derivative of $h_r(x)$ is (using $f'_r$ as shorthand for $f'(x)-r$) 
\[
 h_r'(x)=\frac{-gf'_rf^{(3)}+g''(f'_r)^2+3g(f'')^2-3g'f'_rf''}{(f'_r)^4}=\frac{g''(f'_r)^2-Hf'_r+3g(f'')^2}{(f'_r)^4}.
\]
We set the numerator equal to 0 and solve for $f'_r$.  

First, suppose $g(x)=0$.  If $g''(x)$ also equals $0$, then the numerator is $0$ if and only if $g'(x)$ also equals $0$ ($f''(x)$ is never $0$, and we assumed $f'(x)\neq 0$ at all points in consideration); but if $g(x)$ and $g'(x)$ equal $0$, then $h_r(x)$ also equals $0$, so these points contribute nothing to $K_r$.  Thus the only contribution from points $x$ where $g(x)=0$ come when $g''(x)\neq 0$ and $g'(x)\neq 0$, i.e., those points in $J_{null}$ as defined in Section \ref{section:assumptions}.  At these points, solving for $f'_r$ yields $3g'(x)f''(x)/g''(x)$, so the contribution to the $K_r$ terms from these points is bounded by.
\[
\ll \sum_{x\in J_{null}} \frac{g''(x)^2}{g'(x)f''(x)^2}.
\]

For the remainder of this section, we will suppose then that $g(x) \neq 0$.  There is no way to make the numerator equal $0$ if $g''(x)=H(x)=0$, since both $g(x)$ and $f''(x)$ are assumed to be non-zero, or if $H(x)^2-G(x)<0$, since all the above functions are real-valued; otherwise, the solution is given by
\begin{equation}\label{eq:f'_r}
 f'_r =\begin{cases} \dfrac{H\pm \sqrt{H^2-G}}{2g''}  & x\in J_\pm \\
\dfrac{3g(f'')^2}{H} & x\in J_0\end{cases},
\end{equation}
where $J_\pm$ and $J_0$ are as defined in Section \ref{section:assumptions}.  Plugging this value of $f'_r$ into $h_r$, we obtain
\[
h_r(x)=\begin{cases} 
W_\pm(x) &  x\in J_\pm\\
W_0(x) & x\in J_0\end{cases},
\]
where
\begin{align*}
 W_\pm&=\dfrac{(2g'')^2g'}{(H\pm \sqrt{H^2-G})^2} -\dfrac{(2g'')^3(f''g)}{(H\pm \sqrt{H^2-G})^3}, \qquad \text{and} \\
W_0 &= \dfrac{H^2g'}{(3g(f'')^2)^2}-\dfrac{H^3f''g}{(3g(f'')^2)^3}\\
&=-\dfrac{H^2 f^{(3)}}{27g(f'')^5}.
\end{align*}

If we ignore the constraint that $r$ be an integer for the moment, then we can imagine that \eqref{eq:f'_r} determines functions 
\begin{align*}
r_\pm(x)&=f'-\frac{H\pm \sqrt{H^2-G}}{2g''}  & &\text{on }J_\pm\text{ and} \\
r_0(x)&=f'-\frac{3g(f'')^2}{H} & &\text{on }J_0.
\end{align*}

The contribution of the $K_r$ terms is then at most 
\begin{align*}
&\ll  \sum_{r_0^{-1}(n)\in J_0} W_0(r_0^{-1}(n)) \\
&\qquad  + \sum_{r_+^{-1}(n)\in J_\pm} W_+(r_+^{-1}(n))+\sum_{r_-^{-1}(n)\in J_\pm} W_-(r_-^{-1}(n)).
\end{align*}

We now apply Proposition \ref{prop:inversesum} to see that the contribution of the $K_r$ terms is bounded by 
\begin{align*}
&\ll \mathcal{K}(J_0,W_0,r_0) + \mathcal{K}(J_\pm,W_+,r_+) + \mathcal{K}(J_\pm,W_-,r_-)\\
&\ll O(\Delta_4).
\end{align*}

This completes the proof of Theorem \ref{thm:main}.

%%%%%%%%%%%%%%%%%%%%%%%%%%%%%%%%%%%%
\section{Proof of Theorem \ref{thm:alternate4}}
%%%%%%%%%%%%%%%%%%%%%%%%%%%%%%%%%%%%

We follow the proof of Theorem \ref{thm:main}, until Section \ref{section:remainingintegrals}.

As in the conditions of Theorem \ref{thm:alternate4}, suppose that 
\[
M(x) \ge \max\{b-x,x-a\}
\] 
for all $x \in [a,b]$ and that $m_a=m_b=0$.  Then we may use Proposition \ref{prop:intbypartsvar} in place of Proposition \ref{prop:intbyparts} to the integrals in $S_3$ and obtain
\begin{align}
S_3&=- \lim_{R\to \infty}\sum_{r\neq f'(a)} \frac{g(a)e(f(a)-ra)}{2\pi i (f'(a)-a)}+\lim_{R\to \infty} \sum_{r\neq f'(b)} \frac{g(b)e(f(b)-rb)}{2\pi i (f'(b)-r)} \label{eq:s3var 1} \\
&\qquad +\lim_{R\to \infty}\sum_{r\neq f'(a)}O\left( \frac{U(a)f''(a)}{|f'(a)-r|^3}+\frac{U(a)}{M(a)(f'(a)-r)^2} \right) \label{eq:s3var 2} \\
&\qquad  +\lim_{R\to \infty}\sum_{r\neq f'(b)}O\left( \frac{U(b)f''(b)}{|f'(b)-r|^3}+\frac{U(b)}{M(b)(f'(b)-r)^2} \right)  \label{eq:s3var 3}
\end{align}
We used $a$ and $b$ in place of $a_r$ and $b_r$ due to $m_a$ and $m_b$ being zero.  Likewise, we have no $\alpha_r$, $\beta_r$ terms since by our assumption on $M(x)$, $\alpha_r=a$ and $\beta_r=b$ for all $r$.

Just as in the proof of Theorem \ref{thm:main}, the terms in line \eqref{eq:s3var 1} equal
\[
-\mathcal{D}(b)+\mathcal{D}(a)+O(\Delta_2(a)+\Delta_2(b)).
\]
The terms on lines \eqref{eq:s3var 2} and \eqref{eq:s3var 3} we bound as we did the terms in line \eqref{eq:s32a1}, obtaining
\begin{align*}
&\ll \frac{U(a)}{M(a)\|f'(a)\|^{*2}} + \frac{U(a)f''(a)}{\|f'(a)\|^{*3}}\\
&\qquad + \frac{U(b)}{M(b)\|f'(b)\|^{*2}} + \frac{U(b)f''(b)}{\|f'(b)\|^{*3}}\\
&\ll \Delta_2(a)+\Delta_2(b) .
\end{align*}

Since we have no $K_r$ terms, we can let $\Delta_4$ just equal
\[
 \int_a^b \frac{U(x)}{f''(x)M(x)^3} \left(1+\sqrt{f''(x)}M(x)\right) \left( 1+ \frac{1+|M'(x)|}{f''(x)M(x)}\right) \ dx,
\]
and this completes the proof of Theorem \ref{thm:alternate4} in this case.

%%%%%%%%%%%%%%%%%%%%%%%%%%%%%%%%%%%%
\section{Proof of Theorem \ref{thm:toinfinity}}
%%%%%%%%%%%%%%%%%%%%%%%%%%%%%%%%%%%%

The proof of this theorem mostly entails following the proof of Theorem \ref{thm:main} and simply being more careful with how certain error terms arise.

In Section \ref{section:prooffirststep}, we want to show that we can replace $O(\mathcal{D}^\circ_{f''\ge 1}(a))+O(\Delta_1(a))$ with some constant $c_1$.  But these terms simply serve as a bound for the sum
\[
- \sum_{r=-\infty}^\infty \int_{a_r}^a g(x)e(f(x)-rx) \ dx,
\]
which, for fixed $M(a)$, is a constant as $b$ tends to $\infty$.  (If $b$ is near $a$, we may need to take $M(a)$ to be smaller, thus it's not always a constant.)  Likewise, the term $O(\Delta_1(a)+\Delta_2(a))$ that appear in Section \ref{section:proofhalf} arise from bounding the term $\mathcal{E}(\beta_r)$ from Proposition \ref{prop:main1} applied with the $c$ and $b$ of the statement of the proposition equal to $a$ and $\beta_r$, respectively.  Provided $b$ is large enough, $\beta_r$ will be constant, so the term $\mathcal{E}(\beta_r)$ may be likewise replaced by a constant $c_2$.  

The error terms from Section \ref{section:prooffull} arise from bounding the sum
\begin{equation}\label{eq:toinfinity1}
\sum_{a< x_r < b} \left( \mathcal{E}_r(\alpha_r) + \mathcal{E}_r(\beta_r) \right)
\end{equation}
where $\mathcal{E}_r(\alpha_r)$ (respectively, $\mathcal{E}_r(\beta_r)$) is the error from Proposition \ref{prop:main1} applied with $a$ (respectively, $b$) and $c$ in the statement of the proposition equal to $\alpha_r$ (respectively, $\beta_r$) and $x_r$.

We would like to replace the $b$ in line \eqref{eq:toinfinity1} with $\infty$.  Consider first the $\mathcal{E}_r(\alpha_r)$ terms.  Each such term is fixed (provided $b$ is sufficiently large enough that $b-a >M(a)$).  Thus we write
\[
\sum_{a<x_r < b} \mathcal{E}_r(\alpha_r) = \left( \sum_{a< x_r < b'} -\sum_{b\le x_r < b'} \right) \mathcal{E}_r(\alpha_r). 
\]
Suppose now $b'$ is chosen to be on a sequence tending to infinity such that $\Delta'_3(b')$ tends to $0$.  Then we have that
\[
\lim_{b'\to \infty}  \sum_{a< x_r < b'} \mathcal{E}_r(\alpha_r) 
\]
exists and is a constant $c_3$ by the assumed convergence of the integral in $\Delta_3(a)$, and we have that 
\begin{align*}
\sum_{b\le x_r < b'} \mathcal{E}_r(\alpha_r) &\ll \int_b^{b'} \frac{U(x)}{f''(x)(x-a)^3}\left(1+\frac{1}{f''(x)M(x)}+\frac{1}{f''(x)(x-a)} \right) \ dx\\
&\qquad + \int_b^{b'} \frac{U(x)}{f''(x)M(x)^3}\left(1+\sqrt{f''(x)}M(x)\right)\left(1+\frac{1+|M'(x)|}{f''(x)M(x)}\right) \ dx\\
& \qquad + \Delta'_3(b) + \Delta'_3(b')\\
&\ll \Delta'_3(b)+\Delta'_3(b')+\Delta_5
\end{align*}
by Proposition \ref{prop:inversesum}.  Thus we have
\[
\sum_{a<x_r < b} \mathcal{E}_r(\alpha_r) = c_3 + O(\Delta'_3(b)) +O(\Delta_5).
\]

We must apply an additional idea to perform the same technique to the terms $\mathcal{E}_r(\beta_r)$.  In particular, if $x_r+M(x_r)>b_r$, then $\beta_r$ changes as $b$ grows.  We may replace all such terms by $\mathcal{E}_r(x_r)$ with an error of size $O(\Delta_4'(b))$.  Following the argument of the previous paragraph, we have
\[
\sum_{a<x_r < b} \mathcal{E}_r(\beta_r) = c_3 + O(\Delta'_4(b)) +O(\Delta_5).
\]

In Section \ref{section:remainingintegrals}, the first sum on line \eqref{eq:s3 1} is constant for $b$ sufficiently large, and the second sum is just $\mathcal{D}^\circ(b)+O(\Delta_2(b))$.

By following the details of Proposition \ref{prop:intbyparts} that we applied to $S_3$, we see that what we are left with at this point is 
\begin{align}
&\sum_{f'(a)\le r\le f'(b)} \left( \int_{a_r}^{\alpha_r}+\int_{\beta_r}^{b_r}  \right) \frac{ h_r(x)}{2\pi i} e(f(x)-rx) \ dx \label{eq:toinfinity2}\\
&\qquad + \sum_{r<f'(a) \text{ or }r>f'(b)} \int_{a_r}^{b_r} \frac{h_r(x)}{2\pi i} e(f(x))-rx) \ dx \label{eq:toinfinity3}
\end{align}
where in the latter sum we assume we are taking the limit as $R$ tends to infinity of the sum with the additional restriction that $|r|\le R$.  Let us, for ease of notation, rewrite the integrals as 
\[
\int_{[a_r,b_r]\setminus I_{x_r}} ,
\]
where as in condition $(M)$ the set $I_{x_r}$ equals $[x-M(x_r),x+M(x_r)]$.  We wish to replace $b$ by a sequence of $b'$'s tending to infinity, such that $\Delta_2(b')$ tends to $0$.  For such $b'$ we have that above sum equals
\begin{align*}
&\lim_{R\to \infty}\sum_{|r|\le R} \int_{[a_r,b'_r]\setminus I_{x_r}}\frac{ h_r(x)}{2\pi i} e(f(x)-rx) \ dx \\
&\qquad -\lim_{R\to \infty}\sum_{|r|\le R} \int_{[b_r,b'_r]\setminus I_{x_r}}\frac{ h_r(x)}{2\pi i} e(f(x)-rx) \ dx.
\end{align*}
We now finish applying Proposition \ref{prop:intbyparts} by taking integration by parts of all the integrals and then bounding the variation.  This combined with the ideas from earlier in the proof give us that the terms on \eqref{eq:toinfinity2} and \eqref{eq:toinfinity3} are bounded by
\[
c_4+O(\Delta_2(b)) + O(\Delta_5).
\]

This completes the proof of Theorem \ref{thm:toinfinity}.

\section{Proof of Theorem \ref{thm:refinement}}
%%%%%%%%%%%%%%%%%%%%%%%%%%%%%%%%%%%%

In the estimates of Theorem \ref{thm:main}, we only roughly estimated the size of $-\mathcal{D}(b)+\mathcal{D}(a)$, which roughly are the first-order endpoint contributions at $b$ and $a$, when $f''$ was large.  In this case, we included error terms of the size $O(U(a))+O(U(b))$, which are at least the size of the first and last term of our initial sum.  These terms can be improved, but require additional work.  

But recall, at the beginning of the proof of Theorem \ref{thm:main}, we bounded a sum of the type\footnote{Since $a_r=a$ for almost all $r$, the sum is in fact finite.}
\begin{equation}\label{eq:shiftedintegrala}
\sum_{r=-\infty}^\infty \int_{a_r}^a g(x) e(f(x)-r x) \ dx
\end{equation}
in line \eqref{eq:shiftedintegral}, which along with the sum in line \eqref{eq:fourier1} contributed to the $O(U(a))$ error in $\mathcal{D}(a)$.  In bounding this sum, we exploited the fact that the first-order endpoint contributions at $a$ should cancel significantly (for example, in our estimation of line \eqref{eq:S1 1}).  We expect, however, that more should be true, that not only should the first-order endpoint contributions display cancellations, but the integrals themselves should also display cancellation near $a$.

As in the conditions of Theorem \ref{thm:refinement}, assume that $M(a),f''(a)\ge 1$, let $C$ and $L$ be real numbers  satisfying 
\[
f''(a)^{-1/2}\ll C< M(a)\qquad \text{and} \qquad \sqrt{f''(a)} \ll L< f''(a)\cdot\min\{1,C\},
\]
and let $\epsilon:=\langle a\rangle$ and $\epsilon':=\langle f'(a)\rangle$.  We now define a value $a_r'$ similarly to how we defined $a_r$ on line \eqref{eq:a_rdef}.
\[
a_r':= \begin{cases} 
a+C & \text{if }1\le f'(a)-\epsilon'-r \le L\\
a-C &  \text{if } -1 \ge f'(a)-\epsilon'-r \ge -L\\
a & \text{otherwise} \\
 \end{cases} 
\]
If $a_r'\neq a$ then $a_r'$ is between $a$ and $a_r$, except, possibly, for two values of $r$ if $L$ is very close to $f''(a)$.

We now break the original sum \eqref{eq:shiftedintegrala} into several pieces:
\[
\sum_{1\le |f'(a)-\epsilon'-r|\le L} \int_{a_r'}^a 
+ \sum_{1\le|f'(a)-\epsilon'-r|\le L} \int_{a_r}^{a_r'} 
+ \sum_{|f'(a)-\epsilon'-r|\ge L} \int_{a_r}^a
+O(\Delta_1(a))
\]
where the last term comes from the term $r=f'(a)-\epsilon'$  as in \eqref{eq:smallrbound}.  In the sequel, we shall refer to these three groups of integrals as Type I, Type II, and Type III integrals, respectively.

We will postpone fully estimating the Type I integrals for now, and instead modify them into a more symmetric form.  We first replace $g(x)$ by 
\[
g(a)+O\left(\max_{z\in [a_r',a]} g'(z) \cdot |x-a|\right)
\] and $f(x)-rx$ by
\begin{align*}
&(f(a)-ra )+(f'(a)-r)(x-a)+\frac{1}{2}f''(a)(x-a)^2\\
&\qquad+O\left(\frac{\max_{z\in[a_r',a]} f^{(3)}(z)}{3!} |x-a|^3  \right).
\end{align*}
Doing so gives
{\allowdisplaybreaks \begin{align*}
&\int_{a_r'}^a g(x)e(f(x)-rx) \ dx\\
&\qquad=g(a) \int_{a_r'}^a e(f(x)-rx) \ dx + O\left( \frac{U(a)}{M(a)}\int_{a_r'}^a |x-a| \ dx\right)\\
&\qquad= g(a) \int_{a_r'}^a e(f(x)-rx) \ dx +O\left(\frac{U(a)C^2}{M(a)}  \right)\\
&\qquad= g(a)\int_{a_r'}^a e\left((f(a)-ra)+(f'(a)-r)(x-a)+\frac{1}{2}f''(a)(x-a)^2\right)\times \\
&\qquad\qquad \qquad \times \left(1+O\left(\frac{f''(a)}{M(a)}|x-a|^3\right)\right)\ dx\\
&\qquad\qquad +O\left(\frac{U(a)C^2}{M(a)}  \right)\\
&\qquad= g(a)\int_{a_r'}^a e\left((f(a)-ra)+(f'(a)-r)(x-a)+\frac{1}{2}f''(a)(x-a)^2\right) \ dx\\
&\qquad\qquad + O\left( \frac{U(a)f''(a)C^4}{M(a)}\right).
\end{align*}}
Shifting the bounds of integration and sending $f'(a)-r$ to $r+ \epsilon'$, the Type I integrals can be rewritten as
\begin{align*}
&\sum_{1\le |f'(a)-\epsilon'-r|\le L} \int_{a_r'}^a  g(x)e(f(x)-rx) \ dx\\
&\qquad =  g(a)e(f(a)+(\epsilon'-f'(a))a) \sum_{1\le |r|\le L} \int_{a_r'-a}^0e\left( \frac{f''(a)}{2}x^2+rx+ra+\epsilon' x\right) \ dx\\
&\qquad \qquad + O\left( \frac{U(a)f''(a)C^4L}{M(a)}\right).
\end{align*}

The Type II and Type III integrals we evaluate using Proposition \ref{prop:intbypartsvar}.  This gives us
\begin{align}
&\left(\sum_{1\le|f'(a)-\epsilon'-r|\le L} \int_{a_r}^{a_r'} + \sum_{|f'(a)-\epsilon'-r|\ge L} \int_{a_r}^a\right) g(x)e(f(x)-rx)\ dx\notag \\
&\qquad=\sum_{|f'(a)-\epsilon' -r|<L} \frac{g(a_r')}{2\pi i (f'(a_r')-r)}e(f(a_r')-ra_r') \label{eq:typeIIandIII1}\\
&\qquad\qquad + \sum_{\substack{L\le |f'(a)-\epsilon' -r|\\ |f'(a)-r|\le f''(a)}} \frac{g(a)}{2\pi i (f'(a)-r)}e(f(a)-ra) \notag\\
&\qquad\qquad+ O\left(  \sum_{|f'(a)-\epsilon' -r|\le L} \left(\frac{U(a)}{M(a)(f'(a_r')-r)^2}+\frac{U(a)f''(a)}{|f'(a_r')-r|^3} \right)\right)\label{eq:typeIIandIII4}\\
&\qquad\qquad+ O\left( \sum_{\substack{L\le |f'(a)-\epsilon' -r|\\ |f'(a)-r|\le f''(a)}} \left( \frac{U(a)}{M(a)f''(a)}+\frac{U(a)}{M(a)^3f''(a)^2} \right)\right)\label{eq:typeIIandIII2} \\
&\qquad\qquad+ O\left( \sum_{L\le |f'(a)-\epsilon' -r|} \left(\frac{U(a)}{M(a)(f'(a)-r)^2}+\frac{U(a)f''(a)}{|f'(a)-r|^3} \right)\right) \label{eq:typeIIandIII3}. 
\end{align}
Similar to Proposition \ref{prop:f'bound}, we have that 
\[
f'(a\pm c) = f'(a) \pm f''(a)\cdot C \left( 1+O\left(\frac{\eta \cdot C}{2 \cdot M(a)}\right)\right)
\]
with implicit constant $1$, so that $|f'(a_r')-r| \gg f''(a) C$ for $1\le |f'(a) - \epsilon' - r| \le L$.  Thus, the sum in line \eqref{eq:typeIIandIII1} is bounded by
\[
\sum_{|f'(a)-\epsilon' -r|<L} O\left(  \frac{U(a)}{f''(a)C}  \right) =O\left(\frac{U(a)L}{f''(a)C} \right),
\]
and the sum in line \eqref{eq:typeIIandIII4} is bounded by
\begin{align*}
& O\left( \sum_{1\le r} \frac{U(a)}{M(a)(f''(a)C+r)^2} + \frac{U(a)f''(a)}{(f''(a)C+r)^3} \right)\\
 &\qquad= O\left( \frac{U(a)}{M(a)f''(a)C} + \frac{U(a)}{f''(a)C^2}\right)\\
&\qquad =O\left(\frac{U(a)}{f''(a)C^2} \right),
\end{align*}
since $c \le M(a)$.  The terms on line \eqref{eq:typeIIandIII2} are bounded by 
\[
O\left(\frac{U(a)}{M(a)}+\frac{U(a)}{f''(a)M(a)^3 } \right)=O\left(\frac{U(a)}{M(a)}\right).
\]
Finally, we apply Euler-Maclaurin summation to the sum on line \eqref{eq:typeIIandIII3} to bound it by 
\[
O\left( \frac{U(a)}{M(a)L}+\frac{U(a)f''(a)}{L^2}\right)=O\left(\frac{U(a)f''(a)}{L^2} \right),
\]
since $L$ is at most $f''(a)$.

Using all of these estimates for the integrals on line \eqref{eq:shiftedintegrala} (with the negative sign now, as they appeared in the proof of Theorem \ref{thm:main}) and adding the terms from line \eqref{eq:fourier1}, we obtain the following.
\begin{align}
&-\sum_{r=-\infty}^\infty \int_{a_r}^a g(x) e(f(x)-r x) \ dx - \lim_{R\to \infty} \sum_{\substack{|r|\le R \\ |f'(a)-r|>f''(a)}} \frac{g(a)e(f(a)-ra)}{2\pi i(f'(a)-ra)}\notag\\
&\qquad = -g(a)e(f(a)+(\epsilon'-f'(a))a) \times \notag \\
&\qquad \qquad \qquad \times\sum_{1\le |r|\le L} \int_{c\cdot \operatorname{sgn}(r)}^0e\left( \frac{f''(a)}{2}x^2+rx+ra+\epsilon' x\right) \ dx \notag\\
&\qquad \qquad -\lim_{R\to\infty} \sum_{\substack{|r|\le R\\ L\le |f'(a)-\epsilon' -r|}} \frac{g(a)}{2\pi i (f'(a)-r)}e(f(a)-ra) \label{eq:fourier2}\\
&\qquad \qquad +O\left( \frac{U(a)f''(a)C^4L}{M(a)}+ \frac{U(a)L}{f''(a)c}+ \frac{U(a)f''(a)}{L^2}  +  \frac{U(a)}{f''(a)C^2}+\frac{U(a)}{M(a)}\right)\label{eq:newdeltaerror1}
\end{align}

Now we estimate the Type I integrals and the sum on line \eqref{eq:fourier2} in three separate cases.

%%%%%%%%%%%
\subsection{Case 1: $a$ is an integer.}
%%%%%%%%%%%

In this case, we begin by adding the $r$ and $-r$ terms from the Type I integrals together, obtaining 
\begin{align}
&-g(a)e(f(a)+(\epsilon'-f'(a))a) \sum_{1\le |r|\le L} \int_{C\cdot \operatorname{sgn}(r)}^0e\left( \frac{f''(a)}{2}x^2+rx+ra+\epsilon' x\right) \ dx \notag \\
&\qquad =- g(a) e(f(a)) \sum_{1\le r \le L}\int_C^0 e\left( \frac{f''(a)}{2}x^2+rx  \right)\cdot 2 i \sin(2\pi \epsilon' x) \ dx. \label{eq:case1}
\end{align}

We estimate each integral in \eqref{eq:case1} by an application of integration by parts.
\begin{align}
&\int_C^0 e\left( \frac{f''(a)}{2}x^2+rx  \right)\cdot 2 i \sin(2\pi \epsilon x) \ dx\notag \\
&\qquad = \left. \frac{\sin(2\pi \epsilon' x)}{\pi(f''(a)x+r)} e\left( \frac{f''(a)}{2}x^2+rx  \right)  \right]_C^0 \label{eq:case1 1} \\
&\qquad \qquad + \int_0^C   \frac{2\epsilon' \cos(2\pi \epsilon' x)}{f''(a)x+r}  e\left( \frac{f''(a)}{2}x^2+rx  \right) \ dx \label{eq:case1 2}\\
&\qquad \qquad - \int_0^C  \frac{f''(a) \sin(2\pi \epsilon' x)}{\pi(f''(a)x+r)^2}  e\left( \frac{f''(a)}{2}x^2+rx  \right) \ dx.\label{eq:case1 3}
\end{align}

The term
\[
\frac{\sin(2\pi \epsilon' x)}{\pi(f''(a)x+r)}
\]
on line \eqref{eq:case1 1} is $0$ when $x=0$ and bounded by $O(\epsilon'/f''(a))$ if $x=C$.  

We will then apply the first and second derivative tests (Lemmas \ref{lem:1stderivtest} and \ref{lem:2ndderivtest}) to the integral on line \eqref{eq:case1 2}.  The total variation may be bounded by
\begin{align*}
\int_0^C \left| \left( \frac{2\epsilon' \cos(2 \pi \epsilon' x)}{f''(a)x+r}\right)'\right| \ dx &=O\left( \int_0^C \left|\frac{\epsilon'^2 \sin(2\pi \epsilon' x)}{f''(a)x+r} \right|\ dx \right)\\
&\qquad +O\left( \int_0^C \left| \frac{\epsilon' f''(a)\cos(2\pi \epsilon' x)}{(f''(a)x+r)^2} \right| \ dx \right)\\
&=O\left( \frac{\epsilon'^2}{r} \int_0^C|\sin(2\pi \epsilon' x)| \ dx\right)\\
&\qquad + O\left( |\epsilon'| \int_0^C \frac{f''(a)}{(f''(a)x+r)^2} \ dx\right)\\
&=O\left( \frac{|\epsilon'|(1+|\epsilon'|C)}{r}\right) + O\left( \frac{|\epsilon'|}{r}\right).
\end{align*}
Since $|\cos(2\pi \epsilon x)|\le 1$ and $f''(a)x+r$ is at least $r$ for $x\in[0,C]$, the maximum modulus on the interval is at most $|\epsilon'|/r$.  Therefore the integral on line \eqref{eq:case1 2} is bounded by
\[
O\left(  \frac{|\epsilon'|(1+|\epsilon'| C)}{r} \min\left\{\frac{1}{\sqrt{f''(a)}}, \frac{1}{r}  \right\} \right).
\]

We will also apply the first and second derivative tests to the integral on line \eqref{eq:case1 3}.  Using $\sin(2\pi \epsilon' x) = 2\pi \epsilon' x +O(\epsilon^3 x^3)$, we have that 
\begin{align*}
&\int_0^C   \frac{f''(a) \sin(2\pi \epsilon' x)}{\pi(f''(a)x+r)^2}e\left( \frac{f''(a)}{2}x^2+rx  \right) \ dx\\
&\qquad =\int_0^C \frac{2\epsilon ' f''(a) x}{(f''(a)x+r)^2} e\left( \frac{f''(a)}{2}x^2+rx  \right) \ dx + O\left( \int_0^C \frac{f''(a)\epsilon'^3 x^3}{r^2} \ dx\right)\\
&\qquad =O\left( \frac{|\epsilon'|}{r}  \min\left\{\frac{1}{\sqrt{f''(a)}}, \frac{1}{r}  \right\} \right) +O\left( \frac{f''(a)|\epsilon'|^3 C^4}{r^2} \right).
\end{align*}
Here we used that the function $ax/(ax+b)^2$ is $0$ at $x=0$, increases monotonically to a maximum at $x=b/a$, and then decreases mononotonically to $0$ as $x$ tends to infinity.

Thus the Type I integrals \eqref{eq:case1} are bounded by
\begin{align}
&  g(a)\sum_{1\le r \le L} O\left(\frac{|\epsilon'|}{f''(a)}\right)\notag \\
&\qquad+ g(a)\sum_{1\le r \le L}O\left( \frac{|\epsilon'|(1+|\epsilon'| C)}{r}  \min\left\{\frac{1}{\sqrt{f''(a)}}, \frac{1}{r}  \right\} \right) \notag\\
&\qquad  +g(a)\sum_{1\le r \le L} O\left( \frac{f''(a)|\epsilon'|^3 C^4}{r^2} \right)\notag \\
& = O\left( \frac{U(a)|\epsilon'| L}{f''(a)}  \right)+O\left( \frac{U(a)|\epsilon'|(1+|\epsilon'|C)\log(1+f''(a))}{\sqrt{f''(a)}}  \right) \notag \\
&\qquad+ O\left( U(a)f''(a)|\epsilon'|^3C^4  \right)\notag,
\end{align}
which gives the bound on $\mathcal{D}_0(a)$ in this case.

Also, we bound the sum in line \eqref{eq:fourier2} by
\[
O\left( \frac{U(a)}{L}   \right) =O\left( \frac{U(a)f''(a)}{L^2} \right),
\]
using Proposition \ref{prop:fouriertail}.

%%%%%%%%%
\subsection{Case 2: $f'(a)$ is an integer, $a$ is close to an integer.}
%%%%%%%%%

As in case 1, we sum $r$ and $-r$ terms and use the first derivative test, obtaining
\begin{align*}
&g(a)e(f(a)+(\epsilon'-f'(a))a) \times\\
&\qquad \qquad \times\sum_{1\le |r|\le L} \int_{C\cdot \operatorname{sgn}(r)}^0e\left( \frac{f''(a)}{2}x^2+rx+ra+\epsilon' x\right) \ dx \\
&\qquad = g(a) e(f(a)-f'(a)a) \sum_{1\le r \le L} 2 i \sin(2\pi \epsilon r)\int_C^0 e\left( \frac{f''(a)}{2}x^2+rx  \right) \ dx\\
&\qquad =O\left( U(a) \sum_{1\le r \le L} |\epsilon| r \cdot \frac{1}{r} \right) \\
&\qquad =O\left( U(a) |\epsilon| L\right).
\end{align*}

By the same argument, we may complete the Fourier series in \eqref{eq:fourier2}.
\begin{align*}
 &-\lim_{R\to\infty} \sum_{\substack{|r|\le R \\ L\le |f'(a)-r|}} \frac{g(a)}{2\pi i (f'(a)-r)}e(f(a)-ra)\\
&\qquad = -\frac{g(a)e(f(a)-f'(a)a)}{2\pi i}\lim_{R\to \infty} \sum_{|r|\le R} \frac{e(\epsilon r)}{r}\\
&\qquad \qquad +O\left(U(a) \sum_{ r<L} \frac{sin(2 \pi \epsilon r)}{r}\right) \\
&\qquad = \psi(a)g(a)e(f(a)-f'(a)a) + O\left(U(a)|\epsilon| L  \right).
\end{align*}

Together these give $\mathcal{D}_0(a)$ in this case.

%%%%%%%%%%%%%%%%%%%%%%
\subsection{Case 3: $f'(a)$ is an integer, $a$ is far from an integer.}
%%%%%%%%%%%%%%%%%%%%%%

In particular, by saying $a$ is far from an integer, we want $|\epsilon|>C$.  This implicitly requires that $C< 1/2$.

In this case, we sum over all positive values of $r$ seperately from all negative values of $r$.  Ignoring the constant multiplier 
\[
 g(a)e(f(a)+(\epsilon'-f'(a))a)
\]
for the moment, the sum over positive $r$ values give
\begin{align}
& \sum_{1\le r\le L} \int_C^0e\left( \frac{f''(a)}{2}x^2+rx+ra+\epsilon' x\right) \ dx \notag \\
&\qquad= \int_C^0 e\left( \frac{f''(a)}{2}x^2 \right) \sum_{1\le r \le L} e(r(x+a)) \ dx\notag \\
&\qquad=    \int_C^0 e\left( \frac{f''(a)}{2}x^2+\frac{1}{2}x\right)  \frac{i}{2\sin(\pi (x+a))}\ dx \label{eq:case3error1}\\
&\qquad \qquad  -  \int_C^0 e\left( \frac{f''(a)}{2}x^2+\left(L+\frac{1}{2}\right)x\right)  \frac{i}{2\sin(\pi (x+a))}\ dx. \label{eq:case3error2}
\end{align}
Here we will apply the second derivative test to the integrals in lines \eqref{eq:case3error1}--\eqref{eq:case3error2}.

The variation plus maximum modulus in both integrals is bounded by
\[
 O\left(\frac{1}{(|\epsilon|-C)}\right),
\]
so that both integrals (as well as the corresponding integrals for the sum over negative $r$) are bounded by
\[
 O\left(\frac{1}{(|\epsilon|-C) \sqrt{f''(a)}} \right)
\]

In this case, again, we bound the terms in line \eqref{eq:fourier2} by $O(g(a)/L|\epsilon|)$.  But since $\sqrt{f''(a)}\ll L$, this term is dominated by
\[
 O\left( \frac{1}{(|\epsilon|-C)\sqrt{f''(a)}} \right).
\]

Thus we have $\mathcal{D}_0(a)$ in this final case.

%%%%%%%%%%%%%%%%%%%%%%%%%%
\subsection{Combined case analysis}
%%%%%%%%%%%%%%%%%%%%%%%%%%

When $\epsilon'=0$ and $M(a) \le f''(a)^7$, the following choices of $L$ and $C$ optimize the error terms from line \eqref{eq:newdeltaerror1}:
\[
L=f''(a)^{8/15}M(a)^{1/15} \qquad \text{and} \qquad C=f''(a)^{-2/5}M(a)^{1/5},
\]
when $|\epsilon|\le f''(a)^{-3/5}M(a)^{-1/5}$ or $f''(a)^{-2/5} M(a)^{1/5} \le |\epsilon|$;
\[
L=f''(a)^{1/3}|\epsilon|^{-1/3} \qquad \text{and}\qquad C=f''(a)^{-1}|\epsilon|^{-1},
\]
when $ f''(a)^{-3/5}M(a)^{-1/5}\le |\epsilon| \le f''(a)^{-1/2} $; and,
\[
L=f''(a)^{2/3}|\epsilon|^{1/3} \qquad \text{and} \qquad C=\epsilon/2,
\]
when $ f''(a)^{-1/2} \le |\epsilon| \le f''(a)^{-2/5} M(a)^{1/5}$.

Inserting these values into the error terms for cases 2 and 3 gives the result at the end of Theorem \ref{thm:refinement}.

These methods work similarly at $b$ in place of $a$.

%%%%%%%%%%%%%%%%%%%%%%%%%%
\section{Proof of Corollary \ref{thm:exampleerror}}
%%%%%%%%%%%%%%%%%%%%%%%%%%%

Recall that we are attempting to bound the error in the following van der Corput transform.
\begin{equation}\label{eq:exampleerror2}
 \sideset{}{^*}\sum_{n \le N} e\left( \left(\frac{n}{3}\right)^{3/2}\right) = \sideset{}{^*}\sum_{(1/12)^{1/2} \le r \le (N/12)^{1/2}} \sqrt{24 r}\cdot e(-4r^3+1/8) + \Delta
\end{equation}

We can apply the main theorems of this paper to either the sum on the left or the right of \eqref{eq:exampleerror2}.  However, we obtain best results if we use both at different times, dependent on how big $\| (N/12)^{1/2} \|$ is.

%%%%%%%%%%%%%%%%%%%%%%%
\subsection{Case 1: $\| (N/12)^{1/2} \| > (12N)^{-1/4}$ or $\|(N/12)^{1/2}\|=0$}
%%%%%%%%%%%%%%%%%%%%%%%

In this case, we apply Theorem \ref{thm:toinfinity} to the left-hand side of \eqref{eq:exampleerror2}.  Thus
\[
 a=1,\qquad b=N, \qquad f(x) = \left(\frac{x}{3} \right)^{3/2}, \qquad g(x)=1,
\]
\[
 G(x)=0, \qquad H(x) = f^{(3)}(x) = -\frac{1}{8\sqrt{3}}x^{-3/2}, \qquad W_0(x) = \frac{2}{9x^2}, \qquad r_0(x)=2\sqrt{\frac{x}{3}}
\]
and $r_0'(x)$ is non-zero.  For $C_2$, $C_{2^-}$, $C_4$, $D_0$, $D_1$, and $D_2$ equal to $2$ and $\delta$ equal to $1/2$, we have that condition $(M)$ is satisfied for $M(x)=\epsilon \cdot x$ for some small fixed $\epsilon >0$ (independent of $N$) and $U(x)=1$.  Hence $H^2-G >0$ on $J=[a(1-\epsilon),b(1+\epsilon)]$, so $J_0$ is the full interval $J$, and both $J_\pm$ and $J_{null}$ are empty.

We first need to check that the conditions of Theorem \ref{thm:toinfinity} hold.  

Using our assumption that $\| (N/12)^{1/2} \| > (12N)^{-1/4}$, we can estimate $\Delta_2(b)$ by
\begin{align*}
 \Delta_2(b) &\ll N^{-2}(1+N^{3/4})(1+N^{-1/2}) + N^{-1/2} \\
&\qquad +\begin{cases}
\dfrac{1}{N}+\dfrac{1}{N^{1/2}} & \|(N/12)^{1/2}\|=0\\
 \dfrac{1}{N\|(N/12)^{1/2}\|^2}+ \dfrac{1}{N^{1/2} \| (N/12)^{1/2}\|^{3} } & \| (N/12)^{1/2} \| > (12N)^{-1/4}
\end{cases}\\
&\ll N^{-1/2} \| (N/12)^{1/2}\|^{*-3}.
\end{align*}
Thus $\Delta_2(b)$ tends to $0$ as $N=12k^2$ and $k$ tends to infinity along the positive integers.

We can bound $\Delta'_3(b)$ by 
\begin{align*}
\Delta'_3(b) &\ll \frac{1}{N^2}+\frac{1}{N^2}(1+N^{-1/2})\\
&\ll N^{-3/2},
\end{align*}
which clearly tends to $0$ as $N$ tends to infinity.

The set $K_b$ equals $[N(1+\epsilon)^{-1},N]$.  Let $\overline{N}$ be the largest integer in $[1,N-1/2]$ for which $(\overline{N}/12)^{1/2}$ is also an integer.  Then the first integral and sum of $\Delta'_4(b)$ are bounded by
\begin{align*}
 &\int_{[N(1+\epsilon)^{-1},\overline{N}]}  \frac{x^{1/2}}{(N-x)^3}\left( 1+ x^{-1/2} + \frac{x^{1/2}}{N-x}\right) \ dx\\
&\qquad + \frac{\overline{N}}{(N-\overline{N})^3} + \frac{N(1+\epsilon)^{-1}}{(N-N(1+\epsilon)^{-1})^3}\\
&\ll \frac{\overline{N}^{1/2}}{(N-\overline{N})^2} + \frac{\overline{N}}{(N-\overline{N})^3}.
\end{align*}
However, by the mean value theorem, we have, for some $\eta\in[\overline{N},N]$,
\begin{align*}
N-\overline{N} &= \left( \left( \frac{N}{12}\right)^{1/2} - \left( \frac{\overline{N}}{12} \right)^{1/2} \right) \cdot 24 \left( \frac{\eta}{12} \right)^{1/2}\\
&\gg\| (N/12)^{1/2} \|^* \cdot \overline{N}^{1/2},
\end{align*}
and $\overline{N}\asymp N$, so that the first integral and sum of $\Delta'_4(b)$ is bounded by
\[
\ll \frac{1}{N^{1/2} \|(N/12)^{1/2}\|^{*2}} + \frac{1}{N^{1/2} \|(N/12)^{1/2} \|^{*3}} \ll \frac{1}{N^{1/2} \|(N/12) ^{1/2}\|^{*3}}.
\]
Likewise the final integral and sum of $\Delta'_4(b)$ are bounded by
\begin{align*}
 &\ll\int_{N(1+\epsilon)^{-1}}^N x^{-5/2}(1+x^{3/4})\left(1+ \frac{1}{x^{1/2}}\right) \ dx\\
&\qquad + N^{-2}(1+N^{3/4})\\
&\ll N^{-3/4}.
\end{align*}
Thus $\Delta'_4(b) $ is bounded by
\[
 \ll N^{-1/2} \|(N/12) ^{1/2}\|^{*-3}
\]
and converges to $0$ on the sequence $N=12\cdot k^2$, $k=1,2,\dots$.

Finally, we need to show that the integrals and sums in $\Delta_3(a)$, $\Delta_4$, and $\mathcal{K}$ converge.  The integrand of the integral in $\Delta_3(a)$ is 
\[
 \frac{x}{(x-1)^3}\left( 1+ \frac{1}{x^{1/2}}+ \frac{x^{1/2}}{x-1}\right),
\]
which converges like $x^{-2}$.  The integrand of the integral in $\Delta_4$ is 
\[
 x^{-5/2}(1+x^{3/4})\left(1+ \frac{1}{x^{1/2}}\right),
\]
which converges like $x^{-7/4}$.  The integrand of the integral in $\mathcal{K}(J_0,W_0,r_0)$ is $x^{-5/2}$.  The sum in the $\mathcal{K}$ term is bounded by $|W_0(1-\epsilon)|+|W_0(N(1+\epsilon))|$, since $J_0$ is a single interval and $r'_0$ never changes sign.  Since $W_0(N(1+\epsilon))$ converges like $N^{-2}$, this shows that the conditions of Theorem \ref{thm:toinfinity} are satisfied.  It also shows that $\Delta_5$ is bounded by $O(N^{-3/2})$.

To apply the theorem we calculate $\mathcal{D}(b)$ explicitly by
\[
 \mathcal{D}(b) = \begin{cases}
O(N^{-1/2}) & \|(N/12)^{1/2}\|=0\\
\begin{aligned}&e\left(\left(N/3\right)^{3/2}\right)\times \\ &\quad \times\left(-\dfrac{1}{2\pi i\langle (N/12)^{1/2}\rangle} + \psi(N,\langle (N/12)^{1/2}\rangle ) \right) 
\end{aligned} & \| (N/12)^{1/2} \| > (12N)^{-1/4}
\end{cases}.
\]

If $\|(N/12)^{1/2}\|=0$, then $\Delta_1(b) = N^{-2}$; and otherwise, we get that $\Delta_1(b)=0$, since $(12N)^{-1/4}$ is greater than $f''(N)=(N/3)^{-1/2}/12$ for $N\ge 1$, and so $m_N$ equals $0$.

Combining the error terms together we see that
\[
\Delta=O(N^{-1/2} \|(N/12)^{1/2}\|^{*-3}),
\]
which completes the proof of the corollary in this case.

\subsection{Case 2: $0\neq \| (N/12)^{1/2} \|\le  (12N)^{-1/4}$}
%%%%%%%%%%%%%%%%%%%%%%%

Now we apply Theorem \ref{thm:refinement} to $e(-1/8)$ times the conjugate of the right-hand side of \eqref{eq:exampleerror2}.  We use the conjugate to guarantee that $f''(x)$ is positive.

We have
\[
a=\sqrt{1/12}, \qquad b=\sqrt{N/12}, \qquad f(x) = 4x^3, \qquad g(x) = \sqrt{24 x},
\]
\[
G(x) =-41472x, \qquad H(x) = 120\sqrt{6x}, \qquad H(x)^2-G(x) = 127872 x.
\]
Since $G$ is never $0$ on $[a,b]$ and $H^2-G$ is always positive on $[a,b]$ we have that $J_0$ is empty and $J_\pm=[a,b]$.
So, to complete our useful definitions, we have
\[
r_\pm(x) = 12(11\pm 2\sqrt{37})x^2, \qquad W_\pm(x) = \frac{\sqrt{37}\pm 7 }{96\sqrt{6}(\sqrt{37}\pm 5)^3}x^{-9/2}
\]
and again $r_\pm'(x)$ never equals $0$ on $[a,b]$.

Again, for $C_2$, $C_{2^-}$, $C_4$, $D_0$, $D_1$, and $D_2$ equal to $2$ and $\delta$ equal to $1/2$, we have that condition $(M)$ is satisfied for $M(x)=\epsilon \cdot x$ for some small fixed $\epsilon >0$ independent of $N$ and $U(x)=g(x)=\sqrt{24 x}$.

Since $a$ is fixed, we have
\[
\mathcal{D}(a)+\Delta_1(a)+\Delta_2(a) =O(1).
\]

By Theorem \ref{thm:refinement}, we have that $\mathcal{D}(b)$ equals 
\begin{align*}
&-\psi(b)g(b)e(f(b)-f'(b)b)+ O\left(\frac{U(b)}{M(b)^{2/15}f''(b)^{1/15}}+\frac{U(b)}{M(b)}\right)\\
&\qquad \qquad + O\left(U(b)f''(b)^{1/3}\| b\|^{2/3} \right)\\
&\qquad = -2\psi(\sqrt{N/12})(3N)^{1/4}    e(-(N/3)^{3/2})+O(N^{3/20})\\
&\qquad \qquad +O\left(N^{5/12}\| (N/12)^{1/2}\|^{2/3}\right).
\end{align*}

By our assumption that $N$ is an integer, $\|f'(b)\|=0$ and so
\[
\Delta_1(b) \ll b^{-7/2} \ll 1.
\]

For $\Delta_2(b)$ note that in this case $f'(b)$ is always an integer, and hence $\| f'(b) \|$ equals $0$.  Therefore, we have
\begin{align*}
\Delta_2(b) &\ll b^{-9/2}(1+b^{3/2})(1+b)+b^{-3/2} +b^{-1/2}+ b^{-1/2} \\
&\ll N^{-1/4}
\end{align*}

For $\Delta_3(a)+\Delta_3(b)$ we may assume $\overline{a}=a+1/2$ and $\overline{b}=b-1/2$ (at worst, this assumption only makes the $\Delta_3$ terms larger).  Thus,
\begin{align*}
\Delta_3(a) &=O\left( \int_{a+1/2}^b \frac{1}{x^{1/2}(x-1/2)^3}\left(1+x^{-2}+\frac{1}{x(x-1/2)}\right) \ dx\right)\\
&\qquad + O(1)+O(b^{-9/2})\\
&=O(1)
\end{align*}
and
\begin{align*}
\Delta_3(b) &=O\left( \int_a^{b-1/2} \frac{1}{x^{1/2}(b-x)^3}\left(1+x^{-2}+\frac{1}{x(b-x)}\right) \ dx\right)\\
&\qquad + O(b^{-3/2}) + O(b^{-3})\\
&= O\left(  \int_0^{b-1/2} \frac{1}{(b-x)^3}+\frac{1}{(b-x)^4} \ dx \right) +O(b^{-3/2})\\
&= O(1).
\end{align*}

To estimate $\Delta_4$, note that by our above definitions we have
\[
|W_-(x)||r_-'(x)|+|W_-'(x)|+|W_+(x)||r_+'(x)|+|W_+'(x)|=O(x^{-7/2}),
\]
and so,
\begin{align*}
\Delta_4 &= O\left( \int_a^b x^{-7/2}(1+x^{3/2})(1+x^{-2}) \ dx \right) +O\left( \int_{J_\pm} x^{-7/2} \ dx\right)\\
&\qquad + |W_-(a)|+|W_-(b)|+|W_+(a)|+|W_+(b)|\\
&=O(1)+O(1)+O(1)+O(b^{-9/2})+O(1)+O(b^{-9/2})\\
&=O(1).
\end{align*}

Thus we have shown that, in this case
\begin{align*}
\Delta&=2\psi(\sqrt{N/12})(3N)^{1/4}    e\left(\left(\frac{N}{3}\right)^{3/2}+\frac{1}{8}\right)+O(N^{3/20})\\
&\qquad  +O\left(N^{5/12}\| (N/12)^{1/2}\|^{2/3}\right),
\end{align*}
by undoing our conjugation, multiplying by $e(1/8)$, and moving this term to the correct side of the equation.

%%%%%%%%%%%%%%%%%%%%%
\section{Improving the results when $\|f'(\mu)\|$ is small}\label{section:fresnel}
%%%%%%%%%%%%%%%%%%%%%%

In all the main results and the various examples given in this paper, the results tend to be at their best when $\|f'(\mu)\|$ (for $\mu$ equal to $a$ or $b$) is large compared with $\sqrt{f''(\mu)}$.  However, when $\|f'(\mu)\|$ is on the order of $\sqrt{f''(\mu)}$, the estimates tend to be dominated by an inexplicit term, and even when $\|f'(\mu)\|$ is much smaller than $\sqrt{f''(\mu)}$, we still do not have great asymptotics, such as in Corollary \ref{thm:exampleerror}.  

Could the error term $\Delta$ in this range be made more explicit?  If one's notion of explicit is a closed form that can be estimated moderately fast on a computer, then yes; if one's notion of explicit requires good asymptotic data, then the answer so far appears to be no.

Suppose that $f''(x)$ is generally smaller than $1$ on $[a,b]$.  The largest contributions to $\Delta$ come from bounding an integral of the form
\[
 \int_{x_r}^\mu g(x)e(f(x)-rx) \ dx
\]
where $|x_r-\mu|\ll \sqrt{f''(\mu)}$.  We will assume for a moment that $x_r$ is in the interval $[a,b]$.

In the proof of Theorem \ref{thm:main}, we bounded this integral with two techniques: first, possibly shifting $\mu$ to $\mu_r=\mu\pm M(\mu)$ if $|x_r-\mu|$ was smaller than $f''(\mu)$, and second, applying Propositions \ref{prop:main1} and \ref{prop:main2}, our variants of Redouaby and Sargos' results.

However, another result of Redouaby and Sargos provides an alternate bound for such an integral.  Under the conditions of Lemma \ref{lem:redsar1} and the additional assumption that $c-a \ll N,M$ to simplify the error terms, we have the following result (combining Lemma 3 and Lemma 9 in \cite{redsar}) for $a<c$:
\begin{align*}
 \int_a^c g(x) e(f(x)) \ dx &= \frac{g(c)}{\sqrt{f''(c)}}e(f(c))\mathcal{F}(\sqrt{f''(c)}\phi(a-c))+O\left(\frac{U(c-a)^2}{N}\right)\\
&\qquad +O\left(\frac{UM}{T}\left(1+\frac{M}{N}\right)^2\left(T^{-1/2}+(c-a)  \right)+\frac{U(c-a)}{T}\right),
\end{align*}
where $\alpha=f''(c)/2$,
\[
 \phi(t) = \operatorname{sgn}(t)\left(\frac{f(c+t)-f(c)}{\alpha}\right)^{1/2},
\]
and $\mathcal{F}(u)$ denotes the modified Fresnel integral function\footnote{More standard forms of the Fresnel integral functions would include $x^2/4$ or $x^2/2\pi$ where we have used $x^2/2$ and would further break the function up into its real and imaginary parts.},
\[
\mathcal{F}(u) = \int_0^u e(x^2/2) \ dx.
\]
An analogous result holds for the integral from $c$ to $b$.

Let us assume once again that $M=N$.  Just as in Propositions \ref{prop:main1} and \ref{prop:main2}, we see an error term of the form $O(UM/T^{3/2})$.  This new estimate on stationary phase integrals is useful---in the sense that the remaining error terms are of order of magnitude smaller than the explicit term---provided $M\gg 1$, $T$ is of order much larger than $1$, and $c-a$ is of order much smaller than 
\[
\min\left\{ \frac{M^{1/2}}{f''(c)^{1/4}},M\sqrt{f''(c)} \right\}.
\]
This in turn is useful to the van der Corput transform if the good range for $c-a$ includes $c-a=f''(c)^{-1/2}$, and this will be true provided $f''(c)$ is of an order of magnitude much greater than $M^{-1}$.  

To apply these new estimates, we need to make a few further small changes in the proof of Theorem \ref{thm:main}.  First, for this $r$, we do not change $\mu$ to $\mu_r$, removing the term $O(U(\mu)/\sqrt{f''(\mu)})$ from $\Delta_1(\mu)$.  Second, since we only apply the above estimates to an integral on one side of $x_r$, we obtain terms of size $U(\mu)/f''(\mu)M(\mu)=O(\Delta_2(\mu))$ from the terms now left uncancelled when we apply Propositions \ref{prop:main1} and \ref{prop:main2} to the other side.  Finally, since we do not have any integral from $\mu$ to $\mu_r$ in $S_3$, all the associated terms no longer need to be bounded.  If $\mu=a$, then this means we lack the term
\[
\frac{g(a_r)e(f(a_r)-ra_r)}{2\pi i(f'(a_r)-a)}
\]
as well as $O(h_r(a_r))$ and $O(h_r(\alpha_r))$.  This thus removes the term with $\langle f'(\mu)\rangle$ from $\mathcal{D}(\mu)$, and reduces the last pair of terms in $\Delta_2(\mu)$ to 
\[
\frac{U(\mu)}{M(\mu)}+ U(\mu)f''(\mu).
\]

When $x_r$ is not in the interval $[a,b]$, say $x_r>b$, then we need to be slightly more careful.  We will need to assume that condition $(M)$ holds on the larger interval $[a,x_r]$, and then consider the difference of integrals
\[
\left(  \int_{x_r-M(x_r)}^{x_r} - \int_b^{x_r} \right) g(x) e(f(x)) \ dx.
\]
We would apply Propositions \ref{prop:main1} and \ref{prop:main2} to the first integral and the above estimates to the second.

While this may seem like a great improvement over earlier theorems---and in a sense, it is---it also is very hard to apply.  The region $\|f'(\mu)\| \asymp \sqrt{f''(\mu)}$ requires us to understand the function $\mathcal{F}(u)$ in the region $u\asymp 1$.  Unfortunately, while there are very good asymptotics for $\mathcal{F}(u)$ when $u$ tends to $0$ or infinity, there are no good asymptotics when $u \asymp 1$.

\section{Further questions}
%%%%%%%%%%%%%%%%%%%%%%%%

In Section \ref{sec:variation}, we do not make use of the fact that the intervals taken as arguments in the $K_r$ not only fail to include the point $x_r$ but also are excluded from an interval around $x_r$.  Is there a simple way to take this into account and does doing so improve the results in a substantial way? 	                                                                                                                                                                                                                                                                                                                                                                                                                                                                                                  

Can the results of this paper be explained in terms of the geometry of curves, in the same way as \cite{coutkaz2}?

Dekking and Mend{\`e}s-France conjecture in \cite{dmf} that $\Gamma(n\log n)$, the curve related to the exponential sum
\[
\sum_{1\le n \le x} e(n\log n),
\] is resolvable.  Can the results of this paper be used to prove this?

\section{Acknowledgements}

The author acknowledges support from National Science Foundation grant DMS 08-38434 ``EMSW21-MCTP: Research Experience for Graduate Students,'' under which the investigations into the van der Corput transform began.

\end{document}